\newcommand{\Q}{{\mathbb{Q}}}
\newcommand{\Z}{{\mathbb{Z}}}
\newcommand{\F}{{\mathbb{F}}}
\newcommand{\ds}{\displaystyle}
\newcommand{\sst}{\scriptstyle}
\newcommand{\wt}{\widetilde}
\newcommand{\ov}{\overline}
\newcommand{\ft}{\footnotesize}
\newcommand{\ns}{\normalsize}
\newcommand{\BN}{{\bf N}}
\newcommand{\BJ}{{\bf J}}
\newcommand{\BH}{{\bf H}}
\newcommand{\BNu}{\hbox{\Large $\nu$}}
\newcommand{\CH}{{\mathcal H}}
\newcommand{\CR}{{\mathcal R}}
\newcommand{\CW}{{\mathcal W}}
\newcommand{\CU}{{\mathcal U}}
\newcommand{\CO}{{\mathcal O}}
\newcommand{\CT}{{\mathcal T}}
\newcommand{\CI}{{\mathcal I}}
\newcommand{\GI}{{\mathfrak I}}
\newcommand{\CM}{{\mathcal M}}
\newcommand{\CN}{{\mathcal N}}
\newcommand{\order}{\raise0.8pt \hbox{${\scriptstyle \#}$}}
\newcommand{\lien}{\mathrel{\mkern-4mu}}
\newcommand{\too}{\relbar\lien\rightarrow}
\newcommand{\tooo}{\relbar\lien\relbar\lien\too}
\newcommand{\toooo}{\relbar\lien\relbar\lien\tooo}
\newcommand{\plus}{\ds\mathop{\raise 0.5pt \hbox{$\bigoplus$}}\limits}
\newcommand{\prd}{\ds\mathop{\raise 1.0pt \hbox{$\prod$}}\limits}
\newcommand{\sm}{\displaystyle\mathop{\raise 2.0pt \hbox{$\sum$}}\limits}
\newcommand{\ffrac}[2]{\hbox{\ft $\displaystyle\frac{#1}{#2}$}}
\newcommand{\Gal}{{\rm Gal}}
\newcommand{\Lbda}{{\bf \Lambda}}
\newcommand{\rk}{{\rm rk}_3}
\newcommand{\rkp}{{\rm rk}_p}
\newcommand{\tor}{{\rm tor}}
\newcommand{\Ker}{{\rm Ker}}
\newcommand{\pr}{{\rm pr}}
\newcommand{\ta}{{\rm ta}}
\newcommand{\ram}{{\rm ram}}
\newcommand{\prim}{{\rm prim}}
\newcommand{\ab}{{\rm ab}}
\newcommand{\nr}{{\rm nr}}
\newcommand{\bp}{{\rm bp}}
\newcommand{\cyc}{{\rm c}}
\newcommand{\acyc}{{\rm ac}}
\newcommand{\capitul}{{\hbox{\tiny${\rm cap}$}}}
\newcommand{\Ccl}{c\hskip-0.1pt{\ell}}
\newcommand{\Log}{{\rm Log}}
\newcommand{\Art}{{\rm Art}}
\newcommand{\Artmap}{{\rm Artmap}}
\newtheorem{theorem}{Theorem}[section]
\newtheorem{lemma}[theorem]{Lemma}
\newtheorem{corollary}[theorem]{Corollary}
\newtheorem{proposition}[theorem]{Proposition}
\newtheorem{definition}[theorem]{Definition}
\newtheorem{example}[theorem]{Example}
\newtheorem{remark}[theorem]{Remark}
\newtheorem{remarks}[theorem]{Remarks}
\newtheorem{algorithm}[theorem]{Algorithm}
\numberwithin{equation}{section}
\title[Initial layer of the anti-cyclotomic $\Z_p$-extension]
{Initial layer of the anti-cyclotomic $\Z_p$-extension \\
of $\Q(\sqrt{-m})$ and capitulation phenomenon}
\author[Georges Gras]{Georges Gras}
\address{Villa la Gardette, 4 chemin de Ch\^ateau Gagni\`ere, 
F-38520 Le Bourg d'Oisans}
\email{g.mn.gras@wanadoo.fr}
\urladdr{\url{http://orcid.org/0000-0002-1318-4414}}
\keywords{Anti-cyclotomic $\Z_p$-extensions, imaginary quadratic fields,
class field theory, capitulation of $p$-classes}
\subjclass{11R29, 11R18, 11R37, 12Y05}
\begin{document}

\date{September 2, 2025}

\begin{abstract} 
Let $k=\Q(\sqrt{-m})$ be an imaginary quadratic field. We consider 
the properties of capitulation of the $p$-class group of $k$ in the 
anti-cyclotomic $\Z_p$-extension $k^\acyc$ of $k$; for this, using 
a new approach based on the $\Log_p$-function (Theorems 
\ref{artingroup}, \ref{Val}), we determine the first layer 
$k_1^\acyc$ of $k^\acyc$ over $k$, and we show that some partial 
capitulation may exist in $k_1^\acyc$, even when $k^\acyc/k$ is totally 
ramified. We have conjectured that this phenomenon of capitulation is 
specific of the $\Z_p$-extensions of $k$, distinct from the cyclotomic one. 
For $p=3$, we characterize a sub-family of fields $k$ (Normal Split cases) 
for which $k^\acyc$ is not linearly disjoint from the Hilbert class field 
(Theorem \ref{disjunction}). No assumptions are made on the splitting of 
$3$ in $k$ and in $k^*=\Q(\sqrt{3m})$, nor on the structures 
of their $3$-class groups. Four {\sc pari/gp} programs (\ref{program1}, 
\ref{program2}, \ref{program3}, \ref{program4} depending on the
classification of Definition \ref{fourcases}) are given, computing a 
defining cubic polynomial of $k_1^\acyc$, and the main invariants 
attached to the fields $k$, $k^*$, $k_1^\acyc$; some relations 
with Iwasawa's invariants are discussed (Theorem \ref{lambdamu}).
\end{abstract}

\maketitle

\section{Initial layer of the \texorpdfstring{$\Z_p$}{Lg}-extension 
\texorpdfstring{$k^\acyc$}{Lg} of \texorpdfstring{$k=\Q(\sqrt {-m})$}{Lg}}

\subsection{Introduction}
In several papers \cite{HW2010, HW2018, HBW2019, KO2004, 
KW2023, KW2024a}, Hubbard--Washington, Hubbard--Br\"oker--Washington, 
Kim--Oh, Kundu--Washin\-gton, perform large studies of the problem of determining, 
for $p \geq 3$, the layers of $\Z_p$-extensions of imaginary fields, especially for 
the layers of the anti-cyclotomic $\Z_p$-extension $k^\acyc$ of an imaginary 
quadratic field $k$, by means of various approaches including 
complex multiplication and reflection principle of Kummer theory.
In the imaginary quadratic case, $k$ admits two $\Z_p$-extension,
Galois over $\Q$, the cyclotomic one $k^\cyc = k \Q^\cyc$, and the
pro-dihedral one $k^\acyc$; we have $k^\cyc \cap k^\acyc = \Q$,
where $k^\cyc/k$ is totally ramified but where $k^\acyc$ may be 
non linearly disjoint from the $p$-Hilbert class field $H_k^\nr$ .

\smallskip
Such methods and then many others come after the results by 
Carroll--Kisilevsky \cite{CK1976} and Brink \cite{Br2007}, in which one finds 
probably the first use of the torsion group $\CT_k$ of the Galois group of 
the maximal abelian $p$-ramified pro-$p$-extension $H_k^\pr$ of $k$,
whose order of magnitude and the $p$-rank play a fundamental role in these 
questions; so, $\CT_k$ fixes the compositum $\wt k$ of the $\Z_p$-extensions
of $k$. Recall that $H_k^\pr$ contains (or is equal to) the Bertrandias--Payan 
field $H_k^\bp$ (maximal abelian pro-$p$-extension of $k$ in which any 
cyclic extension of $k$ is embeddable in cyclic $p$-extensions of $k$ of arbitrary 
degree; see \cite{BP1972} about the characterization of this field). Then
$\CW_k^\bp := \Gal(H_k^\pr/H_k^\bp) \simeq \oplus_{v \mid p} 
\mu_p(k_v)/\mu_p(k)$. For $p \ne 3$, $\CW_k^\bp = 1$; for $p=3$, 
$\CW_k^\bp$ is of order $3$ if and only if $m \equiv 3 \pmod 9$, $m \ne 3$.
Put $\CT_k^\bp \simeq \CT_k/\CW_k^\bp$.

\smallskip
We will see that $H_k^\bp$ is the compositum $\wt k H_k^\nr$ of $\wt k$
with the $p$-Hilbert class field of $k$.

\smallskip
In the mentioned papers, explicit results are given for small primes $p\geq 3$ 
for various purposes, as statistics, heuristics, or partial results on the Iwasawa 
invariants of $k^\acyc$. For $p=2$, see Carrol \cite{Car1975}, and \cite{Gra1983}.
Then many authors have undertook the study of the torsion group $\CT_k$ in a 
cohomological point of view, like Nguyen Quang Do in \cite{NQD1986}, and find
again class field theory approach in a more conceptual way, but less accessible 
for computing in experimental number theory.
See for instance \cite{MR2019, Mai2011}, and their bibliographies,
for generalizations to the $\Sigma$-ramification context, where $\Sigma$ is a 
suitable set of places, which constitutes a tool for the study of pro-$p$-extensions 
and the approach of many conjectures as that of Fontaine--Mazur.
In \cite{Fu2013}, numerous results give bounds for the Iwasawa invariants 
as improvements of some Sands ones \cite{San1993}.

\smallskip
To our knowledge, there is no published programs allowing the systematic 
computation of a cubic polynomial defining $k_1^\acyc$ over any imaginary 
quadratic field $k$; however, see Brink \cite[Theorem 2]{Br2007} used in
\cite[\S\,7.3]{KW2023}.

\smallskip
Our purpose is to give a new method (with {\sc pari/gp} programs 
\cite{Pari2019}) using the $\Log_p$-function introduced in our 
Crelle's papers (1982/83) to compute the first layer $k_1^\acyc$ of 
$k^\acyc$, then to study the phenomenon of capitulation of  
$p$-classes of $k$ in $k^\acyc$, in direction of some heuristics 
about Iwasawa's invariants of $k^\acyc$; the main reason is 
to obtain effective results from the base field and not results requiring 
$n \gg 0$ as done in many theoretical papers.

\smallskip
This is motivated by a result of Jaulent \cite{Jau2019}, about
some consequences of the property of capitulation, clamming that,
{\it in the totally real case}, Greenberg's conjecture \cite{Gree1976}
holds if and only if {\it the logarithmic class group} of $k$ \cite{Jau1994, 
Jau2016b, Jau2019, Jau2024b} capitulates in the cyclotomic 
$\Z_p$-extension $k^\cyc$ of $k$, and recall that we have conjectured 
that, for an imaginary quadratic field, 
$k^\acyc$ as well as the non-Galois $\Z_p$-extensions of $k$, 
behave in some sense as $k^\cyc$ in the real case (see \cite{FK2002} 
for some examples in this direction about $\lambda, \mu, \nu$ invariants). 

\smallskip
The initial criteria in \cite{Gree1976} were only about $p$-class
groups instead of logarithmic class groups, but we know there
are sufficient similarities between the two notions to justify the
study of class groups capitulations (see \cite[Section 2]{Gra2024b}
for more details). Many numerical tests of Greenberg's conjecture, 
for real quadratic fields, are based on cyclotomic units and analytic 
formulas (e.g., \cite{Pag2022, MPS2025}).
 
\smallskip
An important fact is when $k^\acyc$ is not linearly disjoint from 
the $p$-Hilbert class field $H_k^\nr$; indeed, in the unramified cyclic 
extension $k^\acyc \cap H_k^\nr/k$ some $p$-classes of $k$ may 
capitulate since they capitulate in $H_k^\nr$, but this is going in the 
right direction (see the limit case of Theorem \ref{CHcyclic} leading 
to the conclusion $\lambda = \mu = \nu = 0$), even if capitulation
in the ramified case (disjunction) is more significant.

\smallskip
Recall that $\rkp(A)$ (the $p$-rank of $A$) means $\dim_{\F_p}^{}(A/A^p)$, 
for $\Z_p$-modules of finite type. The following elementary result will 
be crucial in various programing contexts:

\begin{lemma}\label{eta}
Let $\CH$ be a finite abelian $p$-group, and let $\CH_0 = \langle h_0 \rangle$ be a 
cyclic subgroup of $\CH$. Then $\ov \CH_0 := \CH_0 \cdot \CH^p/\CH^p$ 
is a {\it non-trivial} direct factor of $\ov \CH := \CH/\CH^p$ if and only if $h_0 
\notin \CH^p$. Thus, $h_0 \in \CH^p$ is equivalent to $\rkp(\CH) = \rkp(\CH/\CH_0)$.

If $h_0$ is of order $p$, $\CH_0 = \langle h_0 \rangle$ is a direct factor of 
$\CH$ if and only if $h_0 \notin \CH^p$. 
\end{lemma}

\begin{proof}
It suffices to consider the following exact sequence of $\F_p$-vector spaces:
$$1 \too \CH_0 \cdot \CH^p/\CH^p \simeq \CH_0 / \CH_0 \cap \CH^p
\too \CH/\CH^p \too \CH/\CH_0 \cdot \CH^p \too 1, $$
the rank relation $\rkp(\CH)-\rkp(\CH/\CH_0) \in \{0,1\}$ characterizing 
the two possibilities.

\smallskip
Assume $h_0$ of order $p$ and set $\CH = \langle h_1,h_2, \ldots , h_n \rangle$.
Put $h_0 = \prod_i h_i^{a_i}$; if $h_0 \notin \CH^p$, we may assume for instance 
$a_1=1$, so that $\CH=\langle h_0h_2^{-a_2} \cdots h_n^{-a_n}, h_2, \ldots , h_n 
\rangle = \langle h_0,h_2, \ldots , h_n \rangle$, of order $\order \langle h_0 \rangle 
\times \order \langle h_2, \ldots , h_n \rangle = \order\langle h_1 \rangle \times
\order \langle h_2, \ldots , h_n \rangle$ since by definition, $\CH$ is the direct sum 
of the $\langle h_i \rangle$'s and since $h_0 \notin \langle h_2, \ldots , h_n \rangle$. 
Whence $\order \langle h_0 \rangle = \order \langle h_1 \rangle = p$ and the claim,
the reciprocal being obvious.
\end{proof}

By abuse of language, we will say, in the description of the outputs of
the programs, that  $\CH_0$ is a direct factor in $\CH$ (modulo $\CH^p$
being implied) if and only if $h_0 \notin \CH^p$, hence if and only if 
$\rkp(\CH) = \rkp(\CH/\CH_0)+1$ (use $\CH \simeq \Z/p\Z \times \Z/p^3\Z$,
$h_0 = h_1\, h_2^p$, to see that $\CH = \langle h_0 \rangle \cdot \langle h_2 \rangle$
is not direct).

\subsection{Main statements}
Let's give an overview of the various statements applying to $k^\acyc$ and 
its first layer $k_1^\acyc$, for $k = \Q(\sqrt{-m})$; these statements take 
place in ``minus parts'' (for the action of complex conjugation; see \S\,\ref{minus}), as 
$\Gal(k^\acyc/k)$, $\Gal(H_k^\pr/k^\cyc)$, $\Gal(H_k^\nr/k) \simeq \CH_k$, the
$p$-class group of $k$ (the ``plus parts'' being related to $k^\cyc/k$). 

\smallskip
See more generally Section \ref{characters} for the use of the 
characters of $M = k(\mu_p^{})$ in Kummer duality that we briefly recall 
for $p=3$:
let $\chi$ be the character of $k$ and let $\omega_3$ be that of $\Q(\mu_3^{})$; 
let $k^*=\Q(\sqrt{3m})$ (or $\Q(\sqrt{m/3})$), of character $\chi^* = 
\omega_3 \chi^{-1}$, be the reflection of the field $k^*$ in
the mirror involution, $\CH_{k^*}$ 
its $3$-class group, and $W_{\chi^*}$ the ``Kummer radical''
(of character $\chi^*$) for the $3$-ramification over $k$, that is to say, 
the sub-module of $k^{* \times}/k^{* \times 3}$ associated to the 
$\chi$-component of the maximal $3$-elementary $3$-ramified extension 
$H_{k,1}^\pr$ of $k$, giving $M \big(\sqrt[3]{W_{\chi^*}} \big)$ by 
composition with $M$. 

\medskip\noindent
{\bf Result A} (Theorem \ref{degreecap}, $p \geq 3$).
Let $K$ be a $\Z_p$-extension of $k = \Q(\sqrt{-m})$.
Let $\Ccl({\mathfrak a}) \in \CH_k$ of order $p^e$, $e \geq 1$, and let 
$\alpha_0^{} \in k^\times$ be the generator of ${\mathfrak a}^{p^e}$.
If $\Ccl({\mathfrak a})$ capitulates in the layer $K_n$, then $n \geq e$, and
capitulation holds if and only if there exists $\varepsilon^\capitul_n \in E_n$,
the group of units of $K_n$, such that $\alpha_0^{} \cdot \varepsilon^\capitul_n 
\in (K_n^\times)^{p^e}$; if so, $\varepsilon^\capitul_n \not\in E_n^p$ and
is unique modulo $E_n^{p^e}$.                                                                                                                                                                                                                                                                                                                    

\medskip\noindent
{\bf Result B} (Artin group of $k_1^\acyc$, Theorem \ref{artingroup}, 
$p \geq 3$). Let $I_k$ (resp. $P_k$) be the group of prime-to-$p$ 
ideals (resp. principal ideals) of $k$. Let ${\mathfrak a} 
\in I_k \otimes \Z_p$; for the imaginary quadratic field $k$ (for which 
$\log_p(E_k)=0$) and its $p$-completions $k_{\mathfrak p}$, 
${\mathfrak p} \mid p$, we have defined (see \eqref{calcul} for 
the general definition):
$$\Log_p({\mathfrak a}) := \ffrac{1}{p^e}\log_p(\alpha) = \ffrac{1}{p^e} 
\big(\log_{\mathfrak p}(\alpha) \big)_{{\mathfrak p} \mid p} \in 
\hbox{$\prod_{{\mathfrak p} \mid p}$}\, k_{\mathfrak p}, $$ 
as soon as ${\mathfrak a}^{p^e} =: (\alpha)$, $\alpha \in k^\times \otimes \Z_p$. 
Let $\wt k$ be the compositum of the $\Z_p$-extensions of $k$; class 
field theory claims the existence of a canonical isomorphism of the 
form $\Gal(\wt k/k) \simeq \Log_p(I_k \otimes \Z_p)$. When $k=\Q(\sqrt{-m})$, 
$\wt k = k^\cyc \cdot k^\acyc$, then  $\Gal(k^\acyc/k) 
\simeq \Log_p(I_k \otimes \Z_p)^- := \frac{1-\tau}{2} \cdot 
\Log_p(I_k \otimes \Z_p)$.

\smallskip
Let $H_k^\pr$ be the maximal abelian $p$-ramified pro-$p$-extension of $k$ 
and let $H_{k,1}^\pr$, of Artin group $\Art(\hbox{\ft$H_{k,1}^\pr$}) \subseteq 
I_k \otimes \Z_p$, be the maximal elementary $p$-sub-extension of $H_k^\pr$.
Let ${\mathfrak b}^{}_1, \ldots , {\mathfrak b}_{t_k} \in I_k \otimes \Z_p$, be 
such that $\langle\, {\mathfrak b}^{}_1, \ldots , {\mathfrak b}_{t_k} \,\rangle 
\cdot \Art(\hbox{\ft$H_{k,1}^\pr$}) = \big\{ {\mathfrak a} \in I_k \otimes \Z_p,\ 
\Log_p({\mathfrak a}) \in p \,\Log_p(I_k \otimes \Z_p)\big \}$. 
Then, $\Art(k_1^\acyc)$ is the minus part of the left member;
so $\Gal(k^\acyc/k_1^\acyc) \simeq p\,\Log_p (I_k \otimes \Z_p)^-$. 

\medskip\noindent
{\bf Result C} (Theorem \ref{Val}, $p \geq 3$).
(i) We have, where $H_k^\nr$ is the $p$-Hilbert class field and
$\CT_k^\bp$ the Bertrandias--Payan module isomorphic to 
$\CT_k/\CW_k^\bp$, where $\CW_k^\bp 
\simeq \bigoplus_{v \mid p} \mu_p(k_v) \big/ \mu_p(k)$:
$$\big[\Log_p(I_k \otimes \Z_p)^- : \Log_p(P_k \otimes \Z_p)^- \big] =
\big[k^\acyc \cap H_k^\nr : k \big] = \frac{\order \CH_k}
{\order \CT_k^\bp}. $$

(ii) Let $\hbox{\ft{\sc exp}}(k)$ be the exponent of the $p$-class group 
$\CH_k$, $\hbox{\ft{\sc expta}}(k)$ be the exponent of the tame part of
$\BH_k$, and let ${\rm h}(k) := \order \BH_k$. 
Let ${\mathfrak q}$ be a prime ideal dividing a prime $q$ split in~$k$.
Let $\beta$ be the generator of the principal ideal 
${\mathfrak q}^{\hbox{\tiny{\sc exp}}(k) \cdot \hbox{\tiny{\sc expta}}(k)}$.
Set $\log_p(\beta) = C_0 + C_1 \sqrt{-m} \in  \Z_p \oplus \Z_p \sqrt{-m}$, hence
$\log_p(\beta)^- = C_1 \sqrt{-m}$.
Then the condition $\Log_p({\mathfrak q})^- \in p\,
\Log_p(I_k \otimes \Z_p)^-$ holds as soon as, taking the 
$p$-adic valuation ${\bf v}_p$, normalized by ${\bf v}_p(p)=1$:
$${\bf v}_p^{}(C_1) \geq \hbox{\sc Val} := {\bf v}_p^{}(\hbox{\ft{\sc exp}}(k))
+{\bf v}_p^{}(\order \CT_k) - {\bf v}_p^{}({\rm h}(k))+2-{\bf v}_p^{}(m). $$

\noindent
{\bf Result D} (Theorem \ref{DF}, $p=3$).
Let $k = \Q(\sqrt{-m})$ with $m \equiv 3 \pmod 9$, let $\CW_k^\bp$ (of 
order~$3$) be the subgroup of $\CT_k$ fixing the Bertrandias--Payan field 
$H_k^\bp$. Let's consider two cases:

\smallskip
(i) $k^\acyc$ disjoint from $H_k^\nr$ over $k$ (i.e., $k^\acyc/k$ totally ramified). 
Then $\CW_k^\bp$ is direct factor of $\CT_k$ if and only if $\rk(\CT_k) = \rk(\CH_k)+1$.

\smallskip
(ii) $k^\acyc$ not disjoint from $H_k^\nr$ over $k$ (i.e., $k_1^\acyc/k$ unramified). 
Then $\CW_k^\bp$ is direct factor of $\CT_k$ if and only if there exists a radical 
$w \in W_{\chi^*}$, of conjugate $w'$, defining a {\it ramified} cubic extension 
$K_1$ of $k$. In other words, using {\sc pari/gp} notations, if ${\sf Q = 
x^3-3*a*x-t}$ (${\sf a^3 = w*w'}$, ${\sf t = w+w'}$, cf. Proposition \ref{Q}) 
and ${\sf R = polcompositum(Q,x^2+m)[1]}$, $\CW_k^\bp$ is direct 
factor if and only if ${\sf valuation(nfdisc(R),3)} > 3$.

\medskip\noindent
{\bf Result E} (Theorems \ref{disjunction}, \ref{WvsR}, $p=3$).
Let $k := \Q(\sqrt{-m})$, $m \equiv 3 \pmod 9$, and let $k^*$
be the reflection of $k$ in the mirror involution. Assume that $\CH_{k^*} \ne 1$
and $\rk(\CT_k) = \rk(\CH_{k^*})$ (Normal Split case); then:

\smallskip
(i) The first layer $k_1^\acyc/k$ is always unramified.

\smallskip
(ii) The normalized $3$-adic regulator $\CR_{k^*} 
\simeq \Gal(H_{k^*}^\pr/H_{k^*}^\nr)$ 
(\cite[Section 5]{Gra2018} for the definition) 
is not direct factor in $\CT_{k^*}/\CT_{k^*}^3$ if and 
only if $\CW_k^\bp$ is direct factor in $\CT_k$.

\medskip\noindent
{\bf Result F} (Theorems \ref{capitule2}, \ref{capitule1}). Assume that 
$k^\acyc/k$ is totally ramified, that $\CH_k \simeq \Z/p\Z$ and 
$\CH_{k_1^\acyc} \simeq \Z/p\Z \times \Z/p\Z$. Then $\CH_k$ 
capitulates in $k_1^\acyc$.

\medskip\noindent
{\bf Result G} (Theorems \ref{lambdamu}, Corollary \ref{cyclotomic}). 
Let $K/k$ be a totally ramified $\Z_p$-extension of the imaginary quadratic 
field $k$, in the case where $p \geq 3$ does not split in $k$. Then 
$\lambda_p(K/k) = \mu_p(K/k) = 0$ if and only if $\CH_k$ capitulates in $K$.
It follows that, if $\CH_k \ne 1$, the cyclotomic $\Z_p$-extension $k^\cyc/k$
is such that $\lambda_p(k^\cyc/k) \geq 1$ and $\mu_p(k^\cyc/k) = 0$.

\medskip\noindent
{\bf  Computational Results} (Algorithms \eqref{method}, \eqref{abcd}, 
$p = 3$). We distinguish in Definition \ref{fourcases} four arithmetic 
properties of the field $k^* = \Q(\sqrt{3m})$ (or $\Q(\sqrt{m/3})$)
(Non Split, Normal Split, Special Split and Trivial), corresponding to  
programs (\eqref{program1}, \eqref{program2}, \eqref{program3}, 
\eqref{program4}). 

\smallskip
These programs lead to outputs of the following form, where the 
first data are the main invariants of ${k}$ and ${k^*}$, where ${L_w}$ 
is the list of the radicals $w_J^{} \in W_{\chi^*}$ of the 
$3$-ramified cyclic cubic fields of $k$, defined by the polynomials $Q$; 
they are indexed by the ${(3^{{\rm rt(k)}+1}-1)/2}$ values of $J$ depending on 
the $3$-rank ${\rm rt(k)}$ of $\CT_k$ (see \S\,\ref{notations} for the specific
{\sc pari/gp} notations):

\smallskip
\ft\begin{verbatim}
m=157019 Disc=157019 kronecker(-m,3)=1 h_k=135 H_k=[45,3] T_k=[9,3]  
H_kstar=[3,3] #T_k^bp=27 #W_k^bp=1  k_1^ac/k is Ramified 
Lw=List([
Mod(-381*x-261494,x^2-471057),
Mod(7850*x-5387737,x^2-471057),
Mod(-103*x+70628,x^2-471057),
Mod(24614*x+16891619,x^2-471057),
Mod(72157379882980011220138260*x-49524201965286630108480893399,x^2-471057),
Mod(-946345768982667613775421*x+649511097383218404100625686,x^2-471057),
Mod(12411347473034251552608*x-8518353630809707578446027,x^2-471057),
Mod(-777529970846087123062729061699770*x+533646750653812276531575169634285063,x^2-471057),
Mod(10197324228799635133629457047377*x-6998789942644566371270229378089032,x^2-471057),
Mod(-133738152002179343848094880046*x+91789298072647800633466465420499,x^2-471057),
Mod(8378253985168811268921483768324271465040*x-
    5750296686920553899462594924114216274058931,x^2-471057),
Mod(-109881002098260105810426670642062717049*x+
    75415279059292515445513767333492466789234,x^2-471057),
Mod(1441091979724050966458814918285313052*x-
    989073194871404415565709148797397703463,x^2-471057)])
J=1 q=5 Qq irreducible
SOLUTION:J=2 w=Mod(7850*x-5387737,x^2-471057) 
Q^ac=x^3-318*x-4067    H_kacyc=[135,3,3,3]    Val=4    a=1
J=3 q=5 Qq irreducible
J=4 q=5 Qq irreducible
J=5 q=19 Qq irreducible
J=6 q=5 Qq irreducible
J=7 q=5 Qq irreducible
J=8 q=19 Qq irreducible
J=9 q=5 Qq irreducible
J=10 q=5 Qq irreducible
J=11 q=19 Qq irreducible
J=12 q=5 Qq irreducible
J=13 q=5 Qq irreducible
W_k^bp=1 is not direct factor of T_k
Algebraic norm of H_kacyc=[135,3,3,3] H_k=[45,3]
Norm of the component 1 of H_kacyc: [3,0,0,0]
Norm of the component 2 of H_kacyc: [0,0,0,0]
Norm of the component 3 of H_kacyc: [0,0,0,0]
Norm of the component 4 of H_kacyc: [0,0,0,0]
PARTIAL CAPITULATION OF H_k
\end{verbatim} \ns

\smallskip
See Algorithm \eqref {abcd} for the interpretation of the above matrix 
leading to the kernel of capitulation. Numerical examples, 
complements and illustrations are given in Appendix \ref{AppA}.

\section{The arithmetic of \texorpdfstring{$k^\acyc/k$}{Lg} -- Algorithms}
\subsection{Unit groups of the layers of \texorpdfstring{$k^\acyc$}{Lg}}
Our main goal is to verify the philosophy suggesting that, for imaginary 
quadratic fields~$k$, $k^\acyc/k$, but also the non-Galois $\Z_p$-extensions 
$K/k$, behave like the totally real case of Greenberg's conjectures related to 
$k'^\cyc/k'$ when $k'$ is any totally real number field. The reason is that there 
is, a priori, no systematic obstruction for capitulations of the $p$-class groups 
in the $\Z_p$-extensions $K \ne k^\cyc$; this is due to the groups of 
units $E_n$ of the layers $K_n$ (the $\Z$-rank of $E_n$ being $p^n-1$), 
which consist of {\it non-totally real} units. This applies to the non-cyclotomic 
$\Z_p$-extensions of $k$ and in particular to $k^\acyc$; refer to Remark 
\ref{cyclocap} explaining why the case of $k^\cyc$, for which 
$E_{k_n^\cyc} = E_{\Q_n^\cyc}$, does not work.

\smallskip
It is well known that for all $n \geq 0$, there exists an injective map:
$$\Ker_{\CH_k} \big (\BJ_{k_n^\acyc/k} \big) \too {\bf H}^1(G_n,E_n)
 = E_n/E_n^{1- \sigma_n}, $$
where $\BJ$ is the transfer map, and
$\sigma_n$ a generator of $G_n = \Gal(k_n^\acyc/k)$, which 
associates with the capitulation in $k_n^\acyc$ of $\Ccl({\mathfrak a})$, the unit 
$\eta_n = \alpha_n^{1- \sigma_n}$, modulo $E_n^{1- \sigma_n}$,
where $({\mathfrak a})_n =: (\alpha_n)$, $\alpha_n \in k_n^\acyc{}^\times$ (see, 
e.g., \cite{Jau1988}). But this is a tautological translation of the property of
capitulation, without any information and computability.

\smallskip
A feeling is that the capitulation kernel of $\BJ_{k_1^\acyc/k}$
(of order bounded by $\order \CH_k$) can become non-trivial for 
$\BJ_{k_n^\acyc/k}$, $n$ large enough; but this is of course a 
non-algebraic property of units, which explains the almost 
hopelessness of proving capitulation in $k^\acyc$, and more 
generally proving analogues of Greenberg's conjectures. 
Moreover, the above injective map does not give any information
on the image, even when the structure of ${\bf H}^1(G_n,E_n)$ is known.

\smallskip
The {\it existence of a link} between units and capitulation 
of $\CH_k$ in $K \ne k^\cyc$ may be better specified by the following result
of which we give a more general statement (it is rather 
obvious but over an imaginary quadratic field one brings 
more precise properties). 

\begin{theorem}\label{degreecap}
For $p \geq 3$, let $K$ be a $p$-cyclic extension or a $\Z_p$-extension 
of $k = \Q(\sqrt{-m})$, $m \ne 3$.
Let $\Ccl({\mathfrak a}) \in \CH_k$ of order $p^e$, $e \geq 1$, and let 
$\alpha_0^{} \in k^\times$ be the generator of ${\mathfrak a}^{p^e}$.

\smallskip
(i) If $\Ccl({\mathfrak a})$ capitulates in the layer $K_n$, then $n \geq e$. 

\smallskip
(ii) For $n \geq e$, $\Ccl({\mathfrak a})$ capitulates in $K_n$ if and only if there 
exists $\varepsilon^\capitul_n \in E_n$ such that:
$$\alpha_0^{} = \varepsilon^\capitul_n \cdot \alpha_n^{p^e},\ \,
 \alpha_n \in K_n^\times. $$

(iii) If $\Ccl({\mathfrak a})$ capitulates in $K_n$, $\varepsilon^\capitul_n 
\notin E_n^p$ and is unique in $E_n/E_n^{p^e}$.
\end{theorem}

\begin{proof}
To simplify, put $({\mathfrak a})_n := \BJ_{K_n/k}({\mathfrak a})$.
Suppose that $\Ccl({\mathfrak a}) \in \CH_k$ capitulates in $K_n$ 
for $n < e$, and set $({\mathfrak a})_n = (\alpha_n)$, $\alpha_n 
\in K_n^\times$; then $({\mathfrak a})_n^{p^e} = (\alpha_n)^{p^e}$, 
whence $(\alpha_0^{})_n = (\alpha_n^{p^e})$; so there exists
$\varepsilon^\capitul_n \in E_n$ such that
$\alpha_0^{} = \varepsilon^\capitul_n \cdot \alpha_n^{p^e}$.
Taking the arithmetic norm in $K_n/k$, this yields 
$\alpha_0^{p^n} = \pm\BN_{K_n/k}(\alpha_n)^{p^e}$; 
then $\alpha_0^{} =  (\pm \BN_{K_n/k}(\alpha_n))^{p^{e-n}}$,
hence ${\mathfrak a}^{p^e} = (\BN_{K_n/k}(\alpha_n)^{p^{e-n}})$
giving ${\mathfrak a}^{p^n} = (\BN_{K_n/k}(\alpha_n))$ in $k$ for
$n < e$ (absurd). Reciprocally, $\alpha_0^{} = \varepsilon^\capitul_n 
\cdot \alpha_n^{p^e}$ implies trivially the capitulation of $\Ccl({\mathfrak a})$
in $K_n$.

\smallskip
Let's prove that $\varepsilon^\capitul_n \notin E_n^p$. Suppose
$\varepsilon^\capitul_n = \eta_n^p$, $\eta_n \in E_n$, thus
$\alpha_0^{} = \eta_n^p \cdot \alpha_n^{p^e} $;
then setting $\beta_n := \eta_n \cdot \alpha_n^{p^{e-1}}$, 
one gets $\alpha_0^{} = \beta_n^p$ in $K_n^\times$.
This cannot define a Kummer $p$-cyclic extension of~$k$
since $\mu_p \not\subset k^\times$; thus $\beta_n = \beta 
\in k^\times$ and ${\mathfrak a}^{p^e} = (\beta)^p$, 
implying ${\mathfrak a}^{p^{e-1}} = (\beta)$ in $k$ (absurd).
\end{proof}

\begin{remarks}\label{cyclocap}
(i) These results do not apply to $k^\cyc$; indeed, since 
in this case $\varepsilon^\capitul_n \in \Q_n^\cyc$,
taking $\BN_{k_n^\cyc/\Q_n^\cyc}$ leads to 
$\BN_{k/\Q}(\alpha_0^{}) = a^{p^e} =
(\varepsilon^\capitul_n)^2 \cdot 
\BN_{k_n^\cyc/\Q_n^\cyc}(\alpha_n^{p^e})$, for   
$a \in \Q^\times$ since $(\alpha_0^{}) = {\mathfrak a}^{p^e}$, 
whence $\varepsilon^\capitul_n \in E_n^{p^e}$ giving 
${\mathfrak a}$ $p$-principal in $k$ (absurd);
a capitulation in $k^\cyc/k$ is impossible.

\smallskip
(ii) Note that if $\tau$ is the complex conjugation, 
$\big (\langle \varepsilon^\capitul_n \rangle E_n^{p^e}/E_n^{p^e}\big)^{1-\tau} 
= 1$ for $k^\cyc$ (because $\varepsilon^\capitul_n$ is real), while 
$\big(\langle \varepsilon^\capitul_n \rangle 
E_n^{p^e}/E_n^{p^e}\big)^{1+\tau} = 1$ for $k^\acyc$ (use
$\alpha_0^{} = \varepsilon^\capitul_n \, \alpha_n^{p^e}$,
and $\alpha_0^{1+\tau} = \BN_{k/\Q}(\alpha_0^{}) =  
\BN_{k/\Q}({\mathfrak a}^{p^e})=a^{p^e}$, $a \in \Q^\times$, giving
$(\varepsilon^\capitul_n)^{1+\tau} = (a \cdot \alpha_n^{-(1+\tau)})^{p^e}
\in E_n^{p^e}$).
This defines plus and minus components, in the meaning
of $p^{e\,{\rm th}}$-powers; in particular, for $k^\acyc$:
$$\langle \varepsilon^\capitul_n \rangle \, E_n^{p^e}/E_n^{p^e}
= \big(\langle \varepsilon^\capitul_n \rangle \, E_n^{p^e}/E_n^{p^e} \big)^-
 \simeq \Z/p^e \Z, $$

\noindent
which is coherent with the 
arithmetical context given by $k^\acyc/k$ where minus components
play the essential role and must be non-trivial (see \S\,\ref{minus}); in other 
words, elements of order $p^e$ of $(E_n/E_n^{p^e})^-$ are necessary to 
obtain capitulations, in $K_n$, of classes of $k$.  See Appendix \ref{capunit} 
for a numerical example giving the unit $\varepsilon^\capitul_n$ for $n=1$.
\end{remarks}

\subsection{Artin group of the first layer of a \texorpdfstring{$\Z_p$}{Lg}-extension}
Let $L$ be any number field, and let $H_L^\pr$ be the 
maximal abelian $p$-ramified pro-$p$-extension of $L$. 
For more references and generalities on the $\Log_p$-function 
defined on $\Gal(H_L^\pr/L)$, giving a canonical description of the 
Galois groups of the subfields of the compositum $\wt L$ of the 
$\Z_p$-extensions of $L$, see our original paper \cite{Gra1985}, taken 
into account in our book for a general presentation. 
See also \cite{Jau1998} and the large bibliography of \cite{Jau2016a} 
about infinitesimal arithmetic.

\smallskip
Recall that this $p$-adic class field theory is based on the isomorphisms: 
\begin{equation}
\Gal(\wt L/L) \simeq \Log_p (I_L \otimes \Z_p)\ \ \& \ \
\Gal(\wt L/\wt L \cap H_L^\nr) \simeq \Log_p (P_L \otimes \Z_p),
\end{equation} 

\noindent
where $H_L^\nr$ is the $p$-Hilbert class field of $L$, $I_L$ (resp. $P_L$) 
the group of prime-to-$p$ ideals (resp. principal ideals) of $L$, with: 
\begin{equation}\label{calcul}
\Log_p({\mathfrak a}) = \ffrac{1}{p^e}\log_p(\alpha) := \ffrac{1}{p^e}
\big(\log_{\mathfrak p}(\alpha) \big)_{{\mathfrak p} \mid p} \in 
\hbox{$\prod_{{\mathfrak p} \mid p}$}\, L_{\mathfrak p}
\pmod {\log_p(E_L \otimes \Q_p)}, 
\end{equation} 
as soon as  ${\mathfrak a} \in I_L \otimes \Z_p$
is such that ${\mathfrak a}^{p^e} =: (\alpha)$, $\alpha \in L^\times \otimes \Z_p$,
where $E_L$ is the group of units of $L$. 
Of course, $\log_p(E_L \otimes \Q_p)$ only depends on Leopoldt's 
conjecture but not of the units strictly speaking.
If $L=k = \Q(\sqrt{-m})$, $\Log_p({\mathfrak a}) = \frac{1}{p^e}\log_p(\alpha)$, 
since $\log_p(E_k \otimes \Q_p) = 0$.

\smallskip
The map $\Log_p : \Gal(H_L^\pr/L) \to \Gal(\wt L/L) 
\simeq \Log_p (I_L \otimes \Z_p)$ must be understood 
as the map $\Log_p \circ \Artmap^{-1}$, where
$\Artmap$ is the Artin symbol defined on $I_L \otimes \Z_p $
with values in $\Gal(H_L^\pr/L)$, giving the exact sequence 
\cite[Theorem III.2.5, \S\,III.6]{Gra2005}:
$$1 \too \CT_L \tooo \Gal(H_L^\pr/L) \mathop{\relbar\lien
\relbar\lien\relbar\lien\relbar\lien\toooo}^{\Log_p \circ \Artmap^{-1}} 
\Log_p (I_L \otimes \Z_p) \too 1. $$

Recall that in a ``ray class group'' viewpoint, one may write 
\cite[Theorem III.2.4]{Gra2005}:
\begin{equation}
\Gal(H_L^\pr/L) \simeq I_L \otimes \Z_p \hbox{$\big/ \bigcap_{n>0}$}
\big(P_{L, (p^n)} \otimes \Z_p \big), 
\end{equation}
with ray-groups  $P_{L, (p^n)}$ of modulus $(p^n)$; it is clear that 
$\Log_p$ is trivial on $\bigcap_{n>0} \big(P_{L, (p^n)} \otimes \Z_p \big)$
and that $\Ker(\Log_p) = \CT_L = \tor_{\Z_p}^{}(\Gal(H_L^\pr/L))$.

\medskip
In the present paper, we restrict ourselves to the case  
$k=\Q(\sqrt{-m})$ and introduce $M := k(\mu_p^{})$, $p \ne 2$; we will 
apply the following result to $L=k$ or $L=M$:

\begin{theorem} \label{artingroup}
Denote by $H_L^\pr$ the maximal abelian $p$-ramified pro-$p$-extension 
of $L$ and let $H_{L,1}^\pr$, of Artin group $\Art(\hbox{\ft$H_{L,1}^\pr$})$, 
be the maximal elementary $p$-sub-extension of $H_L^\pr/L$.

\smallskip
(i) Let ${\mathfrak b}^{}_1, \ldots , {\mathfrak b}_{t_L} \in I_L \otimes \Z_p$, 
where $t_L$ is the $p$-rank of $\CT_L$, be such that:
\begin{equation} \label{artin}
\langle\, {\mathfrak b}^{}_1, \ldots , {\mathfrak b}_{t_L} \,\rangle 
\cdot \Art(\hbox{\ft$H_{L,1}^\pr$}) = \big\{ {\mathfrak a} \in I_L \otimes \Z_p,\ 
\Log_p({\mathfrak a}) \in p \,\Log_p(I_L \otimes \Z_p)\big \}. 
\end{equation}

\smallskip
Then the Artin group of $\wt L_1$ is the left member of \eqref{artin}; 
so $\Gal(\wt L/\wt L_1) \simeq p\,\Log_p (I_L \otimes \Z_p)$. 

\medskip
(ii) Put $M=L(\mu_p^{})$, let $\{w M^{\times p},\ M(\sqrt[p]{w}) \subseteq 
H_{M,1}^\pr \,\rangle\}$ be the Kummer radical of $H_{M,1}^\pr$, and then
$\{w M^{\times p},\ M(\sqrt[p]{w}) \subseteq \wt M_1 \,\rangle\}$
that of $\wt M_1$. 
We consider the analogue of \eqref{artin} for $M$. The Artin group 
of $\wt k_1$, obtained by taking the relative norm in $M/k$, is:
\begin{equation} \label{artink}
\langle\, {\mathfrak b}^{}_1, \ldots , {\mathfrak b}_{t_k} \,\rangle 
\cdot \Art(\hbox{\ft$H_{k,1}^\pr$}) = \big\{ {\mathfrak a} \in I_k \otimes \Z_p,\ 
\Log_p({\mathfrak a}) \in p \,\Log_p(I_k \otimes \Z_p)\big \}. 
\end{equation}
\end{theorem}

In other words, one uses Artin automorphisms of ideals ${\mathfrak b}$ 
fulfilling some $p$-adic congruences from the condition $\Log_p({\mathfrak b}) 
\in p \,\Log_p(I_L \otimes \Z_p)$; this condition depends on 
$\Log_p(I_L \otimes \Z_p)$ and if $I_L = \langle {\mathfrak a}_1, \ldots, 
{\mathfrak a}_r \rangle\,P_L$, then
$\Log_p(I_L \otimes \Z_p) = \big\langle \Log_p({\mathfrak a}_i) \big \rangle_{\Z_p}
+ \Log_p(P_L \otimes \Z_p)$,  
where $\Log_p(P_L \otimes \Z_p)$ is standard regarding $L$ 
and may be computed once for all. The $\Log_p({\mathfrak a}_i)$'s 
are a priori random in the free module 
$p^\Z \cdot \CO_L$, where $\CO_L$ is the ring of integers of the 
local algebra $\prod_{{\mathfrak p} \mid p}\,L_{\mathfrak p}$.

\smallskip
We will proceed differently without precise $p$-adic computations 
from the above and \eqref{calcul}, noting that the index 
$\big [\Log_p(I_L \otimes \Z_p) : \Log_p(P_L \otimes \Z_p) \big]$ 
depends on orders of classical invariants of $L$ 
that {\sc pari/gp} has in its library (see Theorem \ref{Val}).

\subsection{Minus and plus components attached to 
\texorpdfstring{$H_k^\pr/k$}{Lg}}\label{minus}
The totally imaginary field $H_k^\pr$ is not a CM-field, but one may 
define {\it plus}  and {\it minus} components as follows: 

\medskip
Put $g := \Gal(k/\Q)= \{1, \tau\}$. Since $k$ is imaginary, complex
conjugation $\tau$ operates on various objects $X_k$ associated to
the arithmetic of $H_k^\pr$; we will say that $X_k$ is a {\it minus} component if 
$x^\tau = x^{-1}$ for all $x \in X_k$ and a {\it plus} component if $x^\tau = x$ 
for all $x \in X_k$. For $p \ne 2$, any $\Z_p[g]$-module $X_k$ is of the 
form $X_k = X_k^+ \oplus X_k^-$, where $X_k^+ = X_k^{\frac{1+\tau}{2}}$
and $X_k^- = X_k^{\frac{1-\tau}{2}}$. 

\smallskip
For instance, $\CH_k \simeq \Gal(H_k^\nr/k)$, $\CT_k \simeq 
\Gal(H_k^\pr/\wt k)$ and $\Gal(k^\acyc/k)$ are {\it minus} components, 
$\Gal(k^\cyc/k) \simeq \Gal(\Q^\cyc/\Q)$ is a {\it plus} component.
In particular, we will use the relations:
$$\hbox{$\Gal(k^\acyc/k) \simeq \Log_p(I_k \otimes \Z_p)^-$, 
$\CO_k^- \simeq \Z_p\,\sqrt{-m}$, }$$
where $\CO_k = \prod_{{\mathfrak p}\mid p}\CO_{\mathfrak p}$ is the
product of the local rings of integers of the $k_{\mathfrak p}$'s; note that
$\CO_{\mathfrak p} = \Z_p$ (resp. $\CO_{\mathfrak p} = \Z_p \oplus 
\Z_p \,\sqrt{-m}$) if $p$ splits (rep. is inert or ramified) in $k/\Q$.

\begin{remarks}
(i) An element $x \in X$ is in $X^-$ if and only if $x^{1+\tau} = 1$. For instance, 
if $X = k^\times/k^{\times p}$ and $(\alpha) = {\mathfrak a}^p$ in $k$, 
the class $x$ of $\alpha$ in $X$ is in the minus component since 
$\alpha^{1+\tau} = a^p$ for $a = \BN{\mathfrak a}$ (absolute norm);
so, $x \in X^-$. Most often, such remarks will be implicit. More generally, 
for $M = k(\mu_p^{})$, we will work in the $\Gal(M/\Q)$-module 
$\Gal(H_M^\pr/M)$ and the invariants of $H_M^\pr$, decomposed 
by means of the characters of $\Gal(M/\Q)$.

\smallskip
(ii) Let $\chi$ be the quadratic character of $k$. The case of minus components, 
discussed above for $k$, corresponds to objects $X_k$ of $M_\chi = k$ of norm $1$ 
in $k^\times/k^{\times p}$, whence $\BNu_{k/\Q} (x) := x^{1+\tau} = 1$ for all $x \in X_\chi$. 
In other words, $X_\chi = X_k^-$. 
\end{remarks}

\subsection{\texorpdfstring{$p$}{Lg}-ranks of ray class fields}
Since $\CT_k$ is a direct factor of $\Gal(H_k^\pr/k)$, the structure of 
$\Gal(H_k^\pr/k)$ giving $\rkp(\CT_k) = 
\rkp\big (\Gal(H_k^\pr/k)^-\big)-1$ may be analyzed at a finite steps by 
means of $\Gal(H_k(p^\nu)/k) \simeq \CH_k(p^\nu)$ for a $p$-ray class 
field of modulus $(p^\nu)$. Since {\sc pari/gp} gives the structure 
of ray class groups $\CH_k(p^\nu)$, the computations of the 
structure of $\CT_k$, and in particular of its $p$-rank, are effective
(see proof of Theorem \ref{T}). We will recall in Section \ref{calculT},
from \cite[Section 2]{Gra2019c}, that the minimal $\nu_0^{}$ needed 
to obtain the $p$-rank of $\CT_k$ is $\nu_0^{}=2$ (resp.~$3$) for 
$p \ne 2$ (resp. $p=2$); for the structure of $\CT_k$, in our context, 
we will deduce the minimal value of $\nu$ from the knowledge of the 
exponent of $\CH_k$.

\begin{theorem}  \label{thmfond1}
(i) For ${\mathfrak p} \mid p$ in $k$ and $j\geq 1$, let 
$\CU_{\mathfrak p}^j = 1+ {\mathfrak p}^j\CO_{\mathfrak p}$.
For $m \geq n \geq 1$, we have
$0 \leq \rkp(\CH_k(p^m))-\rkp(\CH_k(p^n)) \leq \hbox
{$\sum_{{\mathfrak p} \mid p}$\ } \rkp \big((\CU_{\mathfrak p}^1)^p\,
\CU_{\mathfrak p}^{n \, \cdot\, e_{\mathfrak p}}/
(\CU_{\mathfrak p}^1)^p\,\CU_{\mathfrak p}^{m \,\cdot \, e_{\mathfrak p}} \big)$, 
where $e_{\mathfrak p}$ is the ramification index of ${\mathfrak p}$ in $k/\Q$.

\smallskip
(ii) $\rkp(\CH_k(p^m)) = \rkp(\CH_k(p^n)) = \rkp(\Gal(H_k^\pr/k))$,
for all $m \geq n > \ffrac{p}{p-1}$. 

(iii) $H_{k,1}^\pr \subseteq H_{k,1}(p^2)$, 
whence $k_1^\acyc \subseteq H_{k,1}(p^2)^-$. 
\end{theorem}

\begin{proof} 
(i) From \cite[Theorem I.4.5 \& Corollary I.4.5.4]{Gra2005} 
applied to the set of $p$-places and for the ordinary sense.

\smallskip
(ii) It is sufficient to get, for some $n \geq 1$,
$(\CU_{\mathfrak p}^1)^p\,\CU_{\mathfrak p}^{n  \,\cdot \, e_{\mathfrak p}} 
= (\CU_{\mathfrak p}^1)^p$,
hence $\CU_{\mathfrak p}^{n \,\cdot \, e_{\mathfrak p}} \subseteq 
(\CU_{\mathfrak p}^1)^p$, for all ${\mathfrak p} \mid p$; 
indeed, we then have
$\rkp(\CH_k(p^n)) = \rkp(\CH_k(p^m)) = \rkp(\Gal(\wt k/k))+ \rkp(\CT_k)$, 
as $n \to \infty$, giving $\rkp(\CH_k(p^n)) = 2+ \rkp(\CT_k)$ for such $n$'s. 
The condition $\CU_{\mathfrak p}^{n \,\cdot \, e_{\mathfrak p}} \subseteq 
(\CU_{\mathfrak p}^1)^p$ is fulfilled as soon as
$n \cdot e_{\mathfrak p} > \ffrac{p \cdot e_{\mathfrak p}}{p-1}$, hence
$n > \ffrac{p}{p-1}$ (see \cite[Chap. I, \S\,5.8, Corollary~2]{FV2002} or 
\cite[Proposition 5.7]{Wa1997}), whence the minimal $n$, 
giving $\nu_0^{} = 2$ for $p \ne 2$ and $\nu_0^{} = 3$ for $p = 2$; 
furthermore, for $p \geq 3$, $\CH_k(p^2)$ gives the $p$-rank of $\CT_k$.
\end{proof}

\subsection{Technical conditions for \texorpdfstring{$\Log_p({\mathfrak q}) 
\in p\, \Log_p(I_k \otimes \Z_p)$}{Lg}}\label{mainremarks}

We consider the expression \eqref{artink} in Theorem \ref{artingroup}, where 
the ${\mathfrak b}_i$'s are replaced by enough prime ideals ${\mathfrak q}$ 
as explained. In the practice, we suppose $p = 3$ for programming which is 
specific for an imaginary quadratic field; but the results hold for any prime 
$p \nmid [k : \Q]$, where $k$ is an arbitrary abelian imaginary number field, 
and for its odd characters $\chi$, since only the search of the Kummer 
radicals depends on $p$ via the definition of $\chi^* := \omega_p\,\chi^{-1}$, 
function of the cyclotomic character $\omega_p$. 

\smallskip
By abuse, we will often confuse a class $w M^{\times p}$ with a 
representative $w \in  M^\times$.

\smallskip
Since the sequel depends on the minus part of the schema of 
$H_k^\pr/\wt k/k^\acyc/k$, the following diagram may be useful when 
$p=3$, $m \equiv 3 \pmod 9$ in which case $\CT_k = \CT_k^\bp \oplus 
\CW_k^\bp$ with $\CW_k^\bp \simeq \Z/3\Z$, and when $H_k^\nr$ is 
not linearly disjoint from $k^\acyc$ (whence $k_1^\acyc/k$ unramified). Only 
$H_k^\nr$, $H_k'^\bp := k^\acyc H_k^\nr$ and $H_k^\bp = \wt k H_k^\nr$ 
are canonical.
Since $\Gal(\wt k/k^\acyc)$ and $\Gal(k^\acyc/k)$ are free, the existence 
of three direct compositum $H_k'^\pr = H_k'^\bp F''_0$, $F'=H_k^\nr F'_0$,
and $F=H_0 F_0$ is clear.
The extension $H_k'^\bp/k^\acyc$ is unramified ({\sc nra}), and the extensions 
$H_k'^\pr/H_k'^\bp$ and $F'/H_k^\nr$ are ramified ({\sc ram}), of degree $3$.

\smallskip  
Some notations have been simplified, as $\Log(I)^- := \Log_p(I_k \otimes \Z_p)^-$:

\unitlength=1.55cm 
\begin{equation}\label{schemalog}
\begin{aligned}
\vbox{\hbox{\hspace{-0.5cm}
\begin{picture}(9.2,7.2)
\put(6.05,4.5){\line(3,1){1.98}}
\put(6.05,6.45){\line(3,1){1.98}}
\put(4.05,4.55){\line(3,1){1.95}}
\put(3.85,6.5){\line(3,1){1.98}}
\put(8.1,5.1){$F'$}
\put(6.0,5.1){$F'_0$}
\put(6.0,0.7){$F_0$}
\put(4.34,0.42){\tiny {\sc ram}}
\put(4.34,0.42){\tiny {\sc ram}}
\put(6.95,0.86){\tiny {\sc nra}}
\put(6.95,5.26){\tiny {\sc nra}}
\put(8.26,6.2){\tiny {\sc ram}}
\put(3.8,6.45){\line(1,0){1.8}}
\put(4.05,4.50){\line(1,0){1.5}}
\put(6.25,7.2){\line(1,0){1.8}}
\bezier{300}(3.65,6.36)(6.8,5.2)(8.0,7.0)
\put(4.8,6.55){\tiny$\simeq \! \CT_k^\bp$}
\put(4.7,6.3){\tiny {\sc nra}}
\put(6.45,5.78){\ft$\CT_k$}
\put(6.55,6.82){\tiny$\simeq \! \CW_k^\bp$}
\put(6.76,6.56){\tiny {\sc ram}}
\put(4.9,4.55){\tiny {\sc nra}}
\put(4.9,4.98){\tiny {\sc ram}}
\put(7.0,4.65){\tiny${\mathcal W}'^\bp_k$}
\put(6.5,4.8){\tiny {\sc ram}}
\put(8.2,5.4){\line(0,1){1.6}}
\put(8.2,1.0){\line(0,1){4.0}}
\put(5.8,0.2){\line(0,1){4.0}}
\put(8.1,0.7){$F$}
\put(5.63,-0.04){$H_0$}
\put(4.6,0.08){\tiny {\sc nra}}
\put(6.5,0.4){\tiny {\sc ram}}
\put(5.8,4.66){\line(0,1){1.6}}
\put(5.45,5.5){\tiny {\sc ram}}
\put(3.50,4.7){\line(0,1){1.6}}
\bezier{40}(6.15,0.9)(6.15,3.0)(6.15,5.1)
\bezier{40}(6.15,5.1)(6.15,3.0)(6.15,0.9)
\bezier{15}(6.15,5.4)(6.15,6.2)(6.15,7.0)
\bezier{15}(6.15,7.0)(6.15,6.2)(6.15,5.4)
\bezier{40}(6.15,0.9)(6.152,3.022)(6.15,5.1)
\bezier{40}(6.15,5.1)(6.152,3.022)(6.15,0.9)
\bezier{15}(6.15,5.4)(6.152,6.22)(6.15,7.0)
\bezier{15}(6.15,7.0)(6.152,6.22)(6.15,5.4)
\bezier{100}(3.8,0.1)(4.9,0.45)(6.0,0.8)
\put(3.50,1.5){\line(0,1){2.8}}
\put(3.50,0.3){\line(0,1){0.8}}
\put(3.35,1.25){$k_1^\acyc$}
\put(3.44,0.04){$k$}
\put(3.55,0.7){\ft$p$}
\bezier{400}(3.3,1.3)(2.2,3.4)(3.4,6.3)
\put(1.9,2.6){\ft$p\Log(I)^{\!-}$}
\bezier{500}(3.3,0.2)(0.0,3.2)(3.3,6.4)
\bezier{100}(3.6,4.7)(3.9,5.1)(3.6,6.3)
\put(3.75,5.0){\ft$\Log(P)^{\!-}$}
\put(0.88,3.1){\ft$\Log(I)^{\!-}$}
\bezier{400}(3.7,0.15)(5.5,1.8)(5.65,4.3)
\put(4.3,3.5){\ft$\big\langle \!\Log({\mathfrak q})^{\!-}\big \rangle$}
\put(3.12,5.34){\tiny {\sc ram}}
\put(3.12,3.55){\tiny {\sc nra}}
\put(3.12,2.0){\tiny {\sc nra}}
\put(3.12,0.7){\tiny {\sc nra}}
\put(4.45,6.86){\tiny {\sc ram}}
\put(6.95,7.25){\tiny {\sc nra}}
\put(8.26,3.0){\tiny {\sc nra}}
\bezier{300}(3.7,2.78)(5.7,5.2)(3.65,6.3)
\put(3.4,2.7){$-$}
\put(4.9,1.55){\ft$\CH_k$}
\put(5.45,2.1){\tiny {\sc nra}}
\put(5.6,6.4){$H_k'^\bp$}
\put(8.1,7.12){$H_k'^\pr$}
\put(5.92,7.15){$F''_0$}
\put(3.45,6.4){$k^\acyc$}
\put(5.55,4.4){$H_k^\nr$}
\put(4.85,4.32){\tiny$\CH'_k$}
\put(2.95,4.42){$k^\acyc \!\cap\! H_k^\nr$}
\put(6.1,0.1){\line(3,1){1.95}}
\bezier{75}(6.2,0.8)(7.15,0.8)(8.1,0.8)
\bezier{75}(6.2,5.2)(7.15,5.2)(8.1,5.2)
\put(3.8,0.02){\line(1,0){1.75}}
\end{picture} }} 
\end{aligned}
\end{equation}
\unitlength=1.0cm 

\medskip
Prime ideals ${\mathfrak q} \mid q$, split in $k$, will be auxiliary ideals of 
which we will calculate the logarithm to obtain primes ${\mathfrak q}$ 
such that their Artin symbols $\Big (\ffrac {H_k'^\pr/k}{{\mathfrak q}}\Big)$   
be in $\Gal(H_k'^\pr/k_1^\acyc)$, in other words such that ${\mathfrak q}$ 
splits in $k_1^\acyc/k$ and be inert in some of the other cubic extensions
that are eliminated; this 
method simplifies and generalizes in a systematic way many computations 
in the literature as in Brink's Examples \cite[Section IV]{Br2007} where 
similar estimations of $\hbox{\sc Val}$ are obtained in particular cases ($m=21$, 
$107$ for $p=3$).

\smallskip
Let $\hbox{\ft{\sc exp}}(k)$ be the exponent of the $p$-class group $\BH_k$ 
and let $\hbox{\ft{\sc expta}}(k)$ be the exponent of the tame part of $\BH_k$.
Let $q$ be a prime number, split in $k$, and let ${\mathfrak q} \mid q$; put:
$${\mathfrak q}^{\hbox{\tiny{\sc exp}}(k) \cdot \hbox{\tiny{\sc expta}}(k)} =: 
(\beta), \ \, \beta \in k^\times,\  {\rm integer}. $$

By definition, $\Log_p({\mathfrak q}) = 
\ffrac{1}{\hbox{\tiny{\sc exp}}(k) \cdot \hbox{\tiny{\sc expta}}(k)}\,
\log _p(\beta)$ and $\Log_p({\mathfrak q})^- \in p \,\Log_p(I_k \otimes \Z_p)^-$
is equivalent to
$\log_p(\beta)^- \in p \cdot \hbox{\ft{\sc exp}}(k) \cdot \Log_p(I_k \otimes \Z_p)^-$. 
So, it suffices to  compute $\Log_p(P_k \otimes \Z_p)^-$ 
and to obtain a relation between $\Log_p(I_k \otimes \Z_p)^-$ 
and $\Log_p(P_k \otimes \Z_p)^-$; this relation does exist 
from class field theory interpretation of the cyclic quotient $\Log_p
(I_k \otimes \Z_p)^-/\Log_p(P_k \otimes \Z_p)^-$ (Theorem \ref{Val}\,(i)). 

\begin{lemma}\label{logP}
Let $\CO_k$ be the ring of integers of the algebra 
$\prod_{{\mathfrak p}\mid p} k_{\mathfrak p}$. Then $\CO_k^- = 
\Z_p \, \sqrt{-m}$ and $\Log_p(P_k \otimes \Z_p)^-$ is given by:

\smallskip
$\bullet$ $p \CO_k^- = p\, \Z_p \, \sqrt{-m}$ if $p \nmid m$, 
where $\sqrt{-m}$ is a local unit in that case;

\smallskip
$\bullet$ $\CO_k^- = \Z_p \, \sqrt{-m}$ if $p \mid m$,
$m \not \equiv 3 \!\pmod 9$ when $p=3$, where $\sqrt{-m}$ 
is then an uniformizing parameter; 

\smallskip
$\bullet$ $3 \CO_k^- = 3\, \Z_3 \, \sqrt{-m}$ if $p=3$ and
$m \equiv 3 \!\pmod 9$.
\end{lemma}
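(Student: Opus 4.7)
The plan is to identify $\log_3(P_k \otimes \Z_3)$ with the image under $\log_3$ of the group of principal local units $\prod_{{\mathfrak p} \mid 3} U_{\mathfrak p}^1$, and then to read off its minus component from the explicit $3$-adic power series of $\log$ in each ramification regime. The decomposition $\CO_k = \Z_3 \oplus \Z_3\sqrt{-m}$ is immediate since the possible $2$-adic correction in $O_k$ becomes invertible after $\otimes\Z_3$; complex conjugation fixes $\Z_3$ and negates $\sqrt{-m}$, so $\CO_k^- = \Z_3\sqrt{-m}$ in every case. The identification $\log_3(P_k \otimes \Z_3) = \log_3\bigl(\prod_{{\mathfrak p} \mid 3} U_{\mathfrak p}^1\bigr)$ is then standard: $\log_3$ kills the finite torsion $E_k$, and by weak approximation the prime-to-$3$ elements of $k^\times$ are dense in $\prod_{{\mathfrak p} \mid 3} O_{\mathfrak p}^\times$.

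In Case A ($3 \nmid m$), every ${\mathfrak p} \mid 3$ is unramified, so $\prod U_{\mathfrak p}^1 = 1 + 3\CO_k$, and the standard identity $v_{\mathfrak p}(\log(1+x)) = v_{\mathfrak p}(x)$ for $v_{\mathfrak p}(x) \geq 1$ produces an isomorphism onto $3\CO_k$; the minus part is $3\Z_3\sqrt{-m}$. In Cases B and C ($3 \mid m$), $e = 2$ at the unique prime ${\mathfrak p}$ and $\pi = \sqrt{-m}$ is a uniformizer with $\pi^2 = -3m'$, $\gcd(m',3) = 1$. For $\alpha = a + b\pi \in O_{\mathfrak p}$, the tabulation $v_{\mathfrak p}(\pi^n/n) = n - 2v_3(n)$ shows that only the indices $n = 1, 3$ achieve valuation $1$, while the $n = 2$ term and all $n \geq 4$ terms lie in $3\CO_k$. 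Hence modulo $3\CO_k$,
\[
\log(1 + \pi\alpha) \equiv \pi\alpha - m'\pi\alpha^3 = \pi(\alpha - m'\alpha^3),
\]
and the projection onto the minus factor $\pi\Z_3 = \CO_k^-$ simplifies to
\[
\log(1 + \pi\alpha)^- \equiv a\,(1 - m'a^2)\,\pi \pmod{3\pi\Z_3}.
\]

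In Case B ($-m \not\equiv -3 \pmod 9$, equivalently $m' \equiv 2 \pmod 3$), setting $a = 1$, $b = 0$ yields the unit coefficient $1 - m' \equiv -1 \pmod 3$, so $\log(1+\pi)^-$ generates $\pi\Z_3$ modulo $3\pi\Z_3$; combined with $\log(U^1)^- \subseteq (\pi O_{\mathfrak p})^- = \pi\Z_3$ this gives $\log_3(P_k \otimes \Z_3)^- = \CO_k^-$. In Case C ($-m \equiv -3 \pmod 9$, equivalently $m' \equiv 1 \pmod 3$), Fermat forces $a(1 - m'a^2) \equiv a(1 - a^2) = a - a^3 \equiv 0 \pmod 3$ for every $a \in \Z_3$, hence $\log(U^1)^- \subseteq 3\pi\Z_3$; for the matching lower bound I would evaluate $\log(1 + 3\pi)$, whose $n = 1$ term contributes $3\pi$ and whose $n \geq 2$ terms all have $v_{\mathfrak p} \geq 6$, so its minus part is $3\pi \cdot (1 + O(9))$, a $\Z_3$-generator of $3\pi\Z_3 = 3\CO_k^-$.

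The main obstacle is the bookkeeping in Case C: the apparent leading $\pi$-contribution must vanish modulo $3\pi\Z_3$ for \emph{every} $\alpha$, so the two power-series terms of valuation exactly $1$ (indices $n = 1, 3$) have to be collected together before the cancellation can be declared; and the matching equality requires checking that no higher-order term in the witness $\log(1 + 3\pi)$ revives a unit multiple of $\pi$ at valuation $1$. Both facts follow from the tight valuation formula $v_{\mathfrak p}(\pi^n/n) = n - 2v_3(n)$, but only after a genuine second-order reduction modulo $3\pi\Z_3$ rather than a naive first-order approximation.
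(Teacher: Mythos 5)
Your proof is correct: the identification of $\log_3(P_k\otimes\Z_3)^-$ with $\log_3\bigl(\prod_{\mathfrak p\mid 3}U^1_{\mathfrak p}\bigr)^-$ via weak approximation, and the valuation bookkeeping $v_{\mathfrak p}(\pi^n/n)=n-2v_3(n)$ isolating the $n=1,3$ terms so that the Fermat cancellation $a-m'a^3\equiv 0\pmod 3$ distinguishes $m'\equiv 1$ from $m'\equiv 2\pmod 3$, all check out, including the witness $\log(1+3\pi)$ for the lower bound in the third case. The paper states this lemma without proof, and what you have written is exactly the standard local computation it implicitly relies on, so there is nothing to contrast.
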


\begin{proof}
The computation is immediate since $\Log_p(P_k \otimes \Z_p) =
\log_p(\CU_k^1) = \pi\, \CO_k$, $\pi \in \{p, \sqrt{-m}\}$ when
$p \nmid m$ (resp. when $p \mid m$) with the relation $\CU_k^1 = 
\mu_3 \oplus \CU_k^2$ if $p=3$ and $m \equiv 3 \!\pmod 9$ which implies 
$\log_3(\CU_k^1)^- = \log_3(\CU_k^2)^- = (\log_3(1+3\CO_k))^- =
3 \CO_k^- = 3\, \Z_3 \, \sqrt{-m}$.
\end{proof}

\subsection{Description of the method \texorpdfstring{($p = 3$)}{Lg}}
We assume $p=3$ since Kummer radicals are needed.
One proceeds as follows to find prime ideals ${\mathfrak q}$
such that $\Log_3({\mathfrak q})^- \in 3 \,\Log_3(I_k \otimes \Z_3)^-$:

\begin{algorithm}\label{method}{\rm
(i) This algorithm uses a large enough modulus ${3^{\hbox{\tiny \sc Val}}}$, 
computed with classical invariants of $k$, to ensure that in the writing
$\log_3(\beta) = C_0+C_1\,\sqrt{-m} \in \Z_3[\sqrt{-m}]$, hence
$\log_3(\beta)^- = C_1\,\sqrt{-m}$, the condition ${\bf v}_3^{}(C_1) 
\geq \hbox{\sc Val}$, in terms of $3$-adic valuations ${\bf v}_3^{}$, 
is sufficient to imply the condition $\Log_3({\mathfrak q})^- \in 3 \times 
\Log_3(I_k \otimes \Z_3)^-$ (see Schema \eqref{schemalog});  
whence giving Artin symbols $\Big (\ffrac {H_k'^\pr/k}{{\mathfrak q}}\Big)
\in \Gal(H_k'^\pr/k_1^\acyc)$ for ${\mathfrak q} \mid q$, $q \in \GI$, 
where $\GI$ is any interval of primes split in $k$. In other words, all these
${\mathfrak q}$'s split in $k_1^\acyc$.

\smallskip
(ii) One hopes that some ${\mathfrak q}$'s are inert in another tested cubic 
field $K_1 \subset H_k^\pr$, thus eliminating the corresponding radical 
$w \in W_{\chi^*}$ (inertia equivalent to the irreducibility in $\Q_q[x]$ 
of the polynomial defining $K_1$ and denoted ${\sf Qq}$ in the 
programs). Then (if any) the {\it unique} solution ${w^\acyc}$ 
defines ${k_1^\acyc}$, because all ideals ${\mathfrak q}$, 
such that $\Log_3({\mathfrak q})^- \in 3 \,\Log_3(I_k \otimes \Z_3)^-$, 
are split in this selected cubic field, but not all split in the other cubic fields.

\smallskip
Therefore, ${w^\acyc}$ passed the decomposition test for 
all ${\mathfrak q}$, while each of the others ${w}$'s failed for at
least an ideal ${\mathfrak q}$. This can be seen 
when the program is running, because when $w = w^\acyc$, all 
primes $q \in \GI$ are tested, which takes a noticeably longer time. 

\smallskip
(iii) It is essential to prove that this may define a unique solution which is 
not some $w \ne w^\acyc$ defining $K_1 \ne k_1^\acyc$. If so, this means that 
there exists $q \in \GI$, such that $\Big (\ffrac {H_k'^\pr/k}{{\mathfrak q}}\Big)$ 
fixes only $K_1$ and then is such that ${\mathfrak q}$ is inert in $k_1^\acyc/k$ (so 
wrongly eliminating $w^\acyc$).
Indeed, assume that this occurs; since $\Gal(k^\acyc/k) \simeq \Z_3$, it 
follows that ${\mathfrak q}$ is totally inert in $k^\acyc/k$,  
that $\Log_3({\mathfrak q})^-$ is a generator of $\Log_3(I_k \otimes \Z_3)^-$
and does not belong to $3 \, \Log_3(I_k \otimes \Z_3)^-$ (absurd). Whence 
the significance of the parameter $\hbox{\sc Val}$ (Theorem \ref{Val}\,(ii)). 
Experiments show that (due to Chebotarev's density Theorem) such primes 
$q$ are small and numerous.

\smallskip
(iv) At the end of a running, the program gives the list ${\sf List\Sigma}$
of the $m$'s for which there is several (wrong) solutions, because the 
interval $\GI$ of auxiliary primes $q$ is not sufficient, so that point (iii) 
does not apply; in practice, this list is empty, otherwise $\GI$ must be 
enlarged.
The programs may be used for any selected list or any interval 
of integers $m$ defining $k$.}
\end{algorithm}

\begin{algorithm}\label{abcd}{\rm
To test capitulations in $k_1^\acyc$ of some classes of $k$,
the principle is to use the relation given by the 
algebraic norm on the set of finite places of a number field:
\begin{equation}\label{JN}
\BNu_{k_1^\acyc/k} = \BJ_{k_1^\acyc/k} \circ \BN_{k_1^\acyc/k},
\end{equation}
where $\BJ$ is the transfer map (or extension of ideals) and $\BN$ the 
usual arithmetic norm. In the totally ramified case, $\BN_{k_1^\acyc/k}$ 
is surjective by class field theory, and $\BNu_{k_1^\acyc/k}
(\CH_{k_1^\acyc}) = \BJ_{k_1^\acyc/k}(\CH_k)$; so, partial capitulation 
depends on the data obtained with the algebraic norm.

\smallskip
In the case $k_1^\acyc/k$ unramified, $\BN_{k_1^\acyc/k}(\CH_{k_1^\acyc}) 
=: \CH'_k \simeq \Gal(H_k^\nr/k^\acyc \cap H_k^\nr)$ is a subgroup of $\CH_k$
of index $3$, and partial capitulation (from $\CH'_k $) may exist.

\smallskip
The program computes 
$\BNu_{k_1^\acyc/k}(\CH_{k_1^\acyc})$, using the fact that {\sc pari/gp} 
gives representative ideals of the generating classes from the instruction 
${\sf kacyc.clgp}$ and allows to conjugate these ideals ${\sf X}$ via 
${\sf nfgaloisconj(kacyc)}$ describing $G_1 = \Gal(k_1^\acyc/k)$ and 
${\sf nfgaloisapply(kacyc,s,X)}$ giving ${\sf X^s}$; a line, in the output of the form (for 
example with $4$ generators $h_i^\acyc$):

\smallskip
\begin{verbatim}
Norm of the component i of H_kacyc: [a,b,c,d]
\end{verbatim}

\smallskip\noindent
denotes the class $(h_1^\acyc)^a\, (h_2^\acyc)^b\, 
(h_3^\acyc)^c\, (h_4^\acyc)^d$ written on the generators 
$h_i^\acyc$ defining the class group $\BH_{k_1^\acyc}$ of 
$k_1^\acyc$, from the instruction ${\sf kacyc.cyc}$ giving
the orders ${\sf [a,b,c,d]}$ of the $h_i^\acyc$'s,
and ${\sf bnfisprincipal(kacyc,Y)}$, ${Y = \BNu_{k_1^\acyc/k}(h_i^\acyc)}$,
which gives the structure of $\BNu_{k_1^\acyc/k}(\BH_{k_1^\acyc})$ 
to be compared with that of $\BH_k$, at the level of the $3$-Sylow's.

\smallskip
For this, the obtained matrix may be simplified by linear combinations
on the lines to get the structure, then the order, of the image. 

\smallskip
An important remark is that {\sc pari/gp} uses random prime numbers 
in some delicate computations, so that numerical results may vary 
during several runs of the program, but generators, matrices, etc. are 
equivalent; of course this also depends on the {\sc pari/gp} version}.
\end{algorithm}

For practical computations, we have the following results:

\begin{proposition}\label{Q}
Let $w \in W_{\chi^*}$ (of conjugate $w'$) be the radical associated to 
a cyclic cubic field $K_1$, contained in $H_k^\pr$, and decomposed 
over $k$. Put $w = \frac{1}{2}(u+v \sqrt{3m})$ if $3 \nmid m$ (resp. 
$w = \frac{1}{2}(u+v \sqrt{m/3})$ if $3 \mid m$), $u,v \in \Z$.
The field $K_1$ is given, over $k$, by the irreducible polynomial 
of $\sqrt[3]{w}+\sqrt[3]{w'}$, $Q = x^3-3a x-t \in \Q[x]$, where $a^3 = w w'$ 
and $t = w+w'$. 
\end{proposition}

Nevertheless, some circumstances are tricky for the {\sc pari/gp}
determination of the $w$'s and we will define the following four 
cases regarding $k^*$: 

\begin{definition}\label{fourcases}
We distinguish the following arithmetic properties of the
reflection $k^*$ of $k$ in the mirror involution, where 
${\mathfrak p}^* \mid 3$ if $3$ splits in $k^*$:

\smallskip
(i) $m \not\equiv 3 \pmod 9$, $\CH_{k^*} \ne 1$ (Non Split case:
$3$ is inert or ramified in $k^*/\Q$);

\smallskip
(ii) $m \equiv 3 \pmod 9$, $\CH_{k^*} \ne 1$,
$\Ccl ({\mathfrak p}^*) \not\in \CH_{k^*}^3$ (Normal Split case:
$3$ splits in $k^*$ and $\Ccl ({\mathfrak p}^*)$ is ``direct factor'');

\smallskip
(iii) $m \equiv 3 \pmod 9$, $\CH_{k^*} \ne1$,
$\Ccl ({\mathfrak p}^*) \in \CH_{k^*}^3$ (Special Split case:
$3$ splits in $k^*$ and $\Ccl ({\mathfrak p}^*)$ is ``non direct factor'', 
including the case where ${\mathfrak p}^*$ is $3$-principal);

\smallskip
(iv) $\CH_{k^*}=1$, whatever the decomposition of $3$ in $k^*$ 
(Trivial case).
\end{definition} 

We see that every square-free $m \ne 3$ belongs to one and only one 
of the $4$ categories. To be more efficient, we will use four programs, 
all the more that Definition \ref{fourcases} will be characterized by means 
of relations between the $3$-ranks of the classical invariants of $k$ and 
will lead to some simplifications, specific of each case (e.g., Theorem 
\ref{disjunction}).

\section{Kummer radicals and characters}\label{characters}
We refer to any book of number theory for Kummer theory, reflection 
principles and classical inequalities of the Spiegelungssatz; some of these 
results are used in the recent literature, as for instance in \cite{HW2010, 
HW2018}.

\smallskip
But some interpretations of these inequalities, giving rise to equalities, 
are less classical; we gave in \cite[I.6, II.(b), II.5.4.9.2]{Gra2005} these 
complements using the notion of $p$-primarity. We recall these aspects 
in the case $p = 3$, $k = \Q(\sqrt{-m})$ and $M = k(\mu_3^{})$.

\subsection{Characters of \texorpdfstring{$M$}{Lg}}
If $\psi$ is an abelian character of $M$, $M_\psi$ denotes the cyclic subfield 
of $M$ fixed by $\Ker(\psi)$. 

\smallskip
Let $\chi$ be the odd character defining $k = M_\chi$, let
$\omega_3$ be the character of $\Q(\mu_3)$ and let $\chi_0^{}$ be the unit 
character defining $\Q$. Then $M$ contains the field $k^* = 
\Q(\sqrt{3m})$ (or $\Q(\sqrt{m/3})$), of even character $\chi^* = \omega_3 \chi^{-1}$.
If $X_M$ is an arithmetic $\Z_3[\Gal(M/\Q)]$-module attached to $M$, 
then $X_M=X_{\chi_0^{}} \oplus X_{\chi} \oplus X_{\chi^*}\oplus X_{\omega_3}$
where each $\psi$-component
$X_{\psi}$ is attached to $M_\psi$. For $\psi \ne \chi_0^{}$,
$X_{\psi} = \{x \in X_{M_\psi},\ \, \BNu_{M_\psi/\Q} (x) = 1\}$ and 
$X_{M_\psi} = X_{\psi} \oplus X_{\chi_0}$ .

\subsection{Units, pseudo-units and \texorpdfstring{$S$}{Lg}-units -- Radicals}

Let $M$ be a number field containing $\zeta_p$, a root of unity of order $p$. 
By definition of the $p$-ramification, the corresponding Kummer radical $W_M$ 
is the sub-module of $M^\times/M^{\times p}$, whose representative elements 
$w$ are such that:
$$(w) = {\mathfrak a}^p \cdot {\mathfrak a}_p, $$
with a prime-to $p$ ideal ${\mathfrak a}$ and an ideal ${\mathfrak a}_p$ in 
$\langle S_M \rangle$, where $S_M := \{ {\mathfrak p} \mid p, \  {\rm in}\  M \}$;
${\mathfrak a}$ and ${\mathfrak a}_p$ are unique. So, $M(\sqrt[p]{w})/M$
is a $p$-ramified cyclic extension of degree $1$ or $p$; $M(\sqrt[p]{W_M})/M$
defines the first layer $H_{M,1}^\pr/M$ of $H_M^\pr/M$. 

\smallskip
The $S_M$-units are the elements $\eta \in M^\times$ such that $(\eta) \in 
\langle S_M \rangle$; the group of $S_M$-units is of the form 
$E_M^S = \langle \eta_1, \ldots, \eta_{\hbox{\tiny ${\order S_M}$}}^{} \rangle 
\oplus E_M$ and its quotient $E_M^S/(E_M^S)^p$ is of $\F_p$-dimension 
$\order S_M+r_1+r_2$ (with $r_1+ 2r_2 = [M : \Q]$) since $\mu_p \in E_M$.

\begin{definition}\label{pseudo-units}
Let $\BH_M$ be the class group of a number field $M$ containing $\zeta_p$, 
and put:
$$\BH_M =: \big\langle \Ccl ({\mathfrak a}_1)\big\rangle \oplus 
\cdots \oplus \big\langle \Ccl ({\mathfrak a}_r)\big \rangle, $$ 
with representatives ${\mathfrak a}_i$, taken prime to $p$; for 
each $i$, put ${\mathfrak a}_i^{{e_i}} =: (\alpha_i)$, 
$\alpha_i \in M^\times$, where ${e_i}$ is the order of 
$\Ccl ({\mathfrak a}_i)$. Let $E_M := \mu_p^{} \oplus \big \langle 
\varepsilon_1, \ldots, \varepsilon_{r_1+r_2-1} \big\rangle$ be the 
group of units given by a system of fundamental units of $M$.
The group:
$$Y_M := \mu_p^{} \oplus \big\langle \varepsilon_1, \ldots, 
\varepsilon_{r_1+r_2-1}, \ \alpha_1, \ldots, \alpha_r \big\rangle $$ 
(defined modulo $p^{\rm th}$-powers, whence to be read as the 
$\F_p$-vector space $Y_M M^{\times p}/M^{\times p}$, whose 
$\F_p$-dimension is $r_1+r_2+r$) is called the group of pseudo-units 
of $M$.
\end{definition}

When $p=3$ and $M=k(\mu_3)$, the sub-radical of 
$W_M$ giving by descent over $k$ the $\chi$-component of the first 
layer $H_{k,1}^\pr$ (excluding $H_{\Q,1}^\pr$) is 
$W_{\chi^*} := (W_M)_{\chi^*} = (W_{k^*})_{\chi^*} $.

\smallskip
The elements $w \in W_{k^*}$ are such that there exist unique ideals 
${\mathfrak a}^*$, prime to $3$, and ${\mathfrak a}^*_3 \in 
\langle S^* \rangle$, where $S^* := S_{k^*}$, the set of $3$-places 
of $k^*$, such that $(w) = {\mathfrak a}^{* 3}\, {\mathfrak a}^*_3$.
When ${\mathfrak a}^*_3 = 1$, $w$ is a pseudo-unit, when 
${\mathfrak a}^* = 1$, $w$ is a $S^*$-unit, and if ${\mathfrak a}^*_3 
= {\mathfrak a}^* = 1$, then $w$ is a unit of $k^*$.  

\begin{definition}
An element $w$ of $W_{k^*}$ is said to be ``{\it fundamental}'' if 
$\langle w \rangle$ is of character $\chi^*$, whence if
$\BNu_{k^*/\Q}(w) = 1$ in $k^{* \times}/k^{* \times 3}$;
if so, we denote it by $w^*$ instead of $w$.
This defines the Kummer radical $W_{\chi^*}$ which is
essentially made up of $Y_{k^*} = \big\langle\,\varepsilon^*, 
\alpha_1^*, \ldots, \alpha_{r^*}^* \, \big\rangle$, where 
$\varepsilon^*$ is the fundamental unit of $k^*$, and,
possibly, of the two following elements under the assumption
that $3$ splits in $k^*$ ($m \equiv 3 \pmod 9$):

\smallskip
(i) the fundamental $S^*$-unit, $\eta^*$, if ${\mathfrak p}^*{}^{3^e \cdot u}
= (\eta^*)$, $e \geq 1$, $3 \nmid u$, and 
$\eta^* \notin k^{* \times 3}$; if ${\mathfrak p}^{* u} = (\eta)$
($e=0$), ${\mathfrak p}^*$ is $3$-principal, $\eta$ is not 
fundamental and will be a particular case of (ii);

\smallskip
(ii) the tricky element
$w_{{\mathfrak p}^*}^* := w_{{\mathfrak p}^*} \cdot
\BNu_{k^*/\Q}(w_{{\mathfrak p}^*})$, 
where $w_{{\mathfrak p}^*}$ is characterized by the relation 
(equivalent to $\Ccl({\mathfrak p}^*) \in \CH_{k^*}^3$):
\begin{equation} \label{tricky}
{\mathfrak p}^* = (w_{{\mathfrak p}^*}) \cdot {\mathfrak a}^{* 3}.
\end{equation}

Indeed, from this relation, we note that $\BNu_{k^*/\Q}(w_{{\mathfrak p}^*})$ 
is not in $k^{* \times 3}$; so we must instead consider $w_{{\mathfrak p}^*}^*$; 
this is crucial only for Program III. See the discussion in proof of Theorem \ref{wp}.
\end{definition}

\begin{example}
The following example shows how to avoid a possible trap in 
very rare cases, due to the fact that, if the theoretical reasonnings
work in $\CH_k = \BH_k \otimes \Z_p$, {\sc pari/gp} works only
in $\BH_k$, especially by means of the instruction
${\sf bnfisprincipal}$ testing {\it global principalities} and, if so,
giving a generator.
Let $m=146766$; in Program III, ${\sf Sideal}$ is 
$({\mathfrak p}^*)^{\hbox{\tiny{\sc expta}}(k^*)}$, where
$\hbox{\ft{\sc expta}}(k^*)$ (denoted ${\sf exptakstar}$ 
in the programs), is the tame part of the exponent of $\BH_{k^*}$: 

\smallskip
\ft\begin{verbatim}
m=146766 H_k=[60,2,2] T_k=[3,3] H_kstar=[6,2] k_1^ac/k is Ramified
Sideal=component(component(idealfactor(kstar,3),1),1);
Sideal=idealpow(kstar,Sideal,exptakstar);
Z=kstar.clgp[3]; \\=[[23,22;0,1],[29,12;0,1]]=generators [h1,h2] of H_kstar
Y=bnfisprincipal(kstar,Sideal)[1]; \\=[0,0] means Sideal principal
d=matsize(Y)[1];Ideal=1;for(i=1,d,Ideal=
idealmul(kstar,Ideal,idealpow(kstar,Z[i],Y[i])));\\=[1,0;0,1]
X=idealdiv(kstar,Sideal,Ideal); \\[9,5;0,1]
c=bnfisprincipal(kstar,X); \\=[[0,0]~,[-560257,2533]~]
Gamma=Mod(kstar.zk[1]*c[2][1]+kstar.zk[2]*c[2][2],Pstar);
wstar=Gamma*norm(Gamma);listput(Lw0,wstar);
\end{verbatim}\ns

Ones obtains $w_{{\mathfrak p}^*} = Mod( 2533*x - 560257,x^2 - 48922)$ 
in ${\sf Gamma}$ such that ${\mathfrak p}^{* 2}  = (w_{{\mathfrak p}^*})$.
The program finds the solution for the radical corresponding to $k_1^\acyc$:
$${\sf w =Mod(156253399633432764*x - 34560624129365642229, x^2 - 48922)} 
= w_{{\mathfrak p}^*}^* \cdot \varepsilon^*. $$

If the instruction ${\sf Sideal=idealpow(kstar,Sideal,exptakstar)}$
is omitted, the program does not find any solution among the $13$
radicals and gives the data:

\ft\begin{verbatim}
Y=bnfisprincipal(kstar,Sideal)[1]=[3,1]~
Ideal=[352843,295319;0,1]
X=[3,57524/352843;0,1/352843]
c=[[0,0]~,[-24780479/352843,-112036/352843]~]
Gamma=Mod(-112036/352843*x-24780479/352843,x^2-48922)
wstar=Mod(-336108/124498182649*x-74341437/124498182649,x^2-48922)
\end{verbatim}\ns

${\sf Y=[3,1]}$ means ${\mathfrak p}^* = h_1^3 \cdot h_2$
where the generators $h_i$ of $\BH_{k^*}$ are of order $6$
and $2$, respectively; so this confirms that  ${\mathfrak p}^*$
is $3$-principal, but of order $2$ in the whole class group. This gives 
${\sf w_{{\mathfrak p}^*}^* = Mod(-112036/352843*x-24780479/352843,
x^2-48922)}$ of norm $3 \times 23^{-3} \times 29^{-1}$, in other words,
${\mathfrak p}^* = (w_{{\mathfrak p}^*}^*) \,{\mathfrak l}_{23}^3 \,
{\mathfrak l}_{29}$, which does not define $w_{{\mathfrak p}^*}$
correctly.
\end{example}

\subsection{Classification of the Kummer radicals}\
The nature of the radicals may be characterized as 
follows with the arithmetic properties of $k$ and $k^*$
(cf. Definition \ref{fourcases}):

\smallskip
(i) The Non Split case ($3$ non-split in $k^*$, equivalent to 
$m \not\equiv 3 \!\!\pmod 9$, and $\CH_{k^*}\ne 1$). 
If $r^* \!= \rk(\CH_{k^*})$, $W_{\chi^*}$ is the group of 
pseudo-units $Y_{k^*}$ (Definition \ref{pseudo-units}): 
$$W_{\chi^*} = Y_{k^*} = \big\langle\,\varepsilon^*, \alpha_1^*, 
\ldots, \alpha_{r^*}^* \, \big\rangle. $$ 
Note that $Y_{k^*}$ (as many other modules) is equal to its 
$\chi^*$-component (fundamental elements). Since $3$ does 
not split in $k^*$, $S^*$-units, $S^*$-ideals do not intervene.

\smallskip
(ii) The Normal Split case ($m \equiv 3 \pmod 9$ with $\CH_{k^*}\ne 1$ 
and $\Ccl ({\mathfrak p}^*) \not \in \CH_{k^*}^3$). Then, $W_{\chi^*}$ is 
generated by $Y_{k^*}$ ({\sc pari/gp} decomposes $\BH_{k^*}$ into cyclic 
factors $\Ccl({\mathfrak a}_i^*)$ with random ${\mathfrak a}_i^*$'s, but one 
can take prime-to-$p$ representatives of the $\Ccl({\mathfrak a}_i^*)$'s).  
We will see that the fundamental $S^*$-unit is equivalent to a pseudo-unit,
which still yields $W_{\chi^*} = Y_{k^*}$ (see Theorem \ref{wp}\,(i)).

\smallskip
(iii) The Special Split case, $m \equiv 3 \!\!\pmod 9$, $\CH_{k^*} \ne 1$,
and $\Ccl ({\mathfrak p}^*) \in \CH_{k^*}^3$, which modifies the 
Kummer radicals by introducing $w_{{\mathfrak p}^*}^* :=
w_{{\mathfrak p}^*}^* \cdot \BN_{k */\Q}(w_{{\mathfrak p}^*})$, where 
${\mathfrak p}^* = (w_{{\mathfrak p}^*}) \cdot {\mathfrak a}^{* 3}$, so that
$w_{{\mathfrak p}^*}^*$ replaces the fundamental $S^*$-unit $\eta^*$ 
(cf. Theorem \ref{wp}\,(ii)). 

\smallskip
(iv) The Trivial case $\CH_{k^*}=1$; if so, there are two possibilities: 

\smallskip
\quad $\bullet$  $3$ does not split in $k^*$; thus the radical 
solution is the fundamental unit $\varepsilon^*$ of $k^*$.

\smallskip
\quad $\bullet$  $3$ splits in $k^*$ ($m \equiv 3 \pmod 9$); then the 
Kummer radical is generated by the fundamental unit $\varepsilon^*$ and 
the fundamental $S^*$-unit $\eta^*$ (the element $w_{{\mathfrak p}^*}^*$ 
does not intervene since the defining relation becomes ${\mathfrak p}^* 
= (w_{{\mathfrak p}^*}) \cdot (\alpha^{* 3}) = (\eta^*)$, hence $w_{{\mathfrak p}^*} 
= \eta^*$ in $k^{* \times}/k^{* \times 3}$).

\smallskip
In the following subsections, we describe the simplifications arising,
in the abelian $p$-ramification theory, for the particular context
of an imaginary quadratic field with $p \geq 3$.

\subsection{Abelian \texorpdfstring{$p$}{Lg}-ramification over
\texorpdfstring{$k$, $p \geq 3$}{Lg}} \label{3ramif}

Our context leads to the following schema of the maximal 
abelian pro-$p$-extension $k^\ab$ of $k$, where 
$H_k^\nr$ is the $p$-Hilbert class field and $H_k^\ta$ 
the maximal abelian tamely ramified pro-$p$-extension.
In \cite[III.4.4.1, III.2.6.1\,(Fig.\,2)]{Gra2005}, it is proved that 
$\Gal(k^\ab/H_k^\pr H_k^\ta) \simeq E_k \otimes \Z_p$. 
In the imaginary quadratic case, $E_k \otimes \Z_p \simeq \mu_p(k)$;
for $p \ne 2$, $E_k \otimes \Z_p = 1$ except for $p = 3$ and 
$k = \Q(\mu_3)$ that we exclude once for all:

\unitlength=1.2cm
\begin{equation}\label{schema1}
\begin{aligned}
\vbox{\hbox{\hspace{1.0cm}
\begin{picture}(9.0,2.0)
\put(1.6,2.0){\line(1,0){3.5}}
\put(2.6,1.75){\tiny $p$ unramified}
\put(1.6,0.18){\line(1,0){3.9}}
\put(-0.7,0.18){\line(1,0){1.4}}
\put(1.0,0.5){\line(0,1){1.20}}
\put(6.00,0.5){\line(0,1){1.20}}
\put(4.75,1.42){\tiny $p$ split in $k$}
\put(6.1,1.42){\tiny$\CU_k^1 = \CU^1_{\mathfrak p}
  \oplus \CU^1_{{\mathfrak p}'} \simeq (1+p\Z_p)^2$}
\put(4.7,1.05){\tiny $p$ inert in $k$}
\put(6.1,1.05){\tiny$\CU^1_k = 1+ p (\Z_p + \Z_p \sqrt {-m})$}
\put(4.35,0.7){\tiny $p$ ramified in $k$}
\put(6.1,0.7){\tiny$\CU^1_k = 1+ \sqrt{-m} (\Z_p + \Z_p \sqrt {-m})$}
\put(-0.1,1.45){\tiny$\CI_{\mathfrak p}
  \oplus \CI_{{\mathfrak p}'}$}
\put(-0.3,1.1){\tiny$\CI_p(H_k^\pr/k)$}
\put(-0.3,0.7){\tiny$\CI_{\mathfrak p}(H_k^\pr/k)$}
\put(5.2,1.9){$H_k^\pr H_k^\ta=k^\ab$}
\put(0.8,1.9){$H_k^\pr$} 
\put(5.8,0.1){$H_k^\ta$}
\put(0.8,0.1){$H_k^\nr$}
\put(-1.0,0.1){$k$}
\end{picture} }} 
\end{aligned}
\end{equation}
\unitlength=1.0cm

\smallskip
$\bullet$\ \, If $p$ splits in $k$, set $(p) = {\mathfrak p}\,{\mathfrak p}'$;
the inertia groups, $\CI_{\mathfrak p}(H_k^\pr/k)$ and $\CI_{{\mathfrak p}'}
(H_k^\pr/k)$, of ${\mathfrak p}$ and ${{\mathfrak p}'}$ in $H_k^\pr/k$, are 
images of the groups of principal local units $\CU^1_{\mathfrak p}$ and 
$\CU^1_{{\mathfrak p}'}$ of the completions $k_{\mathfrak p}$ and 
$k_{{\mathfrak p}'}$ (isomorphic to $\Q_p$), respectively, by the local 
reciprocity maps.
Note that $\CI_{\mathfrak p}(H_k^\pr/k)$ and 
$\CI_{{\mathfrak p}'}(H_k^\pr/k)$, isomorphic to 
$1+p \Z_p$, are free since $\Q_p$ does not contain $\mu_p^{}$.

\smallskip
$\bullet$\ \, If $p > 3$ is inert (resp. ramified) in $k$, or
if $p = 3$ is inert, the inertia group $\CI_p(H_k^\pr/k)$ 
of ${\mathfrak p}$ in $H_k^\pr/k$ is isomorphic to 
$\CU^1_k = 1+p (\Z_p + \Z_p \sqrt {-m})$ 
(resp. $1+\sqrt{-m} (\Z_p + \Z_p \sqrt {-m})$); 
it is isomorphic to $\Z_p^2$ and fixes $H_k^\nr$. 

\smallskip
$\bullet$\ \, If $p = 3$ is ramified, in the case $k_{\mathfrak p} \ne \Q_3(\sqrt{-3})$, 
$\CU^1_k$ is free; in the case $k_{\mathfrak p} = \Q_3(\sqrt{-3})$ (equivalent to 
$m \equiv 3 \pmod 9$), $\CU^1_k \simeq \Z/3\Z \times \Z_3^2$ and $3$ ramifies 
in $H_k^\pr/\wt k$ with ramification index $3$, giving $[H_k^\pr : \wt k H_k^\nr] = 3$
where $\Gal(H_k^\pr/\wt k H_k^\nr) \simeq \CW_k^\bp$.

\subsection{Schema of \texorpdfstring{$H_k^\pr/k$}{Lg}
for \texorpdfstring{$k=\Q(\sqrt{-m})$}{Lg}} \label{sch2}

We refer also to Schema \ref{schemalog}. Let's consider the sub-schema 
of \ref{schema1} about $H_k^\pr/k$, in which some simplifications hold since 
$E_k \otimes \Z_p = 1$ and $k \ne \Q(\mu_3)$ when $p=3$, as the fact that 
whatever the decomposition of $p$ in $k/\Q$, $\Gal(H_k^\pr/H_k^\nr) \simeq 
\CU_k^1$, $\Gal(H_k^\pr/\wt k\,H_k^\nr) \simeq \tor_{\Z_p}^{}(\CU^1_k) 
\in \{1, \mu_3\}$, the normalized $p$-adic regulator $\CR_k$ is trivial; 
then the sub-module $\CW_k^\bp$, fixing the Bertrandias--Payan field 
$H_k^\bp$, is $\tor_{\Z_p}^{}(\CU^1_k)$.
Recall that $\wt k \cap H_k^\nr = k^\acyc \cap H_k^\nr$ and that 
$\CT_k^\bp = \Gal(H_k^\bp/\wt k)$ is isomorphic to a subgroup 
$\CH'_k$ of $\CH_k$.

\smallskip
$\bullet$ \, If $(p) = {\mathfrak p}{{\mathfrak p}'}$ in $k$, it totally splits
in $H_k^\bp/\wt k$; indeed, since the Gross--Kuz'min conjecture holds 
for $k$, the decomposition groups (pro-cyclic) of ${\mathfrak p}$ and 
${{\mathfrak p}'}$ in $H_k^\pr/k$, are isomorphic to $\Z_p$ and 
do not intersect $\CT_k$ non-trivially (see \cite[Lemme 4]
{Jau2024a} or \cite{Jau2024b} about totally $p$-adic number fields). 
The extension $\wt k/k^\cyc$ is unramified and the decomposition 
groups of ${\mathfrak p}$ and ${\mathfrak p}'$ in this extension 
are of finite indices in $\Gal(\wt k/k^\cyc)$; similarly, $\wt k/k^\acyc$
is unramified.

\smallskip
$\bullet$ \, If $p$ does not split in $k$, 
$\CW_k^\bp \simeq \tor_{\Z_p}(\CU^1_p) \simeq \mu_3$ if and only if $p=3$,
$m \equiv 3 \pmod 9$, otherwise $\CW_k^\bp=1$; if $m \equiv 3 \pmod 9$, 
$H_k^\pr/H_k^\bp$ is ramified as seen above. Since ${\mathfrak p} \mid (p)$ 
is $p$-principal in $k$, it totally splits in $H_k^\nr/k$, hence totally splits in 
$\wt k H_k^\nr/\wt k$.

\unitlength=1.0cm 
\begin{equation}\label{schema2}
\begin{aligned}
\vbox{\hbox{\hspace{-3.0cm} 
\begin{picture}(10.0,3.0)
\put(6.6,2.46){\line(1,0){1.45}}
\put(6.6,2.52){\line(1,0){1.45}}
\put(8.8,2.50){\line(1,0){1.9}}
\put(3.8,2.50){\line(1,0){1.6}}
\put(4.15,0.50){\line(1,0){1.3}}
\put(9.2,2.15){\ft$\CW_k^\bp$}
\put(1.55,0.50){\line(1,0){1.45}}
\put(1.3,0.40){$k$}
\bezier{300}(1.38,0.35)(3.5,0.0)(5.6,0.35)
\put(3.2,-0.1){\ft$\CH_k$}
\put(4.6,0.65){\ft$\CH'_k$}
\bezier{400}(3.7,2.7)(7.25,3.3)(10.8,2.7)
\bezier{300}(3.7,2.2)(6.0,1.6)(8.3,2.2)
\put(6.0,1.55){\ft$\CT_k^\bp$}
\put(7.0,3.14){\ft$\CT_k$}
\put(3.50,0.9){\line(0,1){1.25}}
\put(5.7,0.9){\line(0,1){1.25}}
\bezier{350}(6.2,0.5)(8.5,0.6)(10.9,2.3)
\put(8.6,0.75){\ft$\CU^1_k$}
\put(10.85,2.4){$H_k^\pr$}
\put(5.6,2.4){$\wt k H_k^\nr$}
\put(8.15,2.4){${H_k^\bp}$}
\put(6.8,2.15){\ft$\CR_k\! = \!1$}

\put(4.3,2.25){\tiny${\rm split}$}

\put(3.35,2.4){$\wt k$}
\put(5.5,0.4){$H_k^\nr$}
\put(3.05,0.4){$\wt k \!\cap\! H_k^\nr$}
\put(2.35,1.5){\ft $\Z_p \!\times\! \Z_p$}
\end{picture} }} 
\end{aligned}
\end{equation}
\unitlength=1.0cm

\subsection{Equality \texorpdfstring{$\big[\Log_p
(I_k \otimes \Z_p)^- \!: \Log_p(P_k \otimes \Z_p)^- \big] = 
\ffrac{\order \CH_k}{\order \CT_k^\bp}$.
Definition of $\hbox{\sc Val}$}{Lg}}
 
Whatever the decomposition of $p$ in $k/\Q$, one has
linear disjunction of $\wt k$ and $H_k^\nr$ over $k$
(whence that of $k^\acyc$ and $H_k^\nr$), if and
only if $\CH_k \simeq \CT_k^\bp$. More precisely:

\begin{theorem}\label{Val}
Let $k = \Q(\sqrt{-m})$ and let $\hbox{{\sc exp}(k)}$ be the exponent of  
$\CH_k$ for $p \geq 3$.

\smallskip
(i) We have
$\big[\Log_p(I_k \otimes \Z_p)^- : \Log_p(P_k \otimes \Z_p)^- \big] =
\big[k^\acyc \cap H_k^\nr : k \big] = \ffrac{\order \CH_k}
{\order \CT_k^\bp}$. 

(ii)  Let ${\mathfrak q}$ be a prime ideal dividing 
a prime $q$ split in $k$.
Set $\log_p(\beta) = C_0 + C_1 \sqrt{-m}$ in  $\Z_p \oplus \Z_p \sqrt{-m}$, 
hence $\log_p(\beta)^- = C_1 \sqrt{-m}$, where $(\beta)$ is the principal ideal 
${\mathfrak q}^{\hbox{\tiny{\sc exp}}(k) \cdot \hbox{\tiny{\sc expta}}(k)}$.

\smallskip
Then the condition $\Log_p({\mathfrak q})^- \in p\,\Log_p(I_k \otimes \Z_p)^-$ 
holds as soon as (cf. Remark \ref{mainremarks}):
$${\bf v}_p^{}(C_1) \geq \hbox{\sc Val} := {\bf v}_p^{}(\hbox{\ft{\sc exp}}(k))
+{\bf v}_p^{}(\order \CT_k) - {\bf v}_p^{}({\rm h}(k))+2-{\bf v}_p^{}(m). $$
\end{theorem}

\begin{proof}
Using properties of \S\,\ref{sch2} and Schema \ref{schemalog} leads to (i).

(ii) Since $\Gal(k^\acyc \cap H_k^\nr/k)$ is cyclic, 
$\Log_p(I_k \otimes \Z_p)^- = \frac{\order \CT_k^\bp}
{\order \CH_k} \times \Log_p(P_k \otimes \Z_p)^-$, 
whence the condition $\Log_p({\mathfrak q})^- 
\in p \, \Log_p(I_k \otimes \Z_p)^-$, as soon as:
$${\bf v}_p^{}(C_1 \sqrt{-m}) \geq {\bf v}_p^{}(\hbox{\ft{\sc exp}}(k)) +
{\bf v}_p^{}(\order \CT_k^\bp)-{\bf v}_p^{}({\rm h}(k))+1 
+ \min \big[{\bf v}_p^{}(\Log_p(P_k \otimes \Z_p)^-)\big].$$

The computation of $\min \big[{\bf v}_p^{}(\Log_p(P_k \otimes \Z_p)^-)\big]
= \min \big[{\bf v}_p^{}(\log_p(\CU_k^1)^-)\big]$ (Lemma \ref{logP}) gives:
\begin{equation*}
\left \{\begin{aligned}
&{\bf v}_p^{}(p \sqrt{-m}) = 1, \ \hbox{for $p \geq 3$ unramified in $k$},\\
&{\bf v}_3^{}(\sqrt{-m}) = \ffrac{1}{2}, \ \hbox{for $p=3 \mid m$ with
$m \not \equiv 3 \!\pmod 9$}, \\
&{\bf v}_3^{}(3 \sqrt{-m}) = \ffrac{3}{2}, \ \hbox{for $m \equiv 3 \!\pmod 9$} .
\end{aligned}\right.
\end{equation*}

The above valuations of $C_1$ are respectively 
${\bf v}_p^{}(\hbox{\ft{\sc exp}}(k))+{\bf v}_p^{}(\order \CT_k^\bp)-{\bf v}_p^{}({\rm h}(k)) 
+ 1 + (1,0,1)$; since that of $\order \CW_k^\bp$ are $(0,0,1)$ and that of $m$ are 
$(0,1,1)$, the claim follows.
\end{proof}

\begin{remark}
The number $\hbox{\sc Val}$ applies to any prime ideal ${\mathfrak q}$
to test the condition $\Log_p({\mathfrak q})^- \in p\,\Log_p
(I_k \otimes \Z_p)^-$; its definition is optimal regarding this 
condition. For $p=3$, the successive maxima, up to $m=10^7$,
are given by the following list:

\ft\begin{verbatim}
m=23,Val=2   m=107,Val=3   m=586,Val=4   m=3647,Val=5   m=124847,Val=7   m=3238567,Val=8
\end{verbatim}\ns
\end{remark}

\section{Reflection properties between \texorpdfstring{$\CT_k$}{Lg} 
and the radical \texorpdfstring{$W_{\chi^*}$}{Lg}} 
As soon as Kummer theory intervenes, we suppose that $p=3$.

\subsection{Analysis of \texorpdfstring{$\Gal(H_{k,1}^\pr/k)_\chi\,$}{Lg}}

The corresponding Kummer extension
$M(\sqrt[3]{W_{\chi^*}})/M$ is decomposed over $k$ into 
the sub-extension of $H_{k,1}^\pr/k$ of character $\chi$, that is
to say its minus part. 
By definition, $\Gal(M(\sqrt[3]{W_{\chi^*}})/M)$ is isomorphic to 
$\Gal(k_1^\acyc/k) \oplus (\CT_k/\CT_k^3)$; since $\Gal(k_1^\acyc/k) 
\simeq \Z/3\Z$, we have:
\begin{equation}\label{eq1}
\rk(\CT_k) = \rk(W_{\chi^*})-1.
\end{equation}

If $3$ splits in $k^*$ (i.e., $m \equiv 3 \pmod 9$), $\CT_k$ contains the 
sub-module $\CW_k^\bp \simeq \langle \zeta \rangle$ fixing $H_k^\bp$. 
From $3$-adic class field theory \cite{Jau1998, GJN2016}, the ideal or 
id\'elique interpretation of $\zeta$ is well defined (cf. \eqref{zeta}).
Two possibilities arise in the case $m \equiv 3 \pmod 9$, 
$\CT_k = \CT_k^\bp \oplus \CW_k^\bp$ or $\CW_k^\bp \subseteq \CT_k^3$ 
(implying $\CT_k^\bp \ne 1$); then $\rk(\CT_k) = \rk(\CT_k^\bp)+ 1$, or else
$\rk(\CT_k) = \rk(\CT_k^\bp)$.

\smallskip
We intend to decide between $\CW_k^\bp$ direct factor or not of $\CT_k$, 
only using the {\sc pari/gp} computable invariants $\CT_k$, $\CH_k$ 
and $\CH_{k^*}$ (refer to Schema \eqref{schemalog} in which only 
minus components are involved, hence depend on the sub-schema
defined by $k^\acyc/k$ and the subfields $H_k'^\bp$, $H_k'^\pr$):

\begin{theorem}\label{DF}
Let $k = \Q(\sqrt{-m})$, $m \equiv 3 \pmod 9$.
Let's consider two cases:

\smallskip
(i) $k^\acyc$ is linearly disjoint from $H_k^\nr$ over $k$
($k^\acyc/k$ is totally ramified). 
Then $\CW_k^\bp$ is a direct factor of $\CT_k$ 
if and only if $\rk(\CT_k) = \rk(\CH_k)+1$.

\smallskip
(ii) $k^\acyc$ is not disjoint from $H_k^\nr$ over $k$ ($k_1^\acyc/k$ 
is unramified). Then $\CW_k^\bp$ is a direct factor of $\CT_k$ if and 
only if there exists a radical $w \in W_{\chi^*}$, of conjugate $w'$, 
defining a ramified cubique extension $K_1$ of $k$ in $H_k^\pr$.
Put, using {\sc pari/gp} notations, ${\sf R = polcompositum(Q,x^2+m)[1]}$
where ${\sf Q = x^3-3*a*x-t}$ (${\sf a^3 = w*w'}$, ${\sf t = w+w'}$); then 
$\CW_k^\bp$ is a direct factor of $\CT_k$ if and only if ${\bf v}_3^{}
({\sf nfdisc(R)})$  is strictly larger than $3$.
\end{theorem}

\begin{proof}
In case (i), we have $\CH_k \simeq \CT_k^\bp$
(see Schema \eqref{schemalog}), whence the claim.

\smallskip
In case (ii), $k^\acyc$ being non-disjoint from $H_k^\nr$, one
sees, using the Schema \ref{schemalog}, that $\CW_k^\bp$ is a direct factor of $\CT_k$ 
if and only if there exists a {\it ramified} cubic extension $K_1=F_0$ of $k$ contained
in $H_k'^\pr$; indeed, since $\Gal(H_k'^\bp/H_k^\nr)$ is free and $H_k^\nr/k$
unramified, any such cubic extension will give by composition with 
$H_k'^\bp$ a direct compositum $H_k'^\bp F_0 = H_k'^\bp\, k^\acyc \,F_0
= H_k'^\bp \,F''_0 = H_k'^\pr$ and reciprocally.
The ``discriminant formula'' 
${\mathcal D}_{K_1/\Q} = \BN_{k/\Q}({\mathcal D}_{K_1/k}) \cdot 
{\mathcal D}_{k/\Q}^3$ shows that $K_1/k$ is ramified if and only if 
${\bf v}_3^{}({\mathcal D}_{K_1/\Q}) = e$, $e > 3$, since 
${\bf v}_3^{}({\mathcal D}_{k/\Q}) = 1$.
\end{proof}

In the case (ii), if $\CW_k^\bp$ is not direct factor of $\CT_k$ ($\CW_k^\bp 
\subseteq \CT_k^3$), $F_0$ does not exist as ramified extension in the 
schema and $F'_0 = H_k^\nr$; then the first layer $H_{k,1}^\pr$ of 
$H_k^\pr$ is contained in $H_k^\nr$. These cases occur when all the 
cubic extensions of $k$ in $H_k^\pr$ are unramified ($m=5142$, $6690$, 
$6789$, $7977$, $8751$, $10173,\,\ldots$).

\subsection{Spiegelungssatz from Kummer theory}

Recall that the Scholz--Leopoldt Spiegel\-ungssatz is given by the 
inequalities $\rk(\CH_k)-1 \leq  \rk(\CH_{k^*}) \leq \rk(\CH_k)$, that 
may be specified as follows, by introducing the $3$-primary radicals
giving rise to suitable equalities:

\begin{theorem} \label{primarity}
Let $Y_{k^*} := 
\langle \varepsilon^*, \alpha_1^*, \ldots, \alpha_{r^*}^* \rangle 
\subseteq W_{\chi^*}$ be the sub-group of pseudo-units of $k^*$ 
(Definition \ref{pseudo-units}), and let $Y_{k^*}^\prim := 
\big \{\alpha \in Y_{k^*},\ \alpha \equiv u^3\! \pmod {3 \sqrt{-3}},
\hbox{in $M$}  \big\}$, be the subgroup of $3$-primary elements 
(i.e., congruent to a cube modulo $3 \sqrt{-3}$ when seen in $M$ 
and giving the unramified cubic extensions of $k$). 
Then we have the general formula \cite[II.5.4.9.2]{Gra2005}:
\begin{equation} \label{reflection}
\left \{ \begin{aligned}
\rk(\CH_k)-\rk(\CH_{k^*}) & = 1- \rk(Y_{k^*}/Y_{k^*}^\prim) \\  
& =  \rk(Y_k/Y_k^\prim) \ \in\  \{0, 1\}.
\end{aligned}\right.
\end{equation}
\end{theorem}

Note that if for instance $Y_{k^*}^\prim \ne Y_{k^*}$, there exists $\beta \in Y_{k^*}$,
non-congruent to a cube modulo $3 \sqrt{-3}$ in $M$, such that
$Y_{k^*} = \langle \varepsilon^*, \alpha_1^*, \ldots, \alpha_{r^*}^* \rangle =
Y_{k^*}^\prim \oplus \langle \beta \rangle$.

\subsection{Computation of \texorpdfstring{$\rk(W_{\chi^*})$}{Lg} from 
\texorpdfstring{$S^*$-class groups}{Lg} and \texorpdfstring{$S^*$-units}{Lg}}

We have the following exact sequences of $\chi^*$-modules, 
where $E_{k^*}^{S^*} = \langle \varepsilon^*, \eta^* \rangle$ 
is the group of fundamental $S^*$-units of~$k^*$ (i.e., of character 
$\chi^*$; so $\eta^*=1$ if $3$ does not split in $k^*$), $\CH_{k^*}^{S^*} 
:= \CH_{k^*}/\Ccl (S^*)$ the $S^*$-class group of $k^*$, and 
$\CH_{k^*}^{S^*}{}^{\sst[3]} := \{c \in \CH_{k^*}^{S^*},\ c^3 = 1\}$
whose $3$-rank is that of $\CH_{k^*}/\Ccl (S^*)$:
\begin{equation}
\left\{\begin{aligned}\label{exact}
(1) \ \ \   &1 \too E_{k^*}^{S^*}/(E_{k^*}^{S^*})^3 \tooo W_{\chi^*} 
\tooo \CH_{k^*}^{S^*}{}^{\sst[3]} \too 1,\\
(2) \ \ \   &1 \too \Ccl (S^*)/\Ccl (S^*) \cap \CH_{k^*}^{3} 
\tooo \CH_{k^*}/\CH_{k^*}^3
\tooo \CH_{k^*}/\CH_{k^*}^3 \cdot \Ccl (S^*) \too 1;
\end{aligned}\right.
\end{equation}

\noindent
in (1), one associates with $w^* \in W_{\chi^*}$, $\Ccl ({\mathfrak a}^*) 
\in \CH_{k^*}$, in the relation $(w^*) = {\mathfrak a}^*_3 \cdot 
{\mathfrak a}^{* 3}$; it is annihilated by $3$ in $\CH_{k^*}^{S^*}$. 
All these Galois modules are equal to their $\chi^*$-components.

\medskip
Let $\rho^{}_{S^*}$ be the $3$-rank of $\Ccl (S^*)/\Ccl (S^*) \cap \CH_{k^*}^{3}$
in (2); if $3$ does not split in $k$, $\rho^{}_{S^*} = 0$.
In Definition \ref{fourcases} we have called Normal Split case the case where 
$m \equiv 3 \pmod 9$ and $\Ccl ({\mathfrak p}^*) \notin \CH_{k^*}^3$ (whence 
$\rho^{}_{S^*} = 1$) and Special Split case when $m \equiv 3 \pmod 9$ and 
$\Ccl ({\mathfrak p}^*) \in \CH_{k^*}^3$ (whence $\rho^{}_{S^*} = 0$). 
We obtain:
\begin{equation}\label{Wclass}
\left\{\begin{aligned}
\rk(W_{\chi^*}) &= \rk \big (E_{k^*}^{S^*}/(E_{k^*}^{S^*})^3 \big) +
\rk(\CH_{k^*}/\CH_{k^*}^3 \cdot \Ccl (S^*)) \\
& = \rk \big (E_{k^*}^{S^*}/(E_{k^*}^{S^*})^3 \big)+\rk(\CH_{k^*})-\rho^{}_{S^*}.
\end{aligned} \right.
\end{equation}

The rank of the $\chi^*$-component of the group of fundamental $S^*$-units is the sum 
of the rank of the unit group of $k^*$ (equal to $1$, being of character $\chi^*$) 
and of the $\chi^*$-component of $\langle S^* \rangle$, equal to $1$ if $3$ 
does not split in $k^*$, equal to $2$ if $3$ splits. 

\begin{theorem}\label{TH}
We have, from \eqref{eq1} and \eqref{Wclass}, 
the general formulas for $p=3$:
\begin{equation*}
\left\{\begin{aligned}
(1) \ \ \  \rk(\CT_k)& = \rk(\CH_{k^*}), 
&&\hbox{if $m \not\equiv 3 \pmod 9$}; \\
(2) \ \ \  \rk(\CT_k) &= \rk(\CH_{k^*}), 
&&\hbox{if $m \equiv 3 \pmod 9$ and $\Ccl ({\mathfrak p}^*) \notin \CH_{k^*}^3$};\\
(3) \ \ \ \rk(\CT_k) &= \rk(\CH_{k^*})+1, 
&&\hbox{if $m \equiv 3 \pmod 9$ and $\Ccl ({\mathfrak p}^*) \in \CH_{k^*}^3$}.
\end{aligned} \right.
\end{equation*}
\end{theorem}

These rank formulas have some importance for computations, but 
the structures of $\CH_{k^*}$ and $\CH_k$ are easily given by 
{\sc pari/gp}; moreover, we will need to check the two equalities 
$\rk(\CH_{k^*})+1 = \rk(\CH_k)$ or 
$\rk(\CH_{k^*}) = \rk(\CH_k)$ of the Spiegelungssatz:
\begin{equation}\label{spiegel}
\left\{\begin{aligned}
&\hbox{(1)\ \ \  $\rk(\CH_{k^*})+1 = \rk(\CH_k)$ holds if 
and only if $Y_{k^*}^\prim = Y_{k^*}$}. \\
&\hbox{(2)\ \ \  $\rk(\CH_{k^*}) = \rk(\CH_k)$ holds if and only if
$\rk(Y_{k^*}/Y_{k^*}^\prim) = 1$.}
\end{aligned} \right.
\end{equation}

\medskip 
Let's give some consequences for the Kummer radical of $k_1^\acyc/k$,
using Theorem \ref{TH}:

\smallskip
{\bf a)} Non linear disjunction (i.e., $k_1^\acyc/k$ unramified) and 
$m \equiv 3 \pmod 9$. 

\smallskip
\qquad $\bullet$\ If $\rk(\CT_k) = \rk(\CT_k^\bp)$, Theorem \ref{DF} 
implies that all the cubic extensions of $k$ in $H_k^\pr$
are unramified and all the cubic polynomials $Q \in \Q[x]$ 
tested in the programs for the search of $k_1^\acyc$, give a 
field discriminant equal to that of $\Q(\sqrt{-m})$, hence exactly of
$3$-valuation~$1$. In particular, $\rk(\CH_{k^*})+1 = \rk(\CH_k)$
from \eqref{spiegel}\,(1); whence
$\rk(W_{\chi^*}) = \rk(\CH_k)+ 1- \rho^{}_{S^*}$. 

\smallskip
\qquad $\bullet$\ If $\CT_k = \CT_k^\bp \oplus \CW_k^\bp$, there exists a cubic 
extension of $k$ ramified and contained in $H_k^\pr$; thus 
we have the equality $\rk(\CH_{k^*}) = \rk(\CH_k)$, and then
$\rk(W_{\chi^*}) = \rk(\CH_k)+2-\rho^{}_{S^*}$. 

{\bf b)} Linear disjunction (i.e., $k_1^\acyc/k$ ramified) and $m \equiv 3 \pmod 9$. 
Then $\CT_k^\bp \simeq \CH_k$.

\smallskip
\qquad $\bullet$\ If $\rk(\CT_k) = \rk(\CT_k^\bp)$, then
$\rk(W_{\chi^*}) = \rk(\CH_k)+1$.

\smallskip
\qquad $\bullet$\ If $\CT_k = \CT_k^\bp \oplus \CW_k^\bp$,
then $\rk(W_{\chi^*}) = \rk(\CH_k)+2$. 

\medskip
{\bf c)} Let $W_{\chi^*}^\prim$ be the subgroup of $3$-primary elements 
of $W_{\chi^*}$ (i.e., giving unramified cyclic cubic fields). By definition
$\rk(W_{\chi^*}^\prim) = \rk(\CH_k)$.

\smallskip
Then, in case {\bf a)}, one verifies that $W_{\chi^*}^\prim = 
W_{\chi^*}$ if and only if $\rk(\CT_k) = \rk(\CT_k^\bp)$, equivalent to  
$\rho^{}_{S^*}=1$ (i.e., $\Ccl({\mathfrak p}^*) \notin \CH_{\chi^*}^3$).

\smallskip
In case {\bf b)}, since there is ramification in $k_1^\acyc/k$, hence 
non-$3$-primary elements, one gets $\rk(W_{\chi^*}^\prim) = 
\rk(W_{\chi^*})-1 = \rk(\CH_k)$, implying $\CW_k^\bp$ non 
direct factor of $\CT_k$.

\begin{theorem}\label{wp}
Assume that $m \equiv 3 \pmod 9$. Denote by $\varepsilon^*$ the 
fundamental unit of $k^*$, by $\eta^*$ its fundamental $S^*$-unit,
and, when $\Ccl ({\mathfrak p}^*) \in \CH_{k^*}^3$ 
(Special Split case), by $w_{{\mathfrak p}^*}^*$ 
the fundamental tricky element (see \eqref{tricky}).
Let $Y_{k^*}$ be the group of pseudo-units of $k^*$ 
(Definition \ref{pseudo-units}).

\smallskip
(i) The Normal Split case ($m \equiv 3 \pmod 9$, $\CH_{k^*}\ne 1$, and 
$\Ccl ({\mathfrak p}^*)\notin \CH_{k^*}^3$ equivalent to $\rho^{}_{S^*} = 1$) 
holds if and only if $\rk(\CT_k) = \rk(\CH_{k^*})$. Then:
$$W_{\chi^*} = \langle \varepsilon^*, \alpha_1^* , \ldots, \alpha_{r^*}^* 
\rangle = Y_{k^*} .$$

(ii) The Special Split case ($m \equiv 3 \pmod 9$, $\CH_{k^*}\ne 1$, 
and $\Ccl ({\mathfrak p}^*) \in \CH_{k^*}^3$ equivalent to $\rho^{}_{S^*} = 0$) 
holds if and only if $\rk(\CT_k) = \rk(\CH_{k^*})+1$. Then:
$$W_{\chi^*} = \langle \varepsilon^*, w_{{\mathfrak p}^*}^*,
\alpha_1^* , \ldots, \alpha_{r^*}^*\rangle  = Y_{k^*} \oplus \langle 
w_{{\mathfrak p}^*}^* \rangle.$$
\end{theorem}

\begin{proof}
In the split case, $\rk(E_{k^*}^{S^*}/(E_{k^*}^{S^*})^3) = 2$.

\smallskip
Case (i). The relation $\Ccl ({\mathfrak p}^*) \not\in \CH^{* 3}$ is 
equivalent to $\langle \Ccl ({\mathfrak p}^*) \rangle$ direct factor in 
$\CH_{k^*}/\CH_{k^*}^3$ (Lemma \ref{eta}); whence $\rk(W_{\chi^*}) 
= \rk(Y_{k^*})$. From \eqref{eq1} and \eqref{Wclass}, $W_{\chi^*} = 
\langle \varepsilon^*, \alpha_1^*, \ldots, \alpha_{r^*}^* \rangle$, whence 
$\rk(W_{\chi^*}) = 2+\rk(\CH_{k^*})-1 = \rk(\CH_{k^*})+1$, and 
$\rk(\CT_k) = \rk(\CH_{k^*})$.

\medskip 
Case (ii). Consider the relation ${\mathfrak p}^* = (w_{{\mathfrak p}^*}) \cdot 
{\mathfrak a}^{* 3}$, with ${\mathfrak a}^*$ prime to $3$:

\smallskip
\quad $\bullet$\ If  ${\mathfrak p}^*$ is non $3$-principal, ${\mathfrak a}^{* 3}$
is necessarily non-$3$-principal. Then there exists a minimal power $3^e$,
$e \geq 1$, and $u$ prime to $3$, such that ${\mathfrak p}^{* 3^e \cdot u} =
(\eta) = (w_{{\mathfrak p}^*}^{3^e \cdot u})\cdot {\mathfrak a}^{* 3^{e+1} \cdot u}$, 
hence $\eta^* = w_{{\mathfrak p}^*}^{* 3^e \cdot u} \cdot \alpha^*$, where the class 
of ${\mathfrak a}^*$ in $\CH_k$ leads to $\alpha^* \in Y_{k^*}$; thus, $\eta^*$,
equivalent to $\alpha^*$, does not intervene in $W_{\chi^*} 
= \langle \varepsilon^*, w_{{\mathfrak p}^*}^*, \alpha_1^* , \ldots, 
\alpha_{r^*}^*\rangle$, and $\rk(\CT_k) = \rk(\CH_{k^*})+1$.

\smallskip
\quad $\bullet$\ 
If ${\mathfrak p}^{* u} = (\eta)$ is principal because the $3$-class of 
${\mathfrak a}^*$ is of order $3$, then $\Ccl({\mathfrak a}^{* u})$ 
contributes to $Y_{k^*}$ because of the relation 
${\mathfrak a}^{* 3 \cdot u} = (\alpha^*)$ and we have a relation of the 
form $\eta= w_{{\mathfrak p}^*}^{u}\, \alpha^*$, thus giving
$\eta^*= w_{{\mathfrak p}^*}^{* u}\, \alpha^*$, 
where the two elements $\eta^*$ and $w_{{\mathfrak p}^*}^{* u}$
are in the Kummer radical and non-trivial modulo $k^{* 3}$; 
so one can eliminate $\eta^*$, whence $W_{\chi^*} 
= \langle \varepsilon^*, w_{{\mathfrak p}^*}^*, \alpha_1^*, \ldots, 
\alpha_{r^*}^*\rangle$ and $\rk(\CT_k) = \rk(\CH_{k^*})+1$.

\smallskip
\quad $\bullet$\ 
If ${\mathfrak p}^{* u} = (\eta)$ is principal, because ${\mathfrak a}^*$ is
$3$-principal, we obtain $\eta = w_{{\mathfrak p}^*}^{u}\, \alpha^{* 3}$;
in particular, $\eta^*$ and $w_{{\mathfrak p}^*}^{* u}$ are equivalent 
modulo $k^{* 3}$ and one finds $\rk(\CT_k) = \rk(\CH_{k^*})+1$
for a Kummer radical of the same form $W_{\chi^*} = \langle \varepsilon^*, 
w_{{\mathfrak p}^*}^*, \alpha_1^* , \ldots, \alpha_{r^*}^* \rangle$.

\smallskip
One verifies, from the relation ${\mathfrak p}^* = (w_{{\mathfrak p}^*}) \cdot 
{\mathfrak a}^{* 3}$, that $w_{{\mathfrak p}^*}^* \not\in Y_{k^*}\!\cdot\! k^{* 3}$.
\end{proof} 

\begin{remark}\label{inutile}
(i) In the Normal Split case, $W_{\chi^*} = Y_{k^*}$ and we may suppose 
that the fundamental $S^*$-unit $\eta^*$ does not intervene since $(\eta^*) 
= {\mathfrak p}^{* 3^e \cdot u}$ ($e \geq 1$ because $\Ccl({\mathfrak p}^*) \ne 1$,
being not a cube), and it suffices to represent $\Ccl({\mathfrak p}^*)$
by a prime-to-$3$ ideal (cf. Definition \ref{pseudo-units}). 
In many cases {\sc pari/gp} gives a generator $h_i^* = \Ccl({\mathfrak a}_i^*)$ 
with ${\mathfrak p}^* \mid {\mathfrak a}_i^*$; one verifies that this does not 
matter for the computation of $Q^\acyc$. For instance, let's give an excerpt 
of the data for $m=9498$, whence $k^* = \Q(\sqrt{3166})$:

\ft\begin{verbatim}
m=9498 Disc=37992 H_k=[24,2] T_k=[3] H_kstar=[3] k_1^ac/k is Unramified
Lw=List([
Mod(2773*x+156029,x^2-3166),
Mod(10913716410016992*x+614084477747659775,x^2-3166),
Mod(3405712514532801800843*x+191629973903847165204931,x^2-3166),
Mod(1062779835565406721297006110*x+59799666379906457780333544073,x^2-3166)])
SOLUTION:J=1 w=Mod(2773*x+156029,x^2-3166) 
Q^ac=x^3-x^2-86*x+390    H_kacyc=[8,2]    Val=2 a=1
J=2 Q=x^3+21*x-222 does not define k_1^ac
J=3 Q=x^3-210*x-1352 does not define k_1^ac
J=4 Q=x^3-54*x-272 does not define k_1^ac
\end{verbatim}\ns

The solution giving $k_1^\acyc$ is $w=\eta^*$ of the {\sc pari/gp} 
form ${\sf Mod(2773*x+156029,x^2-3166)}$, of norm $27$, so that 
${\mathfrak p}^{* 3} =: (\eta^*)$ and $Q=x^3-9x+312058$ (note that 
${\sf polredbest(Q)}$ yields ${\sf Q^{ac}=x^3-x^2-86*x+390}$); 
nevertheless, the field-discriminant is $2^3 \cdot 3^1 \cdot1583^1$ 
implying the non-ramification. This confirms that the Kummer extension
$M(\sqrt[3]{\eta^*})/M$ may be unramified, even in the case $(\eta^*) = 
{\mathfrak p}^{* 3}$, as explained in \cite[Theorem I.6.3\,(ii), Proposition II.1.6.3]
{Gra2005} on the computation of the conductor of such an extension.

\smallskip
(ii) In the Special Split case, $w_{{\mathfrak p}^*}^*$ 
intervenes non-trivially in $W_{\chi^*}$ and the fundamental
$S^*$-unit $\eta^*$ never intervene. Special Split cases are:

\smallskip\noindent
$m \in$ \ft
\{$3387$, $4962$, $5862$, $7257$, $8139$, $8913$, $11217$, $13413$, 
$13953$, $14862$, $17814$, $18714$, $19803$, $20703$, \,\ldots$\}$. \ns
\end{remark}

\section{Properties of the fields \texorpdfstring{$k, k_1^\acyc, k^*$}{Lg}
in the Normal Split case}

\subsection{Non-ramification of \texorpdfstring{$k_1^\acyc/k$}{Lg}}

The Normal Split case leads to the following 
property of the first layer $k_1^\acyc/k$:

\begin{theorem}\label{disjunction}
Let $k := \Q(\sqrt{-m})$, $m \equiv 3 \pmod 9$, and let 
${\mathfrak p}^*$ be a prime ideal dividing $3$ in 
$k^*:= \Q(\sqrt{m/3})$. In the Normal Split case 
where, by definition, $\CH_{k^*} \ne 1$, $\Ccl ({\mathfrak p}^*) 
\not\in \CH_{k^*}^3$ (equivalent to $\rk(\CT_k) = \rk(\CH_{k^*})$), 
the first layer $k_1^\acyc/k$ is unramified.
\end{theorem}

\begin{proof}
Recall that the $3$-principality of ${\mathfrak p}^*$ would mean
$\Ccl ({\mathfrak p}^*) \in \CH_{k^*}^3$ (Special Split case for which 
$\rk(\CT_k) = \rk(\CH_{k^*})+1$); so 
$\langle \Ccl ({\mathfrak p}^*)\rangle \ne 1$ is then
a direct factor in $\CH_{k^*}/\CH_{k^*}^3$.
From Theorem \ref{wp}\,(i) and Remark \ref{inutile}, we can restrict 
ourselves to the Kummer radical:
$$W_{\chi^*} = \langle \varepsilon^*, \alpha_1^*, \ldots, \alpha_{r^*}^* \rangle 
= Y_{k^*},\ \, {\rm with} \ r^* \geq 1, $$

\noindent
where $Y_{k^*}$ is the group of pseudo-units of $k^*$ (Definition 
\ref{pseudo-units}).
From relation \ref{reflection} of Theorem \ref{primarity} and the 
definition of ``Normal Split case'', we have:
\begin{equation} \label{NSC}
\left \{ \begin{aligned}
 \rk(\CH_k)-\rk(\CT_k) &= 
 \rk(\CH_k)-\rk(\CH_{k^*}) \\ 
& =  1- \rk(Y_{k^*}/Y_{k^*}^\prim) 
 = \rk(Y_k/Y_k^\prim) \in \{0,1\}.
\end{aligned}\right.
\end{equation}

Assume the contrary of the claim, that's to say, the disjunction of 
$H_k^\nr$ with $k^\acyc$ (whence $k^\acyc/k$ totally ramified and 
$\CH'_k = \CH_k$); from Schema \eqref{schemalog}, we would have:
\begin{equation}\label{NDF0}
\rk(\CH'_k) = \rk(\CH_k) = \rk(\CT_k^\bp)\ \ \& \ \ 
\rk(\CH_k)-\rk(\CT_k) \in \{-1, 0\}, 
\end{equation}  
depending on $\CW_k^\bp$ direct factor or not of $\CT_k$; 
so, from \eqref{NSC} and \eqref{NDF0}, we would have
$\rk(\CH_k)-\rk(\CT_k) = 0$, hence:
\begin{equation}\label{NDF}
\left\{\begin{aligned}
\hbox{$(1)$ \hspace{0.5cm}} & \rk(\CH_k) = \rk(\CH_{k^*}) = \rk(\CT_k), \\
\hbox{$(2)$ \hspace{0.5cm}} & \rk(Y_{k^*}/Y_{k^*}^\prim) 
= 1\ \ \& \  \ \rk(Y_k/Y_k^\prim) = 0, 
\end{aligned} \right.
\end{equation}
where the group $Y_k$ is of the form
$\langle \alpha_1 , \ldots, \alpha_{r} \rangle$, $r := \rk(\CH_k)$ 
since $(E_k^{S_k})^- = 1$ (no $S_k$-units 
of character $\chi$ in $k$ since $3$ does not split); whence, the Kummer 
radical corresponding to the $\chi^*$-component of $\Gal(H_{k^*, 1}^\pr/k^*)$, 
is $W_{\chi} = Y_k = Y_k^\prim$.  All the elements of $W_{\chi}$ 
are $3$-primary giving that the $\chi^*$-components of $\Gal(H_{k^*, 1}^\pr/k^*)$ 
and $\Gal(H_{k^*, 1}^\nr/k^*)$ coincide (all cubic $3$-ramified extensions 
of $k^*$ of character $\chi^*$ would be contained in $H_{k^*}^\nr$).

\smallskip
From \eqref{NDF0} and the first relation of \eqref{NDF} we would deduce 
$\rk(\CT_k) = \rk(\CH_k) = \rk(\CT_k^\bp)$, whence that $\CW_k^\bp$ is not 
direct factor of $\CT_k$ (i.e., $\CW_k^\bp \subseteq \CT_k^3$).

\smallskip
We use \cite[Section 2.2]{GJN2016} for the 
interpretation of $\CW_k^\bp$ as subgroup of infinitesimal classes in 
$\CT_k$. More precisely, put ${\mathcal K}^1 := \{x \in k^\times, \
\hbox{$x \equiv 1 \pmod {{\mathfrak p}}$}\} \otimes \Z_3$, with
${\mathfrak p}\mid 3$ in $k$, and let $\Ccl_\infty$ be the map
$I_k \otimes \Z_3 \too I_k \otimes \Z_3/{\mathcal P}_{k, \infty}$, where
${\mathcal P}_{k, \infty} := \hbox{$\bigcap_{n>0}$} (P_{k, (3^n)} \otimes \Z_3)$,  
so that $\Gal(H_k^\pr/k) \simeq I_k \otimes \Z_3/{\mathcal P}_{k, \infty}$, 
$\Gal(H_k^\pr/\wt k \cap H_k^\nr) \simeq 
\{(x), \ x \in {\mathcal K}^1\}/{\mathcal P}_{k, \infty}$, and:
\begin{equation}\label{zeta}
\left\{\begin{aligned}
\CW_k^\bp & \simeq \big \{(x), \ x \in {\mathcal K}^1,\  i(x) \in 
\tor_{\Z_3}(\CU_k^1) \big \} \big/ {\mathcal P}_{k, \infty} \\
& = \big \{\Ccl_\infty(x),  \  x \in {\mathcal K}^1,\ 
i(x) \in \tor_{\Z_3}(\CU_k^1) \big\}, 
\end{aligned}\right.
\end{equation}
where $i$ is the canonical embedding ${\mathcal K}^1 \to \CU_k^1$.
There would exist ideals ${\mathfrak a}=(x)$, $x \in {\mathcal K}^1$, and
${\mathfrak b} \in I_k \otimes \Z_p$ such that $\Ccl_\infty ({\mathfrak a})$ 
is a generator (of order $3$) of $\CW_k^\bp$, and
$\Ccl_\infty ({\mathfrak a}) = \Ccl_\infty ({\mathfrak b}^3)$,
so that $\Ccl_\infty ({\mathfrak b})$is of order $9$ in $\CT_k$;
then, ${\mathfrak a} = (x)$, with $i(x) = \zeta \in \tor_{\Z_3}(\CU_k^1)$; 
so $\zeta \ne 1$ since $\Ccl_\infty ({\mathfrak a}) \ne 1$. 
The projection (or arithmetic norm) of 
$\Ccl_\infty ({\mathfrak b}) \in \CT_k$  onto $\Ccl ({\mathfrak b}) 
\in \Gal(H_k^\nr/k) \simeq \CH_k$, yields $\Ccl ({\mathfrak b}^3) = 1$ 
since $\CW_k^\bp$ is the kernel of this projection, whence ${\mathfrak b}^3 
= (\beta)$ with $\beta \in Y_k = Y_k^\prim$ (second relation of \eqref{NDF}).
Finally one would get $i(x) = i(\beta) = \zeta$, which is absurd since 
$\zeta$ is not $3$-primary; thus $k^\acyc$ is not linearly disjoint from 
$H_k^\nr$, whence $k_1^\acyc/k$ unramified, and this proves the theorem.
\end{proof}

If $\CT_k^\bp = 1$, $H_k^\nr$ is contained in $k^\acyc$, the group 
$\CT_k$ coincides with $\CW_k^\bp$ which is direct factor
of $\Gal(k^\acyc/k)$ and leads to a very specific context since the 
cyclicity of $\CH_k$ implies $\hbox{\sc Val} := {\bf v}_3^{}(\hbox{\ft{\sc exp}}(k))
+{\bf v}_3^{}(\order \CT_k)- {\bf v}_3^{}({\rm h}(k))+2-{\bf v}_3^{}(m) = 2$ 
(Theorem \ref{Val}). See Section \ref{Hcyclic} for the consequences 
of this particularity.

\subsection{The torsion group \texorpdfstring{$\CT_{k^*}$}{Lg} and
the normalized regulator \texorpdfstring{$\CR_{k^*}$}{Lg}}\label{regulator}
The field $k^*$ being real, abelian $p$-ramification is simpler at the level 
of $\Z_p$-extensions but becomes more complicated due to the unit 
of the base field.

\subsubsection{Generalities about \texorpdfstring{$\CT_{k^*}$}{Lg} and
 \texorpdfstring{$\CR_{k^*}$}{Lg}}

Consider for $k^*$ the analog of the previous schemas of fields in 
$p$-ramification theory for $p=3$; put $k^* =: \Q(\sqrt{m^*})$
($m^*=3m$ or $m/3$). The 
tricky case for $k^*$ is the case $m^* \equiv -3 \pmod 9$  
(splitting of $3$ in $k$) since we have existence of
$\CW_{k^*}^\bp \simeq \Z/3\Z$; this only concerns some Non Split 
and Trivial cases (Programs I and IV).

\smallskip
In our context the $\chi^*$-component of $\Gal(H_{k^*}^\pr/k^*)$ 
does not involve any $\Z_3$-extension and the schema describing 
the $\chi^*$-component $\CT_{k^*}$, of $\Gal(H_{k^*}^\pr/k^*)$, becomes:

\unitlength=1.2cm
\begin{equation}\label{reel}
\begin{aligned}
\vbox{\hbox{\hspace{-4.8cm} 
\begin{picture}(9.7,1.2)
\put(6.6,0.20){\line(1,0){1.9}}
\put(3.6,0.20){\line(1,0){2.2}}
\put(5.9,0.1){$H_{k^*}^\nr$}
\put(8.7,0.1){${H_{k^*}^\bp}$} 
\put(9.4,0.20){\line(1,0){1.0}}
\put(10.45,0.1){${H_{k^*}^\pr}$}
\put(4.45,-0.1){\ft$\CH_{k^*}$}
\put(7.45,-0.1){\ft$\CR_{k^*}$}
\put(9.64,-0.1){\ft$\CW_{k^*}^\bp$}
\put(3.2,0.1){${k^*}$}
\put(4.15,0.3){\tiny${\rm unramfied}$}
\put(7.1,0.3){\tiny${\rm ramfied}$}
\put(9.45,0.3){\tiny${\rm ramfied}$}
\put(6.0,0.85){\ft$\CT_{k^*}^\bp$}
\bezier{500}(3.4,0.4)(6.1,1.1)(8.8,0.4)
\put(6.4,1.3){\ft$\CT_{k^*}$}
\bezier{500}(3.4,0.5)(6.3,1.95)(10.5,0.5)
\end{picture} }} 
\end{aligned}
\end{equation}
\unitlength=1.0cm

\smallskip\noindent
where the cyclic group $\CR_{k^*}$ 
is the normalized $3$-adic regulator. We have total ramification of 
$\wt {k^*}/k^* = k^* \Q^\cyc/k^*$; so $\wt {k^*}\! \cdot\! H_{k^*}^\nr/\wt k^*$ 
is unramified and $H_{k^*}^\pr/\wt {k^*}\!\cdot\! H_{k^*}^\nr$ ramified.

\smallskip
In the Non Split case, since by definition $\CH_{k^*} \ne 1$ and
$\CW_{k^*}^\bp \ne 1$, there is, a priori, no limitation for $\rk(\CT_{k^*})$ 
and the structure of the above schema may be more complicated;
we do not go further since this kind of information does not
intervene for the study of capitulation of $\CH_k$ in $k^\acyc$.
For instance, we have obtained the following example (Program I):

\smallskip
\ft\begin{verbatim}
m=113213 Disc=452852 kronecker(-m,3)=1 h_k=198 H_k=[66,3], T_k=[3,3] h_kstar=18 
H_kstar=[6,3] #H_k=9 #T_k^bp=9 #W_k^bp=1  k_1^ac/k is Ramified
SOLUTION:J=2 w=Mod(-26978654144*x+15722769856325,x^2-339639) 
Q^ac=x^3-5142*x-141940    H_kacyc=[198,6,6,3,3,3]    Val=3    a=1
W_k^bp=1 is not direct factor of T_k
Components of Cl(P1/P'1) and Cl(P1*P'1): [114,2,4,0,0,0] [0,2,0,0,0,1]
Algebraic norm of H_kacyc=[198,6,6,3,3,3] H_k=[66,3]
Norm of the component 1 of H_kacyc: [0,1,2,0,0,0]
Norm of the component 2 of H_kacyc: [0,0,0,0,0,0]
Norm of the component 3 of H_kacyc: [0,0,0,0,0,0]
Norm of the component 4 of H_kacyc: [0,0,0,0,0,0]
Norm of the component 5 of H_kacyc: [3,0,0,0,0,0]
Norm of the component 6 of H_kacyc: [6,0,0,0,0,0]
NO CAPITULATION IN H_k
H_kstar=[6,3]  T_kstar=[9,3,3]  #R_kstar=9
R_kstar is direct factor of T_kstar
\end{verbatim}\ns
 
\begin{lemma}\label{casramif}
For $k^*=\Q(\sqrt {m^*})$, we have the following formulas, where $\sim$
denotes equality up to a $p$-adic unit:

\smallskip
If $3 \nmid m^*$, then $\order\CR_{k^* } = \frac{1}{3} \cdot \log_3(\varepsilon^*)$.

\smallskip
If $3 \mid m^*$, $\order \CR_{k^*} \sim
\frac{1}{\sqrt {m^*}} \cdot \log_3(\varepsilon^*)$ (resp.
$\order \CR_{k^*} \sim \frac{1}{3\,\sqrt {m^*}} \cdot \log_3(\varepsilon^*)$)
if ${m^*} \not \equiv -3 \pmod 9$ (resp. ${m^*} \equiv -3 \pmod 9$). Note that
$\order \CR_{k^*} \, \order \CW_{k^*}^\bp \sim \frac{1}{\sqrt {m^*}} \, \log_3(\varepsilon^*)$.
\end{lemma}

\begin{proof}
Refer to the expression of $\order \CR_{k^*}$ in \cite[Proposition 5.1]{Gra2018}.
\end{proof}

Let's give few examples obtained up to $327500$ (we do not 
write all the outputs that may be obtained with Program II 
\eqref{program2}):

\ft\begin{verbatim}
    m=5142 Disc=20568 H_k=[6,6] T_k=[9] rk=2 rt=1 
W_k^bp is not direct factor of T_k
Q^ac=x^3-2*x^2-122*x+612    H_kacyc=[18,6]    Val=2    a=1
H_kstar=[12] T_kstar=[3,3] #R_kstar=3
R_kstar is direct factor of T_kstar
    m=175809 Disc=703236 H_k=[54,6] T_k=[81] rk=2 rt=1 
W_k^bp is not direct factor of T_k
Q^ac=x^3-x^2+164*x-14    H_kacyc=[162,6]    Val=4    a=1
H_kstar=[9] T_kstar=[9,9] #R_kstar=9
R_kstar is direct factor of T_kstar
    m=325065 Disc=1300260 H_k=[60,2,2,2] T_k=[3] T^bp=[]
W_k^bp is direct factor of T_k
Q^ac=x^3+198*x-764    H_kacyc=[20,2,2,2]    Val=2    a=1
H_kstar=[6,2] T_kstar=[81] #R_kstar=27
R_kstar is not direct factor of T_kstar
    m=325074 Disc=1300296 H_k=[144,2,2] T_k=[3] T^bp=[]
W_k^bp is direct factor of T_k
Q^ac=x^3-x^2+305*x-893    H_kacyc=[48,2,2,2,2]    Val=2    a=1
H_kstar=[12] T_kstar=[3] #R_kstar=1
R_kstar is not direct factor of T_kstar
\end{verbatim}\ns

\smallskip
As one can see, the two cases $\CR_{k^*} \ne 1$ direct factor or not in 
$\CT_{k^*}/\CT_{k^*}^3$ exist; for instance, in the following example, 
$\CT_{k^*} \simeq \Z/81\Z \times \Z/3\Z$, while $\CR_{k^*} \simeq \Z/27\Z$, 
the only possibility is $\CR_{k^*} = \langle h_{81}^3 \cdot h_3 \rangle$ of 
order $27$, where $h_{81},h_3$ are the generators of orders $81$ and $3$ 
of $\CT_{k^*}$, respectively; the quotient is indeed cyclic of order $9$:

\ft\begin{verbatim}
m=19677 Disc=78708 H_k=[6,6,2] T_k=[9] rk=2 rt=1 
W_k^bp is not direct factor of T_k
Q^ac=x^3-x^2-168*x+924    H_kacyc=[18,6,2,2,2]    Val=2    a=1
H_kstar=[18] T_kstar=[81,3] #R_kstar=27
R_kstar is direct factor of T_kstar
\end{verbatim}\ns

\subsubsection{A ``duality'' property of \texorpdfstring{$\CR_{k^*}$}{Lg}
and \texorpdfstring{$\CW_k^\bp$}{Lg} in the Normal Split case}

The following result seems also specific of the Normal Split case
for which we have proved that $k_1^\acyc/k$ is always unramified
(Theorem \ref{disjunction}):
\begin{theorem}\label{WvsR}
In the Normal Split case ($m \equiv 3 \pmod 9$, $\CH_{k^*} \ne 1$
and $\rk(\CT_k) = \rk(\CH_{k^*})$), $\CW_k^\bp$ is direct factor of $\CT_k$ 
if and only if the normalized $3$-adic regulator $\CR_{k^*}$ is not direct 
factor in $\CT_{k^*}/\CT_{k^*}^3$.
\end{theorem}

\begin{proof}
With regard to $k$ as the reflection of the field $k^*$ in the mirror
involution, the Kummer radical
$W_{\chi} \subseteq M^\times/M^{\times 3}$ corresponds to the 
${\chi^*}$-component of $\Gal(H_{k^*,1}^\pr/k^*) = 
\CT_{k^*}/(\CT_{k^*})^3$; in fact, $W_{\chi} = Y_k$ (pseudo-units  
of $k$) since $E_k=1$ and since there is no non-trivial 
$S_k$-unit of character $\chi$ ($3$ is ramified in $k$), nor any 
$w_{\mathfrak p}$ such that $(w_{\mathfrak p}) = 
{\mathfrak a}_3 \!\cdot\! {\mathfrak a}^3$ in $k$ (which gives 
formula (3) below). 

\smallskip
Whence, referring to Schema \eqref{schemalog} and to the formula 
\eqref{NSC} which is nothing else than the fundamental relation 
\ref{reflection} when $\rk(\CT_k) = \rk(\CH_{k^*})$:
\begin{equation*}
\left \{\begin{aligned}
&\hbox{$(1)$ \hspace{0.8cm} $\rk(\CT_k) = \rk(\CH_{k^*})$, } \\
&\hbox{$(2)$ \hspace{0.8cm} $\rk(\CH_k)-\rk(\CT_k) = 1-\rk(Y_{k^*}/Y_{k^*}^\prim)
 = \rk(Y_k/Y_k^\prim) \in \{0, 1\}$,}\\
& \hbox{$(3)$ \hspace{0.8cm} $\rk(\CT_{k^*}) = \rk(\CH_k) = \rk(Y_k)$.}
\end{aligned}\right .
\end{equation*}

Using $(1)$ and the Spiegelungssatz yields
$\rk(\CT_k) = \rk(\CH_k)-1$ or $\rk(\CT_k) = \rk(\CH_k)$:

\medskip
(i) Case $\rk(\CT_k) = \rk(\CH_k)-1$. 
This implies, from $(2)$: 
$$\rk(Y_k/Y_k^\prim) = 1 \ \,\& \ \,\rk(Y_{k^*}/Y_{k^*}^\prim) = 0. $$

Then $\CW_k^\bp$ is not a direct factor of $\CT_k$, otherwise this would imply
$\rk(\CT_k) = \rk(\CT_k^\bp) +1 = \rk(\CH'_k)+1 = \rk(\CH_k)-1$, 
whence $\rk(\CH_k)-\rk(\CH'_k) = 2$ (absurd). 

\smallskip
Thus $\CR_{k^*}$ is direct factor, otherwise 
$\rk(\CT_{k^*}) = \rk(\CH_{k^*})$ which would imply $\rk(\CH_{k^*}) = 
\rk(\CH_k)$ from $(3)$, then from (1), $\rk(\CH_{k^*}) = \rk(\CH_k)-1$ 
(absurd).

\medskip
(ii) Case $\rk(\CT_k) = \rk(\CH_k)$. We have, from $(3)$,
$\rk(\CT_{k^*})-\rk(\CH_{k^*}) 
= \rk(\CH_k)-\rk(\CH_{k^*}) = \rk(\CT_k)- \rk(\CH_{k^*}) = 0$ 
from (1), proving that $\CR_{k^*}$ is not a direct factor.

\smallskip 
It remains to prove that $\CW_k^\bp$ is a direct factor; one may assume
$\CT_k^\bp \ne 1$, otherwise the result is obvious.
Suppose that $\CW_k^\bp$ is not a direct factor, whence $\rk(\CT_k) =
\rk(\CT_k^\bp)$. Since this means $\CW_k^\bp \subseteq \CT_k^3$,
knowing that $\CT_k^\bp \ne 1$ and that $k_1^\acyc/k$ is unramified, 
we have $H_{k,1}^\pr \subseteq H_k^\nr$; from the assumption
$\rk(\CT_k) = \rk(\CH_k)$ and formula (2), we have
$\rk(Y_{k^*}/Y_{k^*}^\prim)=1$, whence the existence of a ramified 
cyclic cubic field ($F_0$ in Schema \eqref{schemalog}) of $k$ (contradiction).
So, $\CW_k^\bp$ is a direct factor. 
\end{proof} 

The case $\CR_{k^*}$ direct factor is rare and we 
obtain the list (Program II) \ft $5142$, $6690$, $6789$, $7977$, 
$8751$, $10173$, $11001$, $12837$, $19677$, $21018$, $22395$, 
$23862$,\,$\ldots$\ns; for $\CR_{k^*}$ non direct factor, 
Program II gives the list \ft$237$, $426$, $669$, $687$, $705$,
$1038$, $1281$, $1407$, $1722$, $2091$, $2190$, 
$2199$, $2622$,$2685$, $2694$, $2802$,\,$\ldots$ \ns

\section{Algorithms and {\sc pari/gp} Programs}

We intend to give maximal information about the arithmetic invariants
of $k$ and $k^*$; if this is the main goal for the class groups, we have seen 
that $\CT_k$ and $\CT_k^\bp$ (not given by specific {\sc pari/gp}
instructions), play a central role in the determination of $k_1^\acyc$
and its arithmetic, then characterize easily the Normal Split case
($\rk(\CT_k) = \rk(\CH_{k^*})$) from the Special Split case 
($\rk(\CT_k) = \rk(\CH_{k^*}) + 1$).

\subsection{Computation of the structure of \texorpdfstring{$\CT_k$}{Lg}} \label{calculT}
Finding the group structure of $\CT_k$ needs a modulus $(p^\nu)$, 
larger than $(p^2)$ giving the $p$-rank for $p \geq 3$; moreover, 
one must interpret the outputs given by {\sc pari/gp}. Indeed, the list 
${\sf Kpnu.cyc}$ gives the invariants of the Galois group of the 
ray class field $H_k(p^\nu)$, of modulus $(p^\nu)$, in decreasing order 
and begins with the two largest ones, related to 
$\Gal(H_k(p^\nu) \cap \wt k/k)$, 
which tend to infinity as $\nu \to \infty$.  
For $\nu$ large enough one gets the following structure of the $p$-ray 
class group of modulus~$(p^\nu)$: 
$${\CH_k(p^\nu) = [p^{z_\acyc}, p^{z_\cyc}; p^{t_1}, \ldots, 
p^{t_{\rm rt(k)}}] = [p^{z_\acyc}, p^{z_\cyc}] \oplus [\CT_k]}, \  
{z_\acyc \geq z_\cyc \geq t_1 \geq \cdots \geq t_{\rm rt(k)}}, $$ 
where ${\rm rt(k)}$ is the $p$-rank of $\CT_k$ and ${p^{z_\acyc}, 
p^{z_\cyc}}$ the components tending to infinity. 

\smallskip
For instance, for $m=1400529$, $p = 3$, where $\CH_k \simeq \Z/3\Z \times
\Z/3\Z$, taking successively $\nu = 12$, 
$\nu={\bf v}_3^{} (\hbox{\ft{\sc exp}}(k))=1$, 
$\nu={\bf v}_3^{} (\hbox{\ft{\sc exp}}(k))+1=2$, 
$\nu={\bf v}_3^{} (\hbox{\ft{\sc exp}}(k))+2=3$, 
$\nu={\bf v}_3^{} (\hbox{\ft{\sc exp}}(k))+3=4$, 
one obtains the following {\sc pari/gp} structures of $\CH_k(3^\nu)$: 

\medskip
\centerline{\ft${\sf [3^{12}, 3^{11}; 9]}$, \ \  ${\sf [9,3]}$,\ \ ${\sf [9,9,3]}$, \ \ 
${\sf [27,9; 9]}$,\ \ ${\sf [81,27; 9]}$, \ldots\ns, 
whence ${\CT_k \simeq \Z/9\Z}$.}

\medskip
It remains to check that for the suitable value of $\nu$,
${p^{z_\acyc} \geq p^{z_\cyc}}$ are the largest 
components, thus at the beginning of the {\sc pari/gp} list, and that
all others are strictly smaller and stable for any $\nu' > \nu$.

\begin{theorem}\label{T}
Let $k = \Q(\sqrt {-m})$ and let $p \geq 3$. One obtains the 
{\sc pari/gp}-structure of $\CT_k$, under the form ${[p^{t_1}, 
\ldots, p^{t_{\rm rt(k)}}]}$, from the computation of that of $\CH_k(p^\nu)$,
which is given by ${[p^{z_\acyc}, p^{z_\cyc}; p^{t_1}, \ldots, p^{t_{\rm rt(k)}}]}$,
taking the minimal value $\nu = {\bf v}_p^{} (\hbox{\ft{\sc exp}}(k))+2$, 
where $\hbox{\ft{\sc exp}}(k)$ is the exponent of $\CH_k$. Moreover, 
${z_\cyc = \nu-1}$ and ${z_\acyc \geq z_\cyc \geq 
t_1 \geq \cdots \geq t_{\rm rt(k)}}$.
\end{theorem}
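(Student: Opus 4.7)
The plan is to push the isomorphism $\Gal(H_k^\pr/k) \simeq \Z_p^2 \oplus \CT_k$ down to the finite level of the ray class group $\CH_k(p^\nu) = \Gal(H_k(p^\nu)/k)$. I would first observe that for $\nu$ large enough the kernel of the surjection $\Gal(H_k^\pr/k) \twoheadrightarrow \CH_k(p^\nu)$ is contained in the free $\Z_p^2$-summand $\Gal(\wt k/k)$, which itself decomposes as $\Gal(k^\acyc/k) \oplus \Gal(k^\cyc/k)$ by Remark \ref{minus}; this gives a direct-sum decomposition $\CH_k(p^\nu) \simeq \Z/p^{z_\acyc}\Z \oplus \Z/p^{z_\cyc}\Z \oplus \CT_k$ with the first two components growing unboundedly with $\nu$. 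The goal is to show that this splitting is already visible in the {\sc pari/gp} output at $\nu = {\rm v}_p(Exp)+2$.

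Next I would compute $z_\cyc = \nu-1$ on the cyclotomic side. Since the $p$-part of $(\Z/p^\nu\Z)^\times$ is cyclic of order $p^{\nu-1}$, the intersection $\Q^\cyc \cap \Q(\mu_{p^\nu})$ is exactly the $(\nu-1)$-st layer $\Q_{\nu-1}$ of $\Q^\cyc$, so the image of $\Gal(k^\cyc/k)$ in $\CH_k(p^\nu)$ is cyclic of order $p^{\nu-1}$. The inequality $z_\acyc \geq z_\cyc$ then follows because, in each of the configurations of Diagrams \eqref{schema3}--\eqref{schema4}, the anti-cyclotomic direction is at least as ramified at $p$ above $k$ as the cyclotomic one, so its contribution to $\CH_k(p^\nu)$ can only be larger; the {\sc pari/gp} convention of listing invariants in decreasing order then places $p^{z_\acyc}$ first.

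The main obstacle is the bound $\exp(\CT_k) \mid p \cdot Exp$, which is what forces $t_1 \leq {\rm v}_p(Exp) + 1 = \nu - 1 = z_\cyc$ and thereby justifies the minimality of $\nu = {\rm v}_p(Exp)+2$. From Schema \eqref{schema5}, together with $\CR_k = 1$ (automatic for imaginary quadratic $k$ since $E_k \otimes \Z_p = 1$), one has $H_k^\bp = \wt k H_k^\nr$, so $\CT_k^\bp = \Gal(H_k^\bp/\wt k) \simeq \Gal(H_k^\nr/\wt k \cap H_k^\nr)$ is a quotient of $\CH_k$, of exponent dividing $Exp$. Combined with $\exp(\CW_k) \leq p$ and the dichotomy of Proposition \ref{DF} (either $\CT_k = \CT_k^\bp \oplus \CW_k$, or $\CW_k \subseteq \CT_k^p$), a short extension-of-groups computation yields $\exp(\CT_k) \leq p \cdot Exp$. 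Inserting this bound shows that the two $\Z_p$-contributions dominate the invariants of $\CT_k$ and appear first in the {\sc pari/gp} list; the minimality of $\nu$ is confirmed by noting that at the smaller value $\nu' = {\rm v}_p(Exp)+1$ one would have $z_\cyc = \nu' - 1 = {\rm v}_p(Exp) < t_1$ whenever $\CT_k$ achieves the worst-case invariant $p^{{\rm v}_p(Exp)+1}$, which occurs precisely in the $\CW_k \subseteq \CT_k^p$ configuration with $\CT_k^\bp$ of exponent $Exp$.
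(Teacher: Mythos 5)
Your proposal follows essentially the same route as the paper: the splitting $\Gal(H_k^\pr/k)\simeq \Z_p^2\oplus\CT_k$ read off at the finite level $\CH_k(p^\nu)$, the identification $z_\cyc=\nu-1$ from the cyclotomic side, and the exponent bound $\exp(\CT_k)\mid p\cdot Exp$ obtained from the extension $1\to\CW_k\to\CT_k\to\CT_k^\bp\to 1$ with $\CT_k^\bp$ a quotient of $\CH_k$ (the paper phrases this as $\CT_k^\bp\simeq\CH'_k\subseteq\CH_k$, which is the same observation), which is exactly what forces $t_1\leq\nu-1$ and justifies $\nu={\rm v}_p(Exp)+2$. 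Your closing remark on why $\nu'={\rm v}_p(Exp)+1$ fails is a genuine addition: the paper asserts minimality but only proves sufficiency, and the configuration you describe does occur (e.g.\ $m=417$, $\CH_k=[3]$, $\CT_k=[9]$).

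The one step I would not accept as written is your justification of $z_\acyc\geq z_\cyc$. You argue that ``the anti-cyclotomic direction is at least as ramified at $p$ above $k$ as the cyclotomic one, so its contribution can only be larger,'' but this is not the mechanism, and in the unramified case ($3\nmid m$) it points the wrong way: there the two $\Z_p$-lines receive \emph{identical} ramified contributions $p^{\nu-1}$ from the local units, and the strict inequality $z_\acyc>z_\cyc$, when it occurs, comes from the \emph{unramified} head start $k^\acyc\cap H_k^\nr$ (recall $\wt k\cap H_k^\nr=k^\acyc\cap H_k^\nr$), i.e.\ $z_\acyc=z_\cyc+{\rm v}_p\big([k^\acyc\cap H_k^\nr:k]\big)$ --- less ramification in the tower, larger component in the ray class group. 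The paper instead derives the inequality from the exact sequence
$$1 \too P_k\otimes\Z_p/P_{k,p^\nu}\otimes\Z_p \tooo \CH_k(p^\nu) \tooo \CH_k \too 1,$$
identifying the kernel with $\CU_k^1/\CU_k^{(\nu)}$ and taking minus parts, which exhibits a cyclic subgroup of order $p^{\nu-1}$ (resp.\ ${\mathfrak t}\oplus\Z/p^\nu\Z$ when $p$ ramifies) inside $\CH_k(p^\nu)^-$ with quotient $\CH_k$; combined with $t_1\leq\nu-1$ this forces $z_\acyc\geq\nu-1=z_\cyc$, and in the ramified case even $z_\acyc\geq\nu>z_\cyc$. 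You should replace your ramification heuristic by this computation (or by the conductor count above); the rest of your argument then goes through.
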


\begin{proof}
Consider the subgroup $\CH'_k$ of $\CH_k$ (see Schema \eqref{schemalog}),
isomorphic to $\CT_k^\bp$; for $p  \ne 3$, $\hbox{\ft{\sc exp}}(k) =  p^{\nu-2}$ 
annihilates $\CT_k$. For $p=3$ and $\CT_k^\bp$ of index $3$ in $\CT_k$, 
it follows that $3 \, \hbox{\ft{\sc exp}}(k) = 3^{\nu-1}$ annihilates $\CT_k$.

\smallskip
We have the well-known formula in $k/\Q$ (see, e.g., \cite[Corollary I.4.5.6]
{Gra2005} for $E_k=1$), where $\varphi_k({\mathfrak p}^n) \sim \varphi_\Q^{}
(\BN{\mathfrak p}^n) \sim (\BN{\mathfrak p})^{n-1}$, for 
${\mathfrak p} \mid p$, and $\BN{\mathfrak p} = p^f$, where $f$ is 
the residue degree and $\varphi$ the Euler function; since in $k$, 
$(p) = {\mathfrak p}, {\mathfrak p}{\mathfrak p}'$ or ${\mathfrak p}^2$:
$$\order \CH_k(p^\nu) \sim \order \CH_k \cdot \varphi_k(p^\nu) \sim 
\order \CH_k \cdot p^{2(\nu-1)} (\hbox{resp.}\  \order \CH_k \cdot p^{2 \nu-1}
\ \hbox{in the ramified case}).$$

The component $\CH_k(p^\nu)^+$ may be deleted in the 
formula since it concerns the cyclotomic context of $\Q^\cyc$ for 
which $\CT_\Q = 1$ and $\order \CH_\Q(p^\nu) \sim p^{\nu-1}$.
So, the minus part fulfills the relation
$\order \CH_k(p^\nu)^- \sim \order \CH_k \cdot p^{\nu-1}\ 
(\hbox{resp.}\ \order \CH_k \cdot p^\nu)$.
This formula comes from the minus parts of the
exact sequence:
$$1 \to P_k \otimes \Z_p/P_{k,p^\nu} \otimes \Z_p \to
\CH_k(p^\nu) \simeq  I_k \otimes \Z_p/P_{k,p^\nu} \otimes \Z_p
\to \CH_k =  I_k \otimes \Z_p/P_k \otimes \Z_p \to 1, $$

\noindent
interpreting $\CH_k(p^\nu)$ by means of the isomorphism (since $E_k = 1$): 
\begin{equation*}
\left \{\begin{aligned}
P_k \otimes \Z_p/P_{k,p^\nu} \otimes \Z_p & \simeq \CU_k^1/\CU_k^\nu
= (1+ p\CO_k) / (1+p^\nu \CO_k)  \ \hbox{(if $p \nmid m$)}; \\
P_k \otimes \Z_p/P_{k,p^\nu} \otimes \Z_p& \simeq \CU_k^1/\CU_k^{2 \nu}
= (1+ \sqrt{-m}\CO_k) / (1+(\sqrt{-m})^{2 \nu} \CO_k) \ \hbox{(if $p \mid m$)},
\end{aligned}\right.
\end{equation*}

\noindent
Taking the minus parts, where $\CO_k^- = \Z_p\sqrt{-m}$
and $\tor_{\Z_p}^{}(\CU_k^1) \simeq \CW_k^\bp$, we obtain:
\begin{equation*}
\left \{\begin{aligned}
&1 \to \Z_p/p^{\nu-1}\Z_p \too \CH_k(p^\nu)^- \too \CH_k \to 1, 
\hbox{ if $p > 3$ {\it or} $p=3$ \& $m \not\equiv 3 \!\!\! \pmod 9$} ;\\
&1 \to \Z/3\Z \oplus (\Z_3/3^{\nu-1} \Z_3) \too \CH_k(3^\nu)^- \too \CH_k \to 1, 
\hbox{ if $p = 3$ \& $m \equiv 3 \!\!\! \pmod 9$}.
\end{aligned}\right.
\end{equation*}

This proves that the group structure of $\CH_k(p^\nu)^-$ 
contains a component of order $p^{\nu-1}$, 
and we recall that $p^{z_\cyc} = p^{\nu-1}$, so that $p^{z_\acyc}
\geq p^{z_\cyc}$ are necessarily the two first in the list.
Then, in the outputs, the minus part reduces to:
$$\hbox{${\CH_k(p^\nu)^- = [p^{z_\acyc};\, p^{t_1}, \ldots, p^{t_{\rm rt(k)}}]}$, 
with $z_\acyc \geq z_\cyc = \nu-1 \geq t_1\geq\, \cdots$}$$
So, it suffices to take $\nu-1 = {\bf v}_p (\hbox{\ft{\sc exp}}(k)) +1$ to be 
certain that ${\CT_k = [p^{t_1}, \ldots, p^{t_{\rm rt(k)}}]}$.
\end{proof}

The degree $p^{(z_\acyc-z_\cyc)}$ is given by $[k^\acyc \cap H_k^\nr : k]$;
for instance one finds for $p = 3$ the case (meaning that the $3$-Hilbert 
class field is contained in $k^\acyc$):

\medskip
\centerline{\hspace{-2.4cm} ${\sf m=335\ \ -m \equiv 1 \pmod 3\ \ 
\CH_k=[9]\ \ \nu = 4\ \  \CH_k(3^\nu) = [243,27]\ \ \CT_k = [\ ]}$,}

\medskip\noindent
and the cases  (meaning $H_k^\nr$ disjoint from $k^\acyc$, 
$\CT_k^\bp \simeq \CH_k$, and $\CW_k^\bp = \CT_k^3$):

\medskip
\centerline{\hspace{-2.9cm} ${\sf\  m=417\ \ \,m \equiv 0 \pmod 3\ \ 
\CH_k=[3]\ \ \,\nu = 3\ \ 
\CH_k(3^\nu) = [9,9,9]\ \ \, \CT_k = [9]}$.}

\medskip
\centerline{${\sf m=10938054\ \, m \equiv 0 \pmod 3\ \, 
\CH_k=[9,3] \ \, \nu = 4\ \, \CH_k(3^\nu) = [27,27,9,3,3] \ \, \CT_k = [9,3,3]}$.}

\begin{remark}
It is not difficult to see that, for $p=3$ and the real quadratic field $k^*$, the 
optimal value of $\nu^{{}_*} $, allowing the computation of the structure of 
$\CT_{k^*}$, is:
$$\nu^{{}_*} = {\bf v}_3^{}(\hbox{\ft{\sc exp}}(k^*))+ {\bf v}_3^{}(\order \CR_{k^*})
+ {\bf v}_3^{} (\CW_{k^*}^\bp)+2, $$
where $\hbox{\ft{\sc exp}}(k^*)$ is the exponent of  $\CH_{k^*}$ and $\CR_{k^*}$ 
the normalized $3$-adic regulator (a cyclic group). Indeed, the cyclotomic 
$\Z_3$-extension of $k^*$ is totally ramified, giving the previous Schema 
\eqref{reel} and the property that the largest component of 
$\Gal(H_{k^*}(3^{\nu^{{}_*}})/k^*)$ is of order $3^{\nu^{{}_*} -1}$; a specific 
intricacy is to take into account the index $(E_{k^*} : E_{k^*, 3^{\nu^{{}_*}}})$ 
in the formula of the order of the ray class group of modulus $3^{\nu^{{}_*}}$
(cf. Section \ref{regulator}).

\smallskip
So, it suffices that $3^{\nu^{{}_*} - 1}$ be larger than the product of the exponent of
$\CH_{k^*}$ by $\order \CR_{k^*}$ (this is used at the end of Programs II and III;
Programs I and IV do not compute this part).
\end{remark}

\subsection{Definition and programming of the algorithm}

In an algorithmic point of view (Algorithm \ref{method}), 
recall that we consider each radical $w \in W_{\chi^*}$ giving, by 
descent over $k$, a $3$-ramified cyclic cubic extension $K_1$ whose
irreducible polynomial is $Q = x^3-3a x-t$ ($a^3 = w w'$, $t=w+w'$); 
then we take prime ideals ${\mathfrak q}$ above prime numbers $q$, 
split in $k$, so that the condition $\Log_3({\mathfrak q})^- \in 
3 \,\Log_3(I_k \otimes \Z_3)^-$ be fulfilled (via $v_3(C_1) \geq \hbox{\sc Val}$); 
then we can test if the Artin symbol $\big (\frac {H_k^\pr/k}{{\mathfrak q}}\big)$ 
of ${\mathfrak q}$ fixes or not $K_1$; if it does not fix $K_1$ (or equivalently
if $Q$ modulo $q$ is irreducible), then $K_1$ can not be 
$k^\acyc$ (because of the condition $\Log_3({\mathfrak q})^- \in 
3 \,\Log_3(I_k \otimes \Z_3)^-$, meaning the splitting of $Q$ modulo $q$), 
and $w$ is eliminated. Since the Artin symbols fixe $k_1^\acyc$ 
for all the primes tested, it is enough to consider sufficiently primes 
${\mathfrak q}$ to get (by elimination) the unique solution $w = w^\acyc$ 
giving $k_1^\acyc$ and its defining polynomial $Q^\acyc$ (in ${\sf Q^{ac}}$).

\smallskip
We then have two particular cases giving simplifications:

\smallskip
(i) When $\CH_{k^*} = 1$ (Trivial case), then $w \in 
\langle \varepsilon^*, \eta^* \rangle$, where $\eta^*$, in the split 
case of $3$ in $k^*$, is the fundamental $S^*$-unit; 
if $3$ does not split in $k$, $w = \varepsilon^*$. 
Moreover, $\CH_k$ is cyclic or trivial.

\smallskip
(ii) If $k_1^\acyc/k$ is unramified and $\CH_k$ cyclic, there is 
no other unramified cubic extension, and its defining polynomial $Q^\acyc$ 
is that giving a discriminant equivalent to that of $k$; this is the case 
denoted by the conditions: ${\sf if(hk3>otbp\ \&\ rk==1}$) in 
the programs (i.e., $\order \CH_k  > \order \CT_k^\bp$
and $\rk(\CH_k)=1$).

\subsection{Specific notations in the programs} \label{notations}
The {\sc pari/gp} writings used in the programs have the following meaning:

\smallskip
${\sf P}$ (resp. ${\sf Pstar}$) = quadratic polynomial defining ${\sf k}  = 
\Q(\sqrt{-m})$ (resp. ${\sf kstar} = \Q(\sqrt {3m})$ or $\Q(\sqrt {m/3})$).

\smallskip
${\sf hk=k.no} = \order \BH_k$ (resp. ${\sf hkstar=kstar.no} 
= \order \BH_{k^*}$) = class number of $k$ (resp. of $k^*$).

\smallskip
${\sf rk}$ (resp. ${\sf rkstar}$) = $3$-rank $\rk(\CH_k)$
of the $3$-class group $\CH_k$ (resp. $3$-rank 
$\rk(\CH_{k^*})$ of $\CH_{k^*}$).

\smallskip
${\sf expk}$ (resp. ${\sf expkstar}$) = 
exponent $\hbox{\ft{\sc exp}}(k)$ of $\CH_k$ 
(resp. $\hbox{\ft{\sc exp}}(k^*)$ of $\CH_{k^*}$).

\smallskip
${\sf exptak}$ (resp. ${\sf exptakstar}$) = 
exponent $\hbox{\ft{\sc expta}}(k)$ of the tame component
of $\BH_k$ (resp. $\BH_{k^*}$).

\smallskip
${\sf nu} = {\bf v}_3^{}(\hbox{\ft{\sc exp}}(k))+2$, ${\sf nu^*} = 
{\bf v}_3^{}(\hbox{\ft{\sc exp}}(k^*)) + {\bf v}_3^{}(\CR_{k^*})+2$.

\smallskip
${\sf Tk}$ = list of the group invariants of $\CT_k :=
\Gal(H_k^\pr/\wt k)$, deduced from the ray class group 
$\CH_k(3^\nu)$, of modulus $3^\nu$, where $\wt k$ is the 
compositum of the $\Z_3$-extensions $k^\cyc$ and $k^\acyc$. 

\smallskip
${\sf rtk}$ = $3$-rank $\rk(\CT_k)$ of $\CT_k$.

\smallskip
${\sf Wk} = \CW_k^\bp$ = subgroup of $\CT_k$ fixing the 
Bertrandias--Payan field $H_k^\bp$.

\smallskip
${\sf ot}$ = $\order \CT_k$ and ${\sf otbp = ot/ow}$ = $\order \CT_k^\bp$,
where ${\sf ow}$ = $\order \CW_k^\bp$.

\smallskip
${\sf Val}$ = bound required to get $\Log_3(\mathfrak q) \in 3\, 
\Log_3(I_k \otimes \Z_3)$ (Theorem \ref{Val}).

\smallskip
${\sf Maxq}$ = bound for the auxiliary primes $q$.

\smallskip
${\sf ram = Ramified}$ (resp. ${\sf Unramified}$) indicates that $k_1^\acyc/k$ is 
ramified (resp. unramified).

\smallskip
${\sf Nrad} = {\rm rtk}+1$ = $3$-rank of $W_{\chi^*} \subset k^*/k^{*\,3}$, 
taking into account the fundamental unit $\varepsilon^*$ of $k^*$ and possibly 
specific radicals $\eta^*$ (fundamental $S^*$-unit) and $w_{{\mathfrak p}^*}^*$ 
defined from \eqref{tricky}.

\smallskip
${\sf nw=(3^{Nrad}-1)/2}$ = number of non-trivial radicals = number
of $3$-ramified cubic cyclic extensions of $k$, distinct from the cyclotomic one.

\smallskip
${\sf Lw}$ = list of the ${\sf nw}$ radicals $w \in W_{\chi^*}$.

\smallskip
${\sf Q = Q^{ac} = x^3-3*a*x-t}$, with ${\sf a^3 = norm(w)}$ and 
${\sf t = trace(w)}$, defines $k_1^\acyc/k$.

\smallskip
${\sf R = compositum(x^2+m,Q^{ac})}$ = polynomial of degree 
$6$ defining $k_1^\acyc/\Q$.

\smallskip
${\sf DiscR}$ = Discriminant of $k_1^\acyc$.

\smallskip
${\sf Rkstar}$ = normalized $3$-adic regulator $\CR_{k^*}$ of $k^*$.

\begin{remarks}
(i) The outputs give the polynomials ${\sf Q^{ac} = polredbest(x^3-3*a*x-t)}$; 
for the computation of $\BH_{k_1^\acyc}$ we have used this reduced form, 
and the reader may return to the original computation;
for instance, for $m=102203$, the output is:

\smallskip
\centerline{\ft${\sf Q=x^3-3*x+
24015965877041268782539898447628077395512816560774268}$\ns}

\noindent
\hspace{2.6cm}\ft${\sf 525708969597351078951941393554
079402148815543809546015031051950}$,\ns

\smallskip\noindent
while ${\sf Q^{ac} = polredbest(Q) = x^3-93*x-1160}$, of field-discriminant 
$3^4 \times 102203$ as expected.

\smallskip
(ii) The writting ${\sf Norm\ of\ the\ component \ i \ of\ H_{kacyc}:\  [a,b,...]}$ gives the
exponents of the generators ${\sf h^{ac}_j}$ of $\BH_{k_1^\acyc}$ for the algebraic 
norm of $h^{\acyc}_i$ (equal $[0,0,...,0]$ when $\BNu_{k_1^\acyc/k}(h^{\acyc}_i) = 1$).

\smallskip
(iii) In some programs, we have included the test of partial capitulation 
that we will study in Section \ref{verifcap} and we compute the torsion
group $\CT_{k^*}$ of $3$-ramification and the normalized $3$-adic regulator 
$\CR_{k^*}$, of $k^*$. We indicate if $\CW_k^\bp$ is direct factor or not
of $\CT_k$ and if $\CR_{k^*}$ is or not direct factor of $\CT_{k^*}^\bp$.
\end{remarks}

\section{The four programs for the determination of 
\texorpdfstring{$k_1^\acyc/k$}{Lg}}

These programs may be copied-and-pasted by the reader,
directly from the pdf file, respecting all line breaks and the fact that 
they are written on several pages. One has the
choice of considering a list of $m$'s or an interval of $m$'s (use suitably 
the $\backslash\!\backslash$ to favor a method). They are related to the 
cases Non Split, Normal Split, Special Split, Trivial, described in Definition 
\ref{fourcases}. They use a not too old version \cite[2.11.2]
{Pari2019} of {\sc pari/gp}.

\smallskip
Up to ${10^6}$ there are ${\sf Nm=607923}$ square-free integers ${m}$.
Then the four cases (Non Split, Normal Split, Special Split, and Trivial)
considered by Programs I, II, III, IV, respectively, have the following distribution:

\medskip
\centerline{\ft${\sf Nprog1/Nm=11.483\, \%,\ \, Nprog2/Nm=01.190\, \%,\ \, 
Nprog3/Nm=00.362\, \%,\ \, Nprog4/Nm=86.965\, \%}. $\ns}

\medskip
Of course, the last number counts the fields such that not only $\CH_{k^*}=1$ 
but also those with $\CT_k = \CT_k^\bp = \CH_k = \CW_k^\bp = 1$, in which 
case $k_1^\acyc$ comes from $w = \varepsilon^*$, non-$3$-primary.
For Program IV and $m \leq 150$, we have approximatively $80 \%$ of 
fields with $m \not \equiv 3 \pmod9$ and $10 \%$ with $m \equiv 3 \pmod9$.

\smallskip
To decide which program to use, one may test $m$ with the following:

\medskip\noindent
{\bf PROGRAM 0}: 

\smallskip
\ft\begin{verbatim}
{m=8139;m9=lift(Mod(m,9));P=x^2+m;k=bnfinit(P);Pstar=x^2-3*m;
if(Mod(m,3)==0,Pstar=x^2-m/3;);kstar=bnfinit(Pstar);hkstar=kstar.no;
rkstar=0;d=matsize(kstar.cyc)[2];for(j=1,d,C=kstar.cyc[j];
if(Mod(C,3)==0,rkstar=rkstar+1));nu=valuation(k.no,3)+2;
Kpnu=bnrinit(k,3^(nu));Hpnu=Kpnu.cyc;rt=0;d=matsize(Hpnu)[2];
for(j=3,d,C=Hpnu[j];if(Mod(C,3)==0,rt=rt+1));kstar3=lift(Mod(hkstar,3));
if(m9!=3 & kstar3==0,print("Prog. I"));
if(m9==3 & kstar3==0 & rt==rkstar,print("Prog. II"));
if(m9==3 & kstar3==0 & rt==rkstar+1,print("Prog. III"));
if(kstar3!=0,print("Prog. IV"))}
Prog. III
\end{verbatim}\ns

\smallskip
For each program, a sufficient precision ${\sf \backslash p \ {***}}$, is
often needed for few fields. The precision may be insufficient, especially 
because of the instructions of the form ${\sf b=bnfisprincipal(k,B)}$
computing the generators $\beta$ such that $(\beta) = 
{\mathfrak q}^{\hbox{\tiny{\sc exp}(k).{\sc hta}(k)}}$ in the class group $\BH_k$; 
if so, the following message appears which invites to increase the precision:

\smallskip
\ft\begin{verbatim}
 *** bnfisprincipal: Warning: precision too low for generators, not given.
 *** at top-level: ...beta=lift(Mod(k.zk[1]*b[2][1]+k.zk[2]*b[2][2],P)^(3^dres-1))
\end{verbatim}\ns

\smallskip
In the same way, the choice of ${\sf Maxq}$ may be insufficient
for huge integers $m$; for instance with the list of $49$ items of 
Program I and ${\sf Maxq=2*10^2}$ the final list is:
$${\sf ListSigma=List([58213,78730])}$$
which becomes empty with ${\sf Maxq=3*10^2}$. Of course,
the running time depends on the order of magnitude of ${\sf Maxq}$;
the value ${\sf Maxq=10^3}$ seems largely sufficient in practice.
 
\smallskip
Finally, in the outputs, we have removed the writing of the lists 
${\sf Lw}$ of radicals $w$, often enormous and with few interest.

\subsection{PROGRAM I (Non Split case)} \label{program1}
This program processes only cases $m \not \equiv 3 \pmod 9$ 
(equivalent to $3$ non-split in $k^*$ or to $\CW_k^\bp=1$), and 
$\CH_{k^*} \ne 1$. Since $3$ does not split in $k^*$, the radical 
$W_{\chi^*}$ is the group of pseudo-units $Y_{k^*} = \big\langle\,
\varepsilon^*, \alpha_1^*, \ldots, \alpha_{r^*}^*\,\big\rangle$
of $3$-rank $\rk(\CH_{k^*})+1$:

\smallskip
\ft\begin{verbatim}
\p 300
allocatemem(800000000)
{print("From Program I (Non Split case)");Maxq=500;
ListSigma=List;Listm=List;Nm=21;
Listm=List([28477,32573,34603,35353,39677,50983,55486,56773,58213,59221,
61379,63079,67054,68626,70977,78730,82834,86551,88415,96027,98281]);
\\List of fields k such that rk = rkstar = 2
\\***choose between List and interval:***
\\for(km=1,Nm,m=Listm[km];
for(m=5,2.5*10^4,if(core(m)!=m,next);
if(Mod(m,9)==3,next); \\***First condition***
Pstar=x^2-3*m;if(Mod(m,3)==0,Pstar=x^2-m/3);
kstar=bnfinit(Pstar);hkstar=kstar.no;
if(Mod(hkstar,3)!=0,next); \\***Second condition***
P=x^2+m;k=bnfinit(P);hk=k.no;Delta=0;
Lstar=List;Cstar=kstar.clgp[2];rkstar=0;
d=matsize(Cstar)[2];for(j=1,d,C= Cstar[j];
if(Mod(C,3)==0,rkstar=rkstar+1;listput(Lstar,j)));
rk=0;expk=1;d=matsize(k.cyc)[2];for(j=1,d,C=k.cyc[j];
v=valuation(C,3);if(v>0,rk=rk+1;expk=max(expk,3^v)));
exptak=1;d=matsize(k.cyc)[2];for(j=1,d,C=k.cyc[j];
v=valuation(C,3);exptakC=C/3^v;exptak=lcm(exptakC,exptak));

\\STRUCTURE OF T_k AND COMPUTATION OF Val:
nu=valuation(expk,3)+2;
Kpnu=bnrinit(k,3^nu);Hpnu=Kpnu.cyc;e=matsize(Hpnu)[2];
Tk=List;ot=1;rt=0;for(j=3,e,c=Hpnu[j];v=valuation(c,3);
if(v>0,listput(Tk,3^v);rt=rt+1;ot=ot*3^v));print();
ow=1;if(Mod(m,9)==3,ow=3);Wk=ow;hk3=3^valuation(hk,3);
otbp=ot/ow;ram=Ramified;if(hk3!=otbp,ram=Unramified);
Val=valuation(expk*ot/(m*hk),3)+2;

\\WRITINGS OF THE MAIN INVARIANTS:
kro=kronecker(-m,3);Disc=m;if(Mod(-m,4)!=1,Disc=4*m);
print();print("m=",m," Disc=",Disc," kronecker(-m,3)=",kro,
" h_k=",hk," H_k=",k.cyc," T_k=",Tk," h_kstar=",hkstar,
" H_kstar=",kstar.cyc," #H_k=",hk3," #T_k^bp=",otbp,
" #W_k^bp=",Wk,"  k_1^ac/k is ",ram);

\\COMPUTATION OF THE nw RADICALS IN Lw:
Lw0=List;Nrad=rkstar+1;w0=kstar.fu[1];listput(Lw0,w0);
for(i=1,rkstar,Idstar=kstar.clgp[3][Lstar[i]];
alpha=bnfisprincipal(kstar,idealpow(kstar,Idstar,Cstar[Lstar[i]])); 
wi=Mod(kstar.zk[1]*alpha[2][1]+kstar.zk[2]*alpha[2][2],Pstar);
listput(Lw0,wi));Lw=List;nw=3^Nrad-1;LD=List;for(i=1,nw,
D=digits(i,3);if(D[1]!=2,listput(LD,D)));nw=nw/2;for(i=1,nw,
d=matsize(LD[i])[2];d0=Nrad+1-d;w=1;for(j=d0,Nrad,
w=w*Lw0[j]^LD[i][j-d0+1]);listput(Lw,w));print("Lw=",Lw);

\\RESEARCH OF THE SOLUTION AMONG THE nw RADICALS:
\\Begin J
for(J=1,nw,w=Lw[J];Nw=norm(w);ispower(Nw,3,&a);
Q=x^3-3*a*x-trace(w);Q=polredbest(Q);

\\UNRAMIFIED CASE with RANK rk=1:
if(hk3>otbp & rk==1,Disc=nfdisc(Q);if(valuation(Disc,3)>1,
print("J=",J," w=",w," Q=",Q," does not define k_1^ac"));
if(valuation(Disc,3)<=1,Delta=Delta+1;
R=polcompositum(P,Q)[1];kacyc=bnfinit(R);hkacyc=kacyc.no;
print("SOLUTION:J=",J," w=",w," Q^ac=",Q);
print("H_kacyc=",kacyc.cyc)));

\\RAMIFIED CASE or RANK rk>1:
if(hk3==otbp || rk>1,Test=0;e0=1;if(Mod(m,3)==0,e0=2);
dres=1;if(kro==-1,dres=2);
forprime(q=5,Maxq,if(kronecker(-m,q)!=1,next);
A=component(component(idealfactor(k,q),1),1);
B=idealpow(k,A,exptak*expk);b=bnfisprincipal(k,B);
pib=Mod(k.zk[1]*b[2][1]+k.zk[2]*b[2][2],P)^(3^dres-1)-1;Log=pib;
t=1;while(t<=e0*(Val+log(t)+1),t=t+1;Log=Log-(-1)^t*pib^t/t);
Log=lift(Log);C1=polcoeff(Log,1);if(valuation(C1,3)<Val,next);
Qq=Q+O(q);if(polisirreducible(Qq)==1,print("J=",J," q=",q,
" Qq irreducible");Test=1;break));if(Test==0,Delta=Delta+1;
R=polcompositum(P,Q)[1];kacyc=bnfinit(R);hkacyc=kacyc.no;
print("SOLUTION:J=",J," w=",w," Q^ac=",Q);
print("H_kacyc=",kacyc.cyc))));
\\End J
if(Delta!=1,listput(ListSigma,m));

\\VALUE OF Val AND the unit index a:
print("W_k^bp=1 is not direct factor of T_k");
print("Val=",Val," a=",3*3^-lift(Mod(valuation(hkacyc/hk,3),2)));

\\COMPUTATION OF THE PARTIAL CAPITULATIONS:
\\(3 non-split in k) -Test of principality of P_1:
if((Mod(-m,3)!=1 & ram==Ramified),
Sideal=component(component(idealfactor(kacyc,3),1),1);
Pr=bnfisprincipal(kacyc,Sideal);
print("Components of Cl(P_1): ",Pr[1]));
\\(3 split in k)-Test of principality of P1/P'1 and P1*P'1:
if((Mod(-m,3)==1 & ram==Ramified),
Sideal1=component(component(idealfactor(kacyc,3),1),1);
Sideal2=component(component(idealfactor(kacyc,3),1),2);
Sminus=idealdiv(kacyc,Sideal1,Sideal2);
Splus=idealmul(kacyc,Sideal1,Sideal2);
Pminus=bnfisprincipal(kacyc,Sminus);
Pplus=bnfisprincipal(kacyc,Splus);
print("Components of Cl(P1/P'1) and Cl(P1*P'1): ", Pminus[1]," ",Pplus[1]));

if(Delta==1,Cacyc=kacyc.clgp;d=matsize(Cacyc[2])[2];
G=nfgaloisconj(kacyc);Id=x;for(k=1,6,Z=G[k];ks=1;while(Z!=Id,
Z=nfgaloisapply(kacyc,G[k],Z);ks=ks+1);if(ks==3,s=G[k];break));
print("Algebraic norm of H_kacyc=",kacyc.cyc," H_k=",k.cyc);
for(i=1,d,X0=Cacyc[3][i];X=1;for(t=1,3,
Xs=nfgaloisapply(kacyc,s,X);X=idealmul(kacyc,X0,Xs));
Y=bnfisprincipal(kacyc,X)[1];Normalg=List;
for(j=1,d,v=valuation(Cacyc[2][j],3);
c=lift(Mod(Y[j],3^v));listput(Normalg,c,j));
print("Norm of the component ",i," of H_kacyc: ",Normalg)));
print(ListSigma))}
\end{verbatim}\ns

\subsection{PROGRAM II (Normal Split case)} \label{program2}
It assumes $m \equiv 3 \pmod 9$, $\CH_{k^*} \ne 1$ and $\rk(\CT_k) 
= \rk(\CH_{k^*})$ (equivalent to $\Ccl ({\mathfrak p}^*) \notin \CH_{k^*}^3$). 
The radical $W_{\chi^*}$ is still the group of pseudo-units $Y_{k^*}$.
We have proved in Theorem \ref{disjunction} that $k_1^\acyc/k$ 
is unramified, which simplifies the  program; nevertheless, we 
test the relation ${\sf hk3!=otbp}$ characterizing the non-ramification.
If $\CH_k$ is cyclic, the unique unramified cubic cyclic extension of $k$ 
is identified with the field discriminant corresponding to the polynomials 
$R^\acyc$ defining $k_1^\acyc/\Q$:

\smallskip
\ft\begin{verbatim}
\p 300
allocatemem(800000000)
{print("From Program II (Normal Split case)");Maxq=500;
ListSigma=List;Listm=List;Nm=4;
Listm=List([3513,1400187,1400709,1400790]);
\\List of fields k with total capitulation of H'_k
\\***choose between List and interval:***
\\for(km=1,Nm,m=Listm[km];
for(m=5,10^5,if(core(m)!=m,next);
if(Mod(m,9)!=3,next); \\***First condition***
Pstar=x^2-m/3;kstar=bnfinit(Pstar);hkstar=kstar.no;
if(Mod(hkstar,3)!=0,next); \\***Second condition***
P=x^2+m;k=bnfinit(P);hk=k.no;Delta=0;
Lstar=List;Cstar=kstar.clgp[2];rkstar=0;
d=matsize(Cstar)[2];for(j=1,d,C= Cstar[j];
if(Mod(C,3)==0,rkstar=rkstar+1;listput(Lstar,j)));
rk=0;expk=1;d=matsize(k.cyc)[2];for(j=1,d,C=k.cyc[j];
v=valuation(C,3);if(v>0,rk=rk+1;expk=max(expk,3^v)));
exptak=1;d=matsize(k.cyc)[2];for(j=1,d,C=k.cyc[j];
v=valuation(C,3); exptakC=C/3^v;exptak=lcm(exptakC,exptak));

\\STRUCTURE OF T_k AND COMPUTATION OF Val:
nu=valuation(expk,3)+2;
Kpnu=bnrinit(k,3^nu);Hpnu=Kpnu.cyc;e=matsize(Hpnu)[2];
Tk=List;ot=1;rt=0;for(j=3,e,c=Hpnu[j];v=valuation(c,3);
if(v>0,listput(Tk,3^v);rt=rt+1;ot=ot*3^v));
if(rt!=rkstar,next); \\***Third condition***
ow=1;if(Mod(m,9)==3,ow=3);Wk=ow;hk3=3^valuation(hk,3);
otbp=ot/ow;ram=Ramified;if(hk3!=otbp,ram=Unramified);
Val=valuation(expk*ot/(m*hk),3)+2;

\\WRITINGS OF THE MAIN INVARIANTS:
kro=kronecker(-m,3);Disc=m;if(Mod(-m,4)!=1,Disc=4*m);
print();print("m=",m," Disc=",Disc," kronecker(-m,3)=",kro,
" h_k=",hk," H_k=",k.cyc," T_k=",Tk," h_kstar=",hkstar,
" H_kstar=",kstar.cyc," #H_k=",hk3," #T_k^bp=",otbp,
" #W_k^bp=",Wk,"  k_1^ac/k is ",ram);

\\COMPUTATION OF THE nw RADICALS IN Lw:
Lw0=List;Nrad=rkstar+1;w0=kstar.fu[1];listput(Lw0,w0);
for(i=1,rkstar,Idstar=kstar.clgp[3][Lstar[i]];
alpha=bnfisprincipal(kstar,idealpow(kstar,Idstar,Cstar[Lstar[i]])); 
wi=Mod(kstar.zk[1]*alpha[2][1]+kstar.zk[2]*alpha[2][2],Pstar);
listput(Lw0,wi));Lw=List;nw=3^Nrad-1;LD=List;for(i=1,nw,
D=digits(i,3);if(D[1]!=2,listput(LD,D)));nw=nw/2;for(i=1,nw,
d=matsize(LD[i])[2];d0=Nrad+1-d;w=1;for(j=d0,Nrad,
w=w*Lw0[j]^LD[i][j-d0+1]);listput(Lw,w));print("Lw=",Lw);

\\RESEARCH OF THE SOLUTION AMONG THE nw RADICALS:
\\Begin J
for(J=1,nw,w=Lw[J];Nw=norm(w);ispower(Nw,3,&a);
Q=x^3-3*a*x-trace(w);Q=polredbest(Q);

\\RANK rk=1:
if(rk==1,R=polcompositum(P,Q)[1];
DiscR=nfdisc(R);if(valuation(DiscR,3)>3,
print("J=",J," w=",w," Q=",Q," does not define k_1^ac"));
if(valuation(DiscR,3)<=3,Delta=Delta+1;
kacyc=bnfinit(R);hkacyc=kacyc.no;
print("SOLUTION:J=",J," w=",w," Q^ac=",Q);
print("H_kacyc=",kacyc.cyc)));

\\RANK rk>1:
if(rk>1,Test=0;e0=2;dres=1;
forprime(q=5,Maxq,if(kronecker(-m,q)!=1,next);
A= component (component(idealfactor(k,q),1),1);
B=idealpow(k,A,exptak*expk);b=bnfisprincipal(k,B);
pib=Mod(k.zk[1]*b[2][1]+k.zk[2]*b[2][2],P)^(3^dres-1)-1;Log=pib;
t=1;while(t<=e0*(Val+log(t)+1),t=t+1;Log=Log-(-1)^t*pib^t/t);
Log=lift(Log);C1=polcoeff(Log,1);if(valuation(C1,3)<Val,next);
Qq=Q+O(q);if(polisirreducible(Qq)==1,print("J=",J," q=",q,
" Qq irreducible");Test=1;break));if(Test==0,Delta=Delta+1;
R=polcompositum(P,Q)[1];kacyc=bnfinit(R);hkacyc=kacyc.no;
print("SOLUTION:J=",J," w=",w," Q^ac=",Q);
print("H_kacyc=",kacyc.cyc))));
\\End J
if(Delta!=1,listput(ListSigma,m));

\\TEST ABOUT W_k^bp -- PRINT OF Val AND unit index a:
TestW=0;for(j=1,nw,w=Lw[j];Nw=norm(w);ispower(Nw,3,&a);
Q=x^3-3*a*x-trace(w);Q=polredbest(Q);
R=polcompositum(P,Q)[1];if(valuation(nfdisc(R),3)>3,TestW=1));
if(TestW==0,print("W_k^bp is not direct factor of T_k"));
if(TestW==1,print("W_k^bp is direct factor of T_k"));
print("Val=",Val," a=",3*3^-lift(Mod(valuation(hkacyc/hk,3),2)));

\\COMPUTATION OF THE PARTIAL CAPITULATIONS:
if(Delta==1,Cacyc=kacyc.clgp;d=matsize(Cacyc[2])[2];
G=nfgaloisconj(kacyc);Id=x;for(k=1,6,Z=G[k];ks=1;while(Z!=Id,
Z=nfgaloisapply(kacyc,G[k],Z);ks=ks+1);if(ks==3,s=G[k];break));
print("Algebraic norm of H_kacyc=",kacyc.cyc," H_k=",k.cyc);
for(i=1,d,X0=Cacyc[3][i];X=1;for(t=1,3,
Xs=nfgaloisapply(kacyc,s,X);X=idealmul(kacyc,X0,Xs));
Y=bnfisprincipal(kacyc,X)[1];Normalg=List;
for(j=1,d,v=valuation(Cacyc[2][j],3);
c=lift(Mod(Y[j],3^v));listput(Normalg,c,j));
print("Norm of the component ",i," of H_kacyc: ",Normalg))); 

\\STRUCTURE OF T_kstar, COMPUTATION OF R_kstar: 
Ustar=kstar.fu[1]^2;h=valuation(norm(Ustar-1),3);
nustar=valuation(expkstar,3)+h/2+2;
Kpnustar=bnrinit(kstar,3^nustar);Hpnustar=Kpnustar.cyc;
e=matsize(Hpnustar)[2];Tkstar=List;rtstar=0;otstar=1;
for(j=2,e,c=Hpnustar[j];v=valuation(c,3);if(v>0,
listput(Tkstar,3^v);rtstar=rtstar+1;otstar=otstar*3^v));
hkstar3=3^valuation(hkstar,3);Rkstar=otstar/hkstar3;
print("H_kstar=",kstar.cyc," T_kstar=",Tkstar," #R_kstar=",Rkstar);
if(rtstar==rkstar,print("R_kstar is not direct factor of T_kstar"));
if(rtstar!=rkstar,print("R_kstar is direct factor of T_kstar"));
print(ListSigma))}
\end{verbatim}\ns

\subsection{PROGRAM III (Special Split case)}\label{program3}

It concerns the case $m \equiv 3 \pmod 9$, $\CH_{k^*} \ne 1$, and 
$\Ccl ({\mathfrak p}^*) \in \CH_{k^*}^3$ (including the case ${\mathfrak p}^*$ 
$3$-principal) equivalent to $\rk(\CT_k) = \rk(\CH_{k^*})+1$ (see Theorem 
\ref{wp} for the definition of the radical $W_{\chi^*} = Y_{k^*} \oplus 
\langle w_{{\mathfrak p}^*} \rangle$, of $3$-rank $\rk(\CH_{k^*})+2$, 
with $w_{{\mathfrak p}^*}$ such that ${\mathfrak p}^* = 
(w_{{\mathfrak p}^*})\,{\mathfrak a}^{* 3}$, which is not a pseudo-unit 
nor the $S^*$-unit $\eta$):

\smallskip
\ft\begin{verbatim}
\p 300
allocatemem(800000000)
{print("From Program III (Special Split case)");Maxq=500;
ListSigma=List;Listm=List;Nm=14;
Listm=List([3387,4962,5862,7257,8139,8913,11217,13413,
13953,14862,17814,18714,19803,20703]);
\\List of fields k with total capitulation of H_k = Z/3Z
\\***choose between List and interval:***
\\for(km=1,Nm,m=Listm[km];
for(m=12,10^5,if(core(m)!=m,next);
if(Mod(m,9)!=3,next); \\***First condition***
Pstar=x^2-m/3;kstar=bnfinit(Pstar);
hkstar=kstar.no;
if(Mod(hkstar,3)!=0,next); \\***Second condition***
P=x^2+m;k=bnfinit(P);hk=k.no;Delta=0;
Lstar=List;Cstar=kstar.clgp[2];rkstar=0;
d=matsize(Cstar)[2];for(j=1,d,C= Cstar[j];
if(Mod(C,3)==0,rkstar=rkstar+1;listput(Lstar,j)));
rk=0;expk=1;d=matsize(k.cyc)[2];for(j=1,d,C=k.cyc[j];
v=valuation(C,3);if(v>0,rk=rk+1;expk=max(expk,3^v)));
exptakstar=1;d=matsize(kstar.cyc)[2];for(j=1,d,C=kstar.cyc[j];
v=valuation(C,3); exptakstarC=C/3^v;
exptakstar=lcm(exptakstarC,exptakstar));
exptak=1;d=matsize(k.cyc)[2];for(j=1,d,C=k.cyc[j];
v=valuation(C,3);exptakC=C/3^v;exptak=lcm(exptakC,exptak));

\\STRUCTURE OF T_k AND COMPUTATION OF Val:
nu=valuation(expk,3)+2;
Kpnu=bnrinit(k,3^nu);Hpnu=Kpnu.cyc;e=matsize(Hpnu)[2];
Tk=List;ot=1;rt=0;for(j=3,e,c=Hpnu[j];v=valuation(c,3);
if(v>0,listput(Tk,3^v);rt=rt+1;ot=ot*3^v));
if(rt!=rkstar+1,next); \\***Third condition***
ow=1;if(Mod(m,9)==3,ow=3);Wk=ow;hk3=3^valuation(hk,3);
otbp=ot/ow;ram=Ramified;if(hk3!=otbp,ram=Unramified);
Val=valuation(expk*ot/(m*hk),3)+2;

\\WRITINGS OF THE MAIN INVARIANTS:
kro=kronecker(-m,3);Disc=m;if(Mod(-m,4)!=1,Disc=4*m);
print();print("m=",m," Disc=",Disc," kronecker(-m,3)=",kro,
" h_k=",hk," H_k=",k.cyc," T_k=",Tk," h_kstar=",hkstar,
" H_kstar=",kstar.cyc," #H_k=",hk3," #T_k^bp=",otbp,
" #W_k^bp=",Wk,"  k_1^ac/k is ",ram);

\\COMPUTATION OF THE nw RADICALS IN Lw:
Lw0=List;Nrad=rkstar+2;

\\Begin
\\COMPUTATION OF w_p^*:
Sideal=component(component(idealfactor(kstar,3),1),1);
Sideal=idealpow(kstar,Sideal,exptakstar);
Z=kstar.clgp[3];
Y=bnfisprincipal(kstar,Sideal)[1];d=matsize(Y)[1];Ideal=1;
for(i=1,d,Ideal=idealmul(kstar,Ideal,idealpow(kstar,Z[i],Y[i])));
X=idealdiv(kstar,Sideal,Ideal);c=bnfisprincipal(kstar,X);
Gamma=Mod(kstar.zk[1]*c[2][1]+kstar.zk[2]*c[2][2],Pstar);
wstar=Gamma*norm(Gamma);listput(Lw0,wstar);

\\UNITS and PSEUDO-UNITS:
w0=kstar.fu[1];listput(Lw0,w0);
for(i=1,rkstar,Idstar=kstar.clgp[3][Lstar[i]];
alpha=bnfisprincipal(kstar,idealpow(kstar,Idstar,Cstar[Lstar[i]])); 
wi=Mod(kstar.zk[1]*alpha[2][1]+kstar.zk[2]*alpha[2][2],Pstar);
listput(Lw0,wi));

\\COMPLETE LIST OF RADICALS:
Lw=List;nw=3^Nrad-1;LD=List;for(i=1,nw,
D=digits(i,3);if(D[1]!=2,listput(LD,D)));nw=nw/2;for(i=1,nw,
d=matsize(LD[i])[2];d0=Nrad+1-d;w=1;for(j=d0,Nrad,
w=w*Lw0[j]^LD[i][j-d0+1]);listput(Lw,w));print("Lw=",Lw);
\\End

\\RESEARCH OF THE SOLUTION AMONG THE nw RADICALS:
\\Begin J
for(J=1,nw,w=Lw[J];Nw=norm(w);ispower(Nw,3,&a);
Q=x^3-3*a*x-trace(w);Q=polredbest(Q);

\\UNRAMIFIED CASE with RANK rk=1:
if(hk3>otbp & rk==1,Disc=nfdisc(Q);if(valuation(Disc,3)>1,
print("J=",J," w=",w," Q=",Q," does not define k_1^ac"));
if(valuation(Disc,3)<=1,Delta=Delta+1;
R=polcompositum(P,Q)[1];kacyc=bnfinit(R);hkacyc=kacyc.no;
print("SOLUTION:J=",J," w=",w," Q^ac=",Q);
print("H_kacyc=",kacyc.cyc)));

\\SPECIAL CASE:
if(hk3==otbp || rk!=1,Test=0;e0=2;dres=1;
forprime(q=5,Maxq,if(kronecker(-m,q)!=1,next);
A=component(component(idealfactor(k,q),1),1);
B=idealpow(k,A,exptak*expk);b=bnfisprincipal(k,B);
pib=Mod(k.zk[1]*b[2][1]+k.zk[2]*b[2][2],P)^(3^dres-1)-1;Log=pib;
t=1;while(t<=e0*(Val+log(t)+1),t=t+1;Log=Log-(-1)^t*pib^t/t);
Log=lift(Log);C1=polcoeff(Log,1);if(valuation(C1,3)<Val,next);
Qq=Q+O(q);if(polisirreducible(Qq)==1,print("J=",J," q=",q,
" Qq irreducible");Test=1;break));if(Test==0,Delta=Delta+1;
R=polcompositum(P,Q)[1];kacyc=bnfinit(R);hkacyc=kacyc.no;
print("SOLUTION:J=",J," w=",w," Q^ac=",Q);
print("H_kacyc=",kacyc.cyc))));
\\End J
if(Delta!=1,listput(ListSigma,m));

\\TEST ABOUT W_k^bp -- PRINT OF Val AND unit index a:
if(ram==Ramified,
if(rt==rk+1,print("W_k^bp is direct factor of T_k"));
if(rt==rk,print("W_k^bp is not direct factor of T_k")));
if(ram==Unramified,
TestW=0;for(j=1,nw,w=Lw[j];Nw=norm(w);ispower(Nw,3,&a);
Q=x^3-3*a*x-trace(w);Q=polredbest(Q);
R=polcompositum(P,Q)[1];if(valuation(nfdisc(R),3)>3,TestW=1));
if(TestW==0,print("W_k^bp is not direct factor of T_k"));
if(TestW==1,print("W_k^bp is direct factor of T_k")));
print("Val=",Val," a=",3*3^-lift(Mod(valuation(hkacyc/hk,3),2)));

\\COMPUTATION OF THE PARTIAL CAPITULATIONS:
\\Test of principality of P_1:
if(ram==Ramified,
Sideal=component(component(idealfactor(kacyc,3),1),1);
Pr=bnfisprincipal(kacyc,Sideal);
print("Components of Cl(P_1): ",Pr[1]));

if(Delta==1,Cacyc=kacyc.clgp;d=matsize(Cacyc[2])[2];
G=nfgaloisconj(kacyc);Id=x;for(k=1,6,Z=G[k];ks=1;while(Z!=Id,
Z=nfgaloisapply(kacyc,G[k],Z);ks=ks+1);if(ks==3,s=G[k];break));
print("Algebraic norm of H_kacyc=",kacyc.cyc," H_k=",k.cyc);
for(i=1,d,X0=Cacyc[3][i];X=1;for(t=1,3,
Xs=nfgaloisapply(kacyc,s,X);X=idealmul(kacyc,X0,Xs));
Y=bnfisprincipal(kacyc,X)[1];Normalg=List;
for(j=1,d,v=valuation(Cacyc[2][j],3);
c=lift(Mod(Y[j],3^v));listput(Normalg,c,j));
print("Norm of the component ",i," of H_kacyc: ",Normalg)));

\\STRUCTURE OF T_kstar, COMPUTATION OF R_kstar: 
Ustar=kstar.fu[1]^2;h=valuation(norm(Ustar-1),3);
nustar=valuation(expkstar,3)+h/2+2;
Kpnustar=bnrinit(kstar,3^nustar);Hpnustar=Kpnustar.cyc;
e=matsize(Hpnustar)[2];Tkstar=List;rtstar=0;otstar=1;
for(j=2,e,c=Hpnustar[j];v=valuation(c,3);if(v>0,
listput(Tkstar,3^v);rtstar=rtstar+1;otstar=otstar*3^v));
hkstar3=3^valuation(hkstar,3);Rkstar=otstar/hkstar3;
print("H_kstar=",kstar.cyc," T_kstar=",Tkstar," #R_kstar=",Rkstar);
if(rtstar==rkstar,print("R_kstar is not direct factor of T_kstar"));
if(rtstar!=rkstar,print("R_kstar is direct factor of T_kstar"));
print(ListSigma))}
\end{verbatim}\ns

\smallskip
The only cases, in the Special Split case, where $\CW_k^\bp$ is 
not direct factor of $\CT_k$, in the tested interval, are for 
$m=23178$, $92685$, $93207$, $94998$, $116751$, $200406$.

\subsection{PROGRAM IV (Trivial case)}\label{program4}

We assume that $\CH_{k^*} = 1$; then $Y_{k^*} = \langle \varepsilon^* \rangle$, and 
from the relation \ref{reflection} of Theorem \ref{primarity}, $\rk(\CH_k) = 1$ if and only 
if $\varepsilon^*$ is $3$-primary, generating the unique unramified cubic subfield of 
$H_k^\nr$; this field is $k_1^\acyc$ if and only if there is non-disjunction. If $\varepsilon^*$ 
is not $3$-primary, $\CH_k = 1$, all the cubic subfields of $H_k^\nr$ are ramified. 
Whence:

\smallskip
\quad $\bullet$\ If $m \not \equiv 3 \pmod 9$ (i.e., $3$ inert or ramified in $k^*$), 
$W_{\chi^*} = \{\varepsilon^*\}$. 

\smallskip
\quad $\bullet$\ If $m \equiv 3 \pmod 9$, $W_{\chi^*} = \{\varepsilon^*, \eta^*\}$
($w_{{\mathfrak p}^*}^*$ is equivalent to $\eta^*$ since $k^*$ is $3$-principal):

\smallskip
\ft\begin{verbatim}
\p 300
allocatemem(800000000)
{print("From Program IV (Trivial case)");Maxq=500;
ListSigma=List;Listm=List;Nm=52;
Listm=List([23,26,29,31,38,53,59,61,83,87,89,106,107,109,
110,118,129,174,201,246,255,309,318,327,417,453,543,597,
606,714,741,759,813,885,930,966,993,1011,1173,1191,1218,
1254,1362,1371,1506,1515,1533,1713,1821,1830,1902,2001]);
\\List of fields k such that H_k is cyclic non trivial
\\***choose between List and interval:***
\\for(km=1,Nm,m=Listm[km];
for(m=1,10^4,if(m==3,next);if(core(m)!=m,next);
Pstar=x^2-3*m;if(Mod(m,3)==0,Pstar=x^2-m/3);
kstar=bnfinit(Pstar);hkstar=kstar.no;
if(Mod(hkstar,3)==0,next); \\***Unique condition***
P=x^2+m;k=bnfinit(P);hk=k.no;Delta=0;
rk=0;expk=1;if(Mod(hk,3)==0,rk=1;expk=3^valuation(hk,3));
exptak=1;d=matsize(k.cyc)[2];for(j=1,d,C=k.cyc[j];
v=valuation(C,3);exptakC=C/3^v;exptak=lcm(exptakC,exptak));

\\STRUCTURE OF T_k AND COMPUTATION OF Val:
nu=valuation(expk,3)+2;
Kpnu=bnrinit(k,3^nu);Hpnu=Kpnu.cyc;e=matsize(Hpnu)[2];
Tk=List;ot=1;rt=0;for(j=3,e,c=Hpnu[j];v=valuation(c,3);
if(v>0,listput(Tk,3^v);rt=rt+1;ot=ot*3^v));print();
ow=1;if(Mod(m,9)==3,ow=3);Wk=ow;hk3=3^valuation(hk,3);
otbp=ot/ow;ram=Ramified;if(hk3!=otbp,ram=Unramified);
Val=valuation(expk*ot/(m*hk),3)+2;

\\WRITINGS OF THE MAIN INVARIANTS:
kro=kronecker(-m,3);Disc=m;if(Mod(-m,4)!=1,Disc=4*m);
print();print("m=",m," Disc=",Disc," kronecker(-m,3)=",kro,
" h_k=",hk," H_k=",k.cyc," T_k=",Tk," #H_k=",hk3," #T_k^bp=",otbp,
" #W_k^bp=",Wk,"  k_1^ac/k is ",ram);

\\COMPUTATION OF THE nw RADICALS IN Lw:
Lw0=List;w0=kstar.fu[1];listput(Lw0,w0);
if(Mod(m,9)!=3,Lw=Lw0;Nrad=1;nw=1);
if(Mod(m,9)==3,SList=component(idealfactor(kstar,3),1);
Sunit=component(component(bnfsunit(kstar,SList),1),1);
w3=Mod(Sunit,Pstar);w3=w3*norm(w3);listput(Lw0,w3);
Lw=List([w0,w3,w0*w3,w0^2*w3]);Nrad=2;nw=4);
print("Lw=",Lw);

\\RESEARCH OF THE SOLUTION AMONG THE nw RADICALS:
\\Begin J
for(J=1,nw,w=Lw[J];Nw=norm(w);ispower(Nw,3,&a);
Q=x^3-3*a*x-trace(w);Q=polredbest(Q);

\\UNRAMIFIED CASE:
if(hk3>otbp,Disc=nfdisc(Q);if(valuation(Disc,3)>1,
print("J=",J," w=",w," Q=",Q," does not define k_1^ac"));
if(valuation(Disc,3)<=1,Delta=Delta+1;
R=polcompositum(P,Q)[1];kacyc=bnfinit(R);hkacyc=kacyc.no;
print("SOLUTION:J=",J," w=",w," Q^ac=",Q);
print("H_kacyc=",kacyc.cyc)));

\\RAMIFIED CASE:
if(hk3==otbp,Test=0;e0=1;if(Mod(m,3)==0,e0=2);
dres=1;if(kro==-1,dres=2);
forprime(q=5,Maxq,if(kronecker(-m,q)!=1,next);
A=component(component(idealfactor(k,q),1),1);
B=idealpow(k,A,exptak*expk);b=bnfisprincipal(k,B);
pib=Mod(k.zk[1]*b[2][1]+k.zk[2]*b[2][2],P)^(3^dres-1)-1;Log=pib;
t=1;while(t<=e0*(Val+log(t)+1),t=t+1;Log=Log-(-1)^t*pib^t/t);
Log=lift(Log);C1=polcoeff(Log,1);if(valuation(C1,3)<Val,next);
Qq=Q+O(q);if(polisirreducible(Qq)==1,print("J=",J," q=",q,
" Qq irreducible");Test=1;break));if(Test==0,Delta=Delta+1;
R=polcompositum(P,Q)[1];kacyc=bnfinit(R);hkacyc=kacyc.no;
print("SOLUTION:J=",J," w=",w," Q^ac=",Q);
print("H_kacyc=",kacyc.cyc))));
\\End J
if(Delta!=1,listput(ListSigma,m));

\\TEST ABOUT W_k^bp -- PRINT OF Val AND unit index a:
if(Wk==1,print("W_k^bp is not direct factor of T_k"));
if(Wk!=1,
if(ram==Ramified,
if(rt==rk+1,print("W_k^bp is direct factor of T_k"));
if(rt==rk,print("W_k^bp is not direct factor of T_k")));
if(ram==Unramified,
if(otbp==1, print("W_k^bp is direct factor of T_k"));
if(otbp!=1, print("W_k^bp is not direct factor of T_k"))));
print("Val=",Val," a=",3*3^-lift(Mod(valuation(hkacyc/hk,3),2)));

\\COMPUTATION OF THE PARTIAL CAPITULATIONS:
\\(3 non-split in k) -Test of principality of P_1:
if((Mod(-m,3)!=1 & ram==Ramified),
Sideal=component(component(idealfactor(kacyc,3),1),1);
Pr=bnfisprincipal(kacyc,Sideal);
print("Components of Cl(P_1): ",Pr[1]));
\\(3 split in k)-Test of principality of P1/P'1 and P1*P'1:
if((Mod(-m,3)==1 & ram==Ramified),
Sideal1=component(component(idealfactor(kacyc,3),1),1);
Sideal2=component(component(idealfactor(kacyc,3),1),2);
Sminus=idealdiv(kacyc,Sideal1,Sideal2);
Splus=idealmul(kacyc,Sideal1,Sideal2);
Pminus=bnfisprincipal(kacyc,Sminus);
Pplus=bnfisprincipal(kacyc,Splus);
print("Components of Cl(P1/P'1) and Cl(P1*P'1): ", Pminus[1]," ",Pplus[1]));

if(Delta==1,Cacyc=kacyc.clgp;d=matsize(Cacyc[2])[2];
G=nfgaloisconj(kacyc);Id=x;for(k=1,6,Z=G[k];ks=1;while(Z!=Id,
Z=nfgaloisapply(kacyc,G[k],Z);ks=ks+1);if(ks==3,s=G[k];break));
print("Algebraic norm of H_kacyc=",kacyc.cyc," H_k=",k.cyc);
for(i=1,d,X0=Cacyc[3][i];X=1;for(t=1,3,
Xs=nfgaloisapply(kacyc,s,X);X=idealmul(kacyc,X0,Xs));
Y=bnfisprincipal(kacyc,X)[1];Normalg=List;
for(j=1,d,v=valuation(Cacyc[2][j],3);
c=lift(Mod(Y[j],3^v));listput(Normalg,c,j));
print("Norm of the component ",i," of H_kacyc: ",Normalg)));
print(ListSigma))}
\end{verbatim}\ns

\section{Disproof of assertions about \texorpdfstring
{$k^\acyc \cap H_k^\nr/k$}{Lg} when \texorpdfstring{$\CH_k$}{Lg} is cyclic}

Some claims in the literature need rectifications and comments; they are 
related to the unramified sub-extension $k^\acyc \cap H_k^\nr$ in the case 
where $\CH_k$ is cyclic non-trivial.

\subsection{Case \texorpdfstring{$\CH_k \ne 1$}{Lg}, cyclic, and 
\texorpdfstring{$H_k^\nr$}{Lg}
not contained in \texorpdfstring{$k^\acyc$}{Lg}}

Besides the known cases of \cite[Theorem 6.1]{KW2023}, generalized in 
\cite[Theorem 5.6]{Gra2024b} and recalled in Proposition \ref{noncyclic},
about the non-cyclicity of $\CH_{k_n^\acyc}$ for $n \geq n_0^{}$, the cyclic 
case when $k^\acyc \cap H_k^\nr \ne k$ and $H_k^\nr \not\subset k^\acyc$
has some subtleties:

\smallskip
In \cite[Theorem 1]{Oh2015} the Author claims that
( when $m \not\equiv 3 \pmod 9$) any subfield $L$ of $H_k^\nr$,
such that $L$ is cyclic over $k$ and extension of
$k_e^\acyc := k^\acyc \cap H_k^\nr$ for $e \geq 1$ (case
of non linear disjunction of $H_k^\nr$ and $k^\acyc$ over $k$), 
is equal to $k_e^\acyc$; then the Author deduces (in Corollary 1, 
from Theorem 1), that under the same conditions, the linear 
disjunction of $k^\acyc$ and $H_k^\nr$ is equivalent to 
$\rk(\CH_{k^*}) = \rk(\CH_{k})$, which may be false, e.g., $m=362$
where:

\smallskip
\centerline{${\sf H_k=[18]\ \ T_k=[3]\ \ H_{kstar}=[6]\ \ T_k^\bp=[3]\ \  
W_k^\bp=[\ ]\  \  k_1^\acyc/k\  is\  Unramified}$,}

\smallskip\noindent
since from Theorem \ref{TH}\,(1), $\rk(\CT_k) = \rk(\CH_{k^*})$;
the result does not hold. Program I gives three ramified cyclic cubic 
extensions of $k$ (discriminants $2^9\cdot 3^8 \cdot 181^3$) and the 
unique unramified one of discriminant $2^9 \cdot 181^3$.

\smallskip
Nevertheless, Theorem 3, referred to Minardi's thesis, comes from 
classical reflection theorems, and we have given a general statement in 
Theorem \ref{TH}. 

\smallskip
For counterexamples, consider for instance $L=H_k^\nr$ when 
$\CH_k$ is cyclic of order $p^e$, $e \geq 2$. 
Let $H'^\bp_k =: k^\acyc H_0$ with $\Gal(H'^\bp_k/k^\acyc)
=: \langle s \rangle$, and $\Gal(H'^\bp_k/H_0) =: 
\langle \sigma \rangle$ (Schema \eqref{schemalog}); then 
we may assume $H_k^\nr$ fixed by $\sigma^{p^{e-1}}\!\! \cdot s$ and 
$k_{e-1}^\acyc$ fixed by $\langle \sigma^p, s \rangle$, whence 
$\CT_k^\bp \simeq \Z/p\Z$.

\smallskip
From Program I we get the case of $m=1879$:

\begin{verbatim}
m=1879 Disc=1879 kronecker(-m,3)=-1 H_k=[27] T_k=[3] k_1^ac/k is Unramified
Q^ac=x^3-x^2-2*x-16  H_kacyc=[36,4]
\end{verbatim}

These data immediately give $[k^\acyc \cap H_k^\nr : k] = 9$ but $L=H_k^\nr$ 
is not contained in $k^\acyc$. 

\smallskip
Moreover, one can give an independent checking 
by computing, from its definition, the index $[\Log_3(I_k \otimes \Z_3)^- : 
\Log_3(P_k\otimes \Z_3)^-]$; since $\BH_k$ is generated by the class of the prime 
ideal ${\sf A = [7, 3; 0, 1]}$ above $7$, ${\sf A^{27}}$ is generated by:

\smallskip
\centerline{${\sf a=Mod(54389650573 +11613066642*(-1/2+1/2*x), x^2+ 1879)}$. }

\smallskip
Computing $\Log_3(a)$ with the usual series gives
$\Log_3(a)^- \equiv 36 \sqrt{-1879}\! \pmod{81}$, whence:
$$\Log_3(I_k \otimes \Z_3)^- = \Log_3(A)^- \,\Z_3 + \Log_3(P_k\otimes \Z_3)^- 
= \ffrac{9}{27} \,\Z_3 \sqrt{-1879} + \Log_3(P_k\otimes \Z_3)^-; $$ 
$\Log_3(P_k\otimes \Z_3)^- = 3\,\Z_3 \sqrt{-1879}$ (Lemma \ref{logP})
gives $[\Log_3(I_k \otimes \Z_3)^- : \Log_3(P_k\otimes \Z_3)^- ] = 9$. 

\smallskip
Program I (case $m \not \equiv 3 \pmod 9$) gives $m=362$, $367$, 
$419$, $1018$, $1087$, $1193$$,\,\ldots$.

\smallskip
Program II gives no solutions (indeed, the conditions $\rk(\CH_k)=\rk(\CH_{k^*})=1$ 
implying $Y_{k^*}/Y_{k^*}^\prim \ne 1$, whence $\CW_k^\bp$ direct factor, are 
incompatible with $\rk(\CH_{k^*}) = \rk(\CT_k)=2$).

\smallskip
Program III gives $m =7257$, $8913$, $22161$, $23259$, 
$25635$, $27102$, $34401$$,\,\ldots$.

\smallskip
Program IV gives $m=1821$, $2361$, $5007$, $5430$, 
$7113$, $7545$$,\,\ldots$. 

\smallskip
This shows that Schema \eqref{schemalog} does exist with cyclic 
$\CH_k$'s of orders $\geq 9$.

\subsection{Case \texorpdfstring{$\CH_k \ne 1$}{Lg}, cyclic, and
\texorpdfstring{$H_k^\nr$}{Lg} contained in 
\texorpdfstring{$k^\acyc$}{Lg}}\label{Hcyclic}

So, in that case, $\CT_k^\bp = 1$. We have the following 
property giving an example of triviality of the Iwasawa invariants 
for $k^\acyc$ (see \cite[Remarks, p.\,286]{Fu2013} 
using the Lang proof of Chevalley--Herbrand formula \cite{Che1933}
\footnote{In the literature, references for this formula are rather folkloric 
and consist, most often, in citing authors having used or proved it again. 
The modern cohomological proof in Lang's book is of course the most 
useful, even if Lang does not explain correctly the contributions. 
It may be useful to recall historical facts, quoting the following excerpt 
from Chevalley's Thesis (footnote 19, p. 402, originally in french): 

{\it The calculation made here is the generalization to the arbitrary cyclic 
case of the calculation made by Takagi in the cyclic case of prime degree, 
a calculation whose essential idea is already found in Hilbert's Zahlbericht
in the proof of the following theorem: if a relatively cyclic extension $K$ of 
prime degree of $k$ is unramified, there is at least one ideal of $k$ which is 
not principal in $k$ but becomes principal in $K$. The extension to the cyclic 
case of arbitrary degree was made possible by Herbrand's unit theorem.}

This explains why I use the expression: "Chevalley--Herbrand formula", 
once and for all.}).

\begin{theorem}\label{CHcyclic}
Let $p \geq 3$ non-split in $k$. Assume that $\CH_k \simeq \Z/p^e \Z$
and that $H_k^\nr \subset k^\acyc$. Then $\CH_{k_{n}^\acyc} = 1$
for all $n \geq e$ and consequently, $\lambda_p(k^\acyc/k) = \mu_p(k^\acyc/k) = 
\nu_p(k^\acyc/k) = 0$. Moreover, every unit of $H_k^\nr$, local norm in 
$k_{n}^\acyc/H_k^\nr$, is the $(p^{n-e})^{\rm th}$-power of a unit of $H_k^\nr$.
\end{theorem}

\begin{proof}
Since $k^\acyc/k_e^\acyc$ is totally ramified,
Chevalley--Herbrand formula in $k_{n}^\acyc/k$ and 
$k_n^\acyc/k_e^\acyc$ give the following results:

\unitlength=1.15cm 
\begin{equation}\label{CH}
\begin{aligned}
\vbox{\hbox{\hspace{-6.5cm}
\begin{picture}(10.0,4.35)
\put(3.3,2.3){$H_k^\nr\! =\! k_e^\acyc$}
\put(3.3,4.45){$k_{n}^\acyc$}
\put(3.45,0.2){$k$}
\put(1.8,0.2){$\Q$}
\put(2.25,0.1){\hbox{\tiny$p\,{\rm non\,split}$}}
\put(3.50,0.5){\line(0,1){1.7}}
\put(3.50,2.6){\line(0,1){1.7}}
\put(2.15,0.3){\line(1,0){1.2}}
\bezier{300}(3.9,0.3)(6.0,2.3)(3.9,4.4)
\bezier{200}(3.9,2.5)(4.2,3.4)(3.9,4.3)
\put(4.1,3.0){$g_{n-e}$}
\put(2.8,3.3){$p^{n-e}$}
\put(3.08,1.3){$p^e$}
\put(5.0,2.3){$G_{n}$}
\put(5.6,2.8){$\ds (1)\  \order \CH_{k_{n}^\acyc}^{G_{n}} =
\order \CH_k \times \frac{p^{n-e}}{p^{n} \times 1} = 1$}
\put(5.6,1.8){$\ds (2)\  \order \CH_{k_{n}^\acyc}^{g_{n-e}} =
1 \times \frac{(p^{n-e})^{p^e}}{p^{n-e} \times (E_{H_k^\nr} : 
E_{H_k^\nr} \cap \CN_{k_{n}^\acyc/H_k^\nr})} = 1$,}
\put(5.6,1.1){\hbox{where $\CN_{k_{n}^\acyc/H_k^\nr}$ is
the group of local norms in $k_{n}^\acyc/H_k^\nr$,}}
\put(5.6,0.6){\hbox{hence a global norm in 
 $k_{n}^\acyc/H_k^\nr$}}
\end{picture} }}
\end{aligned}
\end{equation}
\unitlength=1.0cm 

Formula (1) holds for all $n \geq e$; then $k_{n}^\acyc$ is $p$-principal.
So, the unique prime dividing $p$ in $k$ totally splits in 
$k_e^\acyc$, which implies Formula (2) with $\CH_{k_e^\acyc} = 1$.
The comparison of the formulas leads to 
$(E_{H_k^\nr} : E_{H_k^\nr} \cap \CN_{k_n^\acyc/H_k^\nr}) = 
(p^{n-e})^{p^e-1}$, hence $E_{H_k^\nr} \cap \CN_{k_{n-e}^\acyc/H_k^\nr} 
= (E_{H_k^\nr})^{p^{n-e}}$ by maximality of the index since the $\Z$-rank
of $E_{H_k^\nr}$ is $p^e-1$.
\end{proof}

The assertion \cite[Theorem 2]{Oh2015} is incorrect, probably due to a 
misinterpretation of the distribution of the inertia groups of the $p$-places of 
$H_k^\nr$ in the compositum $\wt {H_k^\nr}$ of the $\Z_p$-extensions of 
$H_k^\nr$, from the action of complex conjugation $\tau$ in $\Gal(\wt {H_k^\nr}/
H_k^\nr)$. I thank Jaulent having answered me, in a personal communication 
\cite{Jau2025}, what would be a proof, in this Galois context, of the total 
ramification of $\wt {H_k^\nr}/H_k^\nr$ contrary to the reasoning of the Author; 
Jaulent proves that there exists a unique $p$-place of $H_k^\nr$, fixed by 
$\tau$, and $(p^e-1)/2$ pairs of conjugate places, giving rise to a direct compositum 
of the $p^e$ inertia groups in $\Gal(\wt {H_k^\nr}/H_k^\nr)$, which implies the total 
ramification of $\wt {H_k^\nr}/H_k^\nr$. This method is particularly tricky 
compared to the use of Chevalley--Herbrand formula.

\begin{remark}
We still assume $p$ non split in $k$.
If $n \leq e$, $\order \CH_{k_n^\acyc}^{G_n} = p^{e-n}$ 
with $\CH_{k_n^\acyc}^{G_n} = \BJ_{k_n^\acyc/k} (\CH_k)$
since $\CH_{k_n^\acyc}^\ram = 1$; so this yields
$\order \BN_{k_n^\acyc/k} (\CH_{k_n^\acyc}^{G_n}) = p^{e-n}$.
The order of the second element of the filtration of $\CH_{k_1^\acyc}$ 
(Definition \ref{filtre}) is $\order\CH_{k_n^\acyc}^2 = 
\order\CH_{k_n^\acyc}^1 \times \frac{p^{e-n}}{p^{e-n}} = 
\order\CH_{k_n^\acyc}^1$, whence $\order \CH_{k_n^\acyc} 
= p^{e-n}$ for $n \leq e$, with $\CH_{k_n^\acyc}=\BJ_{k_n^\acyc/k} (\CH_k)$.

\smallskip
When $p$ splits in $k$, see \cite[Lemme 4]{Jau2024a} 
for the properties of totally $p$-adic fields.
\end{remark}

\section{Capitulations of \texorpdfstring{$\CH_k$}{Lg} in the anti-cyclotomic
\texorpdfstring{$\Z_p$-extension of $k$}{Lg}}\label{verifcap}

We begin by an elementary necessary condition which gives rise
to a severe constraint which is in fact a crucial point of the problem:

\begin{proposition}\label{NCcap1}
Assume that $k^\acyc/k$ is totally ramified from the layer $k_e^\acyc$, 
$e \geq 0$. Then a necessary condition for the complete capitulation 
of $\CH_k$ in $k^\acyc$ is the existence of $n_0^{} \geq e$ such that 
$\order \CH_{k_n^\acyc}^\ram = \order \CH_k \times p^{(n - e) 
\cdot \order S_k - n}$ for all $n \geq n_0^{}$, where $\CH_{k_n^\acyc}^\ram$ is the 
subgroup generated by the $p$-classes of the ideals $\prod_{{\mathfrak P}_n
\mid {\mathfrak p}}{\mathfrak P}_n$, for ${\mathfrak p} \in S_k$.
\end{proposition}

\begin{proof}
Since the group of units of $k$ is trivial one has, for all $n \geq 0$, the formula:
\begin{equation*}
\CH_{k_n^\acyc}^{G_n} = \BJ_{k_n^\acyc/k}(\CH_k) \cdot \CH_{k_n^\acyc}^\ram,
\end{equation*} 

\noindent
as particular case of the classical exact sequence in a $p$-cyclic extension $L/K$:
\begin{equation}\label{SECH}
1 \to \BJ_{L/K}(\CH_K) \cdot \CH_{L}^\ram \too \CH_{L}^{\Gal(L/K)} 
\too E_K \cap \BN_{L/K} (L^\times)/\BN_{L/K} (E_L) \to 1.
\end{equation}

Then the Chevalley--Herbrand formula, in $k_n^\acyc/k$, $n \geq e$, is
$\order \CH_{k_n^\acyc}^{G_n} = \order \CH_k \times 
p^{(n - e) \cdot \order S_k - n}$, where $S_k$ contains one 
or two $p$-places. So we have, for all $n \geq e$, 
$\order \big(\BJ_{k_n^\acyc/k}(\CH_k) \cdot 
\CH_{k_n^\acyc}^\ram \big) =  \order \CH_k \times p^{(n - e) 
\cdot \order S_k - n}$. 
The capitulation of $\CH_k$ in $k^\acyc$ implies the existence of $n_0^{}$ large 
enough such that $\order \CH_{k_n^\acyc}^\ram = \order \CH_k \times 
p^{(n - e) \cdot \order S_k - n}$ for all $n \geq n_0^{}$, thus 
$\order \CH_{k_n^\acyc}^\ram = \order \CH_k$ (resp. $\order \CH_k \times p^n$) 
in the case $\order S_k = 1$ (resp. $\order S_k = 2$) totaly ramified in $k^\acyc/k$. 
\end{proof}

\begin{remark}\label{Pstructure}
Let ${\mathfrak P}'_n := {\mathfrak P}_n^\tau$ be the complex conjugate of 
${\mathfrak P}_n$. If $p$ does not split in $k$, $\langle \Ccl_n({\mathfrak P}_n \rangle$ 
is a plus component and $\BJ_{k_n^\acyc/k}(\CH_k)$ a minus component.
If $p$ splits in $k$, $\BJ_{k_n^\acyc/k}(\CH_k)$ and $\langle \Ccl_n({\mathfrak P}_n 
\cdot {\mathfrak P}'^{-1}_n)\rangle$ are minus components, while 
$\langle \Ccl_n({\mathfrak P}_n \cdot {\mathfrak P}'_n) \rangle$ is a plus component. 
So, $\CH_k$ capitulates (partially or not) in $k_n^\acyc$ as soon as 
$\langle \Ccl_n({\mathfrak P}_n \rangle$,
$\langle \Ccl_n({\mathfrak P}_n \cdot {\mathfrak P}'^{-1}_n)\rangle$ and
$\langle \Ccl_n({\mathfrak P}_n \cdot {\mathfrak P}'_n) \rangle$ have suitable 
structures implying $\order \BJ_{k_n^\acyc/k}(\CH_k) < \order \CH_k$; 
for this the following decompositions may force the result:
$$\CH_{k_n^\acyc}^{G_n} = \BJ_{k_n^\acyc/k}(\CH_k) \plus \,
\langle \Ccl_n({\mathfrak P}_n \rangle, \  {\rm resp.} \  \big[ \BJ_{k_n^\acyc/k}(\CH_k) 
\cdot \langle \Ccl_n({\mathfrak P}_n \cdot {\mathfrak P}'^{-1}_n)\rangle \big] \plus \,
\langle \Ccl_n({\mathfrak P}_n \cdot {\mathfrak P}'_n) \rangle, $$
with $\order \CH_{k_n^\acyc}^{G_n} = \order \CH_k \times p^{n\,(\order S_k - 1)}$
for $k^\acyc/k$ totally ramified. Let's examine the two possibilities:
\end{remark}

\subsection{Capitulation of \texorpdfstring{$\CH_k$}{Lg} in 
\texorpdfstring{$k^\acyc/k$}{Lg} when
\texorpdfstring{$p$ does not split in $k$}{Lg}}

We assume that $p \geq 3$ does not split in $k$ and that $k^\acyc/k$ is 
totally ramified. From the above remark, a capitulation (partial or not) in 
$k_n^\acyc$, $n \geq 1$, is equivalent to the non-$p$-principality of 
${\mathfrak P}_n$.

\smallskip
The case $p=3$, $n = 1$, $\CH_k \simeq \Z/3\Z$, may be illustrated 
as follows; we have obtained many capitulations, where, as predicted 
by the Kundu--Washington result and \cite[Theorem 5.6)]{Gra2024b}, when
$\CH_{k_1^\acyc} \simeq \Z/3\Z \times \Z/3\Z$ or $\Z/9\Z \times \Z/9\Z$. 
This holds for the following cases where ${\mathfrak P}_1$ is never 
$3$-principal:

\smallskip\noindent
$m \in$\,$\{$$298$, $397$, $622$, $643$, $685$, $706$, $771$,
$835$,\,$\ldots$$\}$ with the structure $\Z/3\Z \times \Z/3\Z$, 

\smallskip\noindent
$m \in$\,$\{$$762$, $2355$, $3073$, $3469$, $4405$, $4798$\,$\ldots$$\}$ 
with the structure $\Z/9\Z \times \Z/9\Z$. 

\smallskip 
More precisely, we can state an analogous result of ``smooth complexity'':

\begin{theorem}\label{capitule2}
Assume that $p \geq 3$ does not split in $k$, that $k^\acyc/k$ is totally 
ramified, that $\CH_k \simeq \Z/p\Z$ and $\CH_{k_1^\acyc}^{} \simeq 
\Z/p\Z \times \Z/p\Z$. Then $\CH_k$ capitulates in $k_1^\acyc$. 
\end{theorem}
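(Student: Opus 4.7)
\medskip

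\noindent\textbf{Proof plan.} The plan is to mirror the strategy of Theorem \ref{capitule1}, replacing the trivial $G_1$-action (which held there because Chevalley--Herbrand produced the full group $\CH_{k_1^\acyc}$ as fixed points) by a nontrivial but maximally constrained Jordan-block action, and to observe that the algebraic norm still annihilates $\CH_{k_1^\acyc}$ as soon as $p \geq 3$. Put $G_1 = \Gal(k_1^\acyc/k) = \langle g \rangle$.

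\emph{Step 1 (Chevalley--Herbrand).} Since $p$ is non-split in $k$, there is a unique prime ${\mathfrak p} \mid p$ in $k$, and it is totally ramified in $k_1^\acyc/k$ by the hypothesis of total ramification of $k^\acyc/k$; hence $\prod_v e_v(k_1^\acyc/k) = p$. The unit group $E_k$ is $\{\pm 1\}$ (or of order prime to $p$ in the generic case), and since $[k_1^\acyc:k] = p$ is odd, every element of $E_k$ is a norm from $k_1^\acyc$: indeed $-1 = N_{k_1^\acyc/k}(-1)$, and for a root of unity $\zeta$ of order coprime to $p$ one has $\zeta = N_{k_1^\acyc/k}(\zeta^{p^{-1}\bmod\,{\rm ord}(\zeta)})$. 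So the Chevalley--Herbrand unit index is trivial, and
$$\bigl| \CH_{k_1^\acyc}^{G_1} \bigr| = \bigl|\CH_k\bigr| \cdot p/p = p.$$

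\emph{Step 2 (module structure).} By hypothesis $\CH_{k_1^\acyc} \simeq (\Z/p\Z)^2$ is an $\F_p$-vector space of dimension $2$, and its $G_1$-invariants form a subspace of $\F_p$-dimension $1$. Setting $N := g-1$, the operator $N$ is nilpotent on an $\F_p$-space (as $G_1$ is a $p$-group) with $\F_p$-rank $1$; by rank--nullity, $\mathrm{im}(N) \subseteq \ker(N)$, whence $N^2 = 0$. Equivalently, $\CH_{k_1^\acyc}$ is the indecomposable $\F_p[G_1]$-module $\F_p[x]/(x^2)$ with $g$ acting as $1+x$, i.e.\ the Jordan block $\bigl(\begin{smallmatrix} 1 & 1 \\ 0 & 1 \end{smallmatrix}\bigr)$ in a suitable basis.

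\emph{Step 3 (algebraic norm and conclusion).} In $\F_p[G_1] \simeq \F_p[x]/(x^p)$ one has $\BNu_{k_1^\acyc/k} = 1+g+\cdots+g^{p-1} = \bigl((1+x)^p - 1\bigr)/x = x^{p-1}$; since $N^2 = 0$ and $p \geq 3$, this acts as $N^{p-1} = 0$ on $\CH_{k_1^\acyc}$. Explicitly, writing $g^i = 1+iN$, one finds $\BNu_{k_1^\acyc/k} = p + \tfrac{p(p-1)}{2}\,N = 0$ in $\F_p$. Because $k^\acyc/k$ is totally ramified, $k_1^\acyc \cap H_k^\nr = k$, so class field theory gives surjectivity of $\BN_{k_1^\acyc/k}\colon \CH_{k_1^\acyc} \twoheadrightarrow \CH_k$; therefore
$$\BJ_{k_1^\acyc/k}(\CH_k) = \BJ_{k_1^\acyc/k} \circ \BN_{k_1^\acyc/k}\bigl(\CH_{k_1^\acyc}\bigr) = \BNu_{k_1^\acyc/k}\bigl(\CH_{k_1^\acyc}\bigr) = 0,$$
which is the capitulation of $\CH_k$ in $k_1^\acyc$. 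The only real subtlety is the Chevalley--Herbrand unit index in the exceptional case $k=\Q(\sqrt{-3})$ with $p=3$ (where $\mu_3 \subset E_k$ and total ramification of $k^\acyc/k$ does not occur in the usual sense); this case is already set aside in the paper, so the argument above covers the stated hypotheses. This result again fits the ``smooth complexity'' framework of \cite{Gra2024a}, now with $b(L) = 1$ coming from $\dim_{\F_p}\!\bigl(\CH_{k_1^\acyc}/\CH_{k_1^\acyc}^{G_1}\bigr) = 1$.
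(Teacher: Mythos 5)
Your proof is correct and follows essentially the same route as the paper: Chevalley--Herbrand gives $\order \CH_{k_1^\acyc}^{G_1}=p$, hence $(\sigma-1)^2$ annihilates the elementary group $\CH_{k_1^\acyc}\simeq(\Z/p\Z)^2$, the algebraic norm $\BNu_{k_1^\acyc/k}$ therefore vanishes, and surjectivity of the arithmetic norm (total ramification) forces $\BJ_{k_1^\acyc/k}(\CH_k)=1$. The only difference is presentational: where the paper runs the second step of the filtration and cites the identity $\BNu_{k_1^\acyc/k}=(\sigma_1-1)^2A+pB$ from \cite[Theorem 2.8]{Gra2024a}, you obtain $N^2=0$ by elementary linear algebra on a $2$-dimensional $\F_p$-space and compute the norm element $1+g+\cdots+g^{p-1}=x^{p-1}$ directly, which makes the argument self-contained.
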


\begin{proof}
We consider the first two elements of the filtration of $\CH_{k_1^\acyc}$ 
(cf. Definition \ref{filtre}). We have $\order \CH_{k_1^\acyc}^1 = 
\order \CH_k = p$ and $\order(\CH_{k_1^\acyc}^2/\CH_{k_1^\acyc}^1) =
\ds \frac{\order \CH_k}{\order \BN_{k_1^\acyc/k}(\CH_{k_1^\acyc}^1)} = p$
since $\BN_{k_1^\acyc/k}(\CH_{k_1^\acyc}^1) = (\CH_k)^p \cdot 
\langle \Ccl({\mathfrak p}) \rangle =1$; whence $\order \CH_{k_1^\acyc}^2 
= p \cdot \order \CH_{k_1^\acyc}^1= p^2 = \order \CH_{k_1^\acyc}$ 
by assumption, and $\CH_{k_1^\acyc}^2 \simeq \Z/p\Z \times \Z/p\Z$; but
$\BNu_{k_1^\acyc/k} = (\sigma_1-1)^2 \cdot A_2+p \cdot B_2,
\ \, A_2, B_2 \in \Z[\sigma_1-1, p]$ (apply \cite[Theorem 2.8]
{Gra2024a} for any $p$, $N=1$, $s=0$, $k=2 \in [1, p-1]$, $f(k) = 1$), 
which yields the relation $\BNu_{k_1^\acyc/k}(\CH_{k_1^\acyc}) = 
\BNu_{k_1^\acyc/k}(\CH_{k_1^\acyc}^2) = 1$. This capitulation
implies that $\Ccl_1({\mathfrak P}_1)$ is of order $p$.
\end{proof}

\subsection{Capitulation of \texorpdfstring{$\CH_k$}{Lg} in 
\texorpdfstring{$k^\acyc/k$}{Lg} when 
\texorpdfstring{$p$ splits in $k$}{Lg}}

\begin{remark}\label{independent}
Taking into account Proposition \ref{NCcap1}, for $e=0$, $n=1$, 
$\order \CH_k=p$, and the constraint $\order \CH_{k_1^\acyc}^\ram = 
\order \CH_k \times p$ for a capitulation in $k_1^\acyc$, we see that necessarily the
$p$-classes $\Ccl_1 \big({\mathfrak P}_1 \big)$ and $\Ccl_1 \big({\mathfrak P}'_1 \big)$ 
of $k_1^\acyc$ must be independent or of order $p^2$. For instance, 
for $p=3$, $m=302$, one finds:

\smallskip
\ft\begin{verbatim}
From Program I (Non Split case) m=302 Disc=1208 h_k=12 H_k=[12] 
Q^ac=x^3-93*x-458    H_kacyc=[12,3]    Val=3    a=1
Components of Cl(P1/P'1) and Cl(P1*P'1): [6,1] [8,2]
Algebraic norm of H_kacyc=[12,3]
Norm of the component 1 of H_kacyc: [0,0]
Norm of the component 2 of H_kacyc: [0,0]
\end{verbatim}\ns

\noindent
proving that $\CH_{k_1^\acyc}^\ram = \CH_{k_1^\acyc} =
\langle \Ccl_1({\mathfrak P}_1) \rangle \oplus 
\langle \Ccl_1({\mathfrak P}'_1) \rangle \simeq \Z/3\Z \times \Z/3\Z$.

For $m=503$, $\CH_k \simeq \Z/3\Z$, $\CH_{k_1^\acyc} \simeq 
\Z/9\Z \times \Z/3\Z$, $\Ccl_1({\mathfrak P}_1) = h_1^\acyc$,
$\Ccl_1({\mathfrak P}'_1) = h_1^{\acyc \,62}$, for $h_1$ of order $63$,
hence $\Ccl_1 \big({\mathfrak P}_1/{\mathfrak P}'_1 \big)$ of $3$-order $9$,
$\Ccl_1 \big({\mathfrak P}_1 {\mathfrak P}'_1 \big) = 1$. But
there is not capitulation. 
\end{remark}

\begin{theorem}\label{capitule1}
Assume that $p \geq 3$ splits in $k$, that $k^\acyc/k$ is totally ramified, 
that $\CH_k \simeq \Z/p\Z$ and $\CH_{k_1^\acyc}^{} \simeq 
\Z/p\Z \times \Z/p\Z$. Then $\CH_k$ capitulates in $k_1^\acyc$.
\end{theorem}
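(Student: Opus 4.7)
The plan is to combine the Chevalley--Herbrand genus formula with the surjectivity of the norm, exploiting the fact that $\CH_{k_1^\acyc}$ is killed by $p$. Set $G_1 = \Gal(k_1^\acyc/k)$, of order $p$, and let $\BJ = \BJ_{k_1^\acyc/k}$ and $\BN = \BN_{k_1^\acyc/k}$ denote the usual extension and norm maps on $p$-class groups.

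First I would compute the order of $\CH_{k_1^\acyc}^{G_1}$ via Chevalley--Herbrand. Since $p$ splits in $k$, in particular $k \ne \Q(\sqrt{-3})$, so the $p$-part of $E_k$ is trivial and the unit index in the formula disappears. Total ramification of $k^\acyc/k$ at the two primes $\mathfrak{p},\overline{\mathfrak{p}}\mid p$ contributes $\prod_v e_v = p^2$, hence
\[
|\CH_{k_1^\acyc}^{G_1}| \;=\; |\CH_k|\cdot\frac{p^2}{[k_1^\acyc:k]} \;=\; p\cdot p \;=\; p^2.
\]
Together with the hypothesis $|\CH_{k_1^\acyc}| = p^2$, this forces $\CH_{k_1^\acyc}^{G_1} = \CH_{k_1^\acyc}$; in other words, $G_1$ acts trivially on the whole class group.

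Next I would use the identity $\BJ\circ\BN = \sum_{\tau \in G_1}\tau$ on $\CH_{k_1^\acyc}$; triviality of the $G_1$-action reduces it to $p\cdot\mathrm{id}$, which vanishes because $\CH_{k_1^\acyc}\simeq(\Z/p\Z)^2$ has exponent $p$. On the other hand, by class field theory, $\CH_k/\BN(\CH_{k_1^\acyc}) \simeq \Gal((k_1^\acyc\cap H_k^\nr)/k)$, and total ramification of $k^\acyc/k$ gives $k_1^\acyc\cap H_k^\nr = k$, so $\BN$ is surjective. Chaining the two facts, $\BJ(\CH_k) = \BJ(\BN(\CH_{k_1^\acyc})) = 0$, which is precisely the desired capitulation.

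No serious obstacle arises. The only delicate point is triviality of the $p$-part of the unit index in Chevalley--Herbrand, but splitting of $p$ in $k$ excludes $k = \Q(\sqrt{-3})$, leaving $E_k = \{\pm 1\}$ of order prime to $p \ge 3$; no further computation seems necessary.
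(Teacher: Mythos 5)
Your proof is correct and follows essentially the same route as the paper: Chevalley--Herbrand gives $\order\CH_{k_1^\acyc}^{G_1}=p^2$, hence $G_1$ acts trivially, the algebraic norm $\BNu_{k_1^\acyc/k}=\BJ_{k_1^\acyc/k}\circ\BN_{k_1^\acyc/k}$ reduces to $p\cdot\mathrm{id}=0$ on the exponent-$p$ group, and surjectivity of $\BN_{k_1^\acyc/k}$ in the totally ramified case yields $\BJ_{k_1^\acyc/k}(\CH_k)=1$. No discrepancy worth noting.
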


\begin{proof}
The Chevalley--Herbrand formula, applied in $k_1^\acyc/k$, gives by 
assumption $\order \CH_{k_1^\acyc}^{G_1} = p^2$, whence $\CH_{k_1^\acyc} 
= \CH_{k_1^\acyc}^{G_1}$. So, $\BNu_{k_1^\acyc/k}(\CH_{k_1^\acyc}) 
= \CH_{k_1^\acyc}^p = 1$ proving the result. 
\end{proof} 

This is another illustration of the notion of ``smooth complexity''
(apply \cite[Theorem 2.8]{Gra2024a} with 
$L=k_1^\acyc$, $e(L)=1$, $s(L) = 0$, $b(L) \in [1, p-1]$ since 
$b(L)=1$ from $\CH_{k_1^\acyc} = \CH_{k_1^\acyc}^{G_1}$).
For $p = 3$, this holds for:

\smallskip\noindent
$m \in$\ft$\{$$302$, $602$, $617$, $713$, $863$, 
$1007$, $1046$, $1427$, $1454$, $1478$, $1547$, $1583$, 
$1619$, $1865$, $1895$, $1997$, $2141$, $2213$, $2273$, 
$2555$, $2627$, $2801$, $2822$, $2879$, $3359$, $3617$, 
$3653$, $3665$, $3899$, $3917$,\,$\ldots$$\}$.\ns

\smallskip
When $p$ splits in $k$, see \cite[Lemme 4]{Jau2024a} 
for the properties of these totally $p$-adic fields.

\subsection{Characterization of \texorpdfstring{$\lambda_p(K/k) 
= \mu_p(K/k) = 0$, for $p$}{Lg} non-split in \texorpdfstring{$k$}{Lg}}

Many sufficient conditions are given in the literature in the easier case 
of a unique $p$-place totally ramified in a $\Z_p$-extension $K/k$; 
most of them are recalled or proved in \cite[Theorems 2.2, 3.3, Propositions 3.4, 
3.6, 3.7]{KW2023} including that given by Theorem \ref{CHcyclic} using 
Chevalley--Herbrand formula, and some others, equivalent to stability 
theorems (cf. \cite[Theorem 3.1]{Gra2022} for a more complete bibliography
and generalizations).
 
\smallskip
A characterization, based on the principle of capitulation, does exist (Greenberg 
\cite[Theorem 1]{Gree1976} for totally real $k$, Gerth \cite[Theorem 1.3\,(c)]
{Ger1977} for any base field $k$), saying, still under a unique place totally 
ramified in the $\Z_p$-extension $K/k$, that $\lambda_p(K/k) = \mu_p(K/k) = 0$ 
if and only if $\CH_k$ capitulates in $K$. If, moreover, $k$ is an imaginary 
quadratic field, the proof of this result becomes elementary; we 
give the proof, by adapting some reasonings of Greenberg and Gerth. 
The case $K=k^\acyc$ may be illustrated by our programs. 
In fact, the proof is valid for any $K \ne k^\cyc$ since capitulation of 
$\CH_k \ne 1$ in $k^\cyc$ does not hold.

\begin{theorem}\label{lambdamu}
If $p \geq 3$ does not split in $k = \Q(\sqrt{-m})$ and is totally ramified in $K/k$,
a necessary and sufficient condition for $\lambda_p(K/k) = \mu_p(K/k) = 0$ is 
that $\CH_k$ capitulates in $K$.
\end{theorem}

\begin{proof}
We use some obvious simplified notations and give first general results.
For $n \geq 0$, set $G_n = \Gal(K_n/k) =: \langle \sigma_n \rangle$, where 
$\sigma_n = \sigma_{\vert _{K_n}}$ for a fixed topological generator $\sigma$ 
of $\Gal(K/k)$. Consider $m \geq n \geq 0$, and let $g_n^m = \Gal(K_m/K_n)$.
Chevalley--Herbrand formula leads to:

\smallskip
\quad (i) $\order\CH_n^{G_n} = \order \CH_0$ (with $\CH_0 := \CH_k$), 
$\order\CH_m^{g_n^m} = \order \CH_n$, and $\CH_n^{G_n} = \BJ_0^n(\CH_0) 
\cdot \CH_n^\ram = \BJ_0^n(\CH_0) \cdot \langle \Ccl_n({\mathfrak P}_n) \rangle$, 
where ${\mathfrak P}_n$ is the unique prime ideal dividing $p$ in $K_n$.

\smallskip
\quad (ii) The order of the sub-module $\CH_n^\ram$ is bounded by 
$\order \CH_0$ for all $n$.

\smallskip
{\bf (a)} Assume that $\lambda_p(K/k) = \mu_p(K/k) = 0$. 
Let $n_0^{} \geq 0$ be such that $\order \CH_n = p^\nu$ (constant) 
for all $n \geq n_0^{}$. Thus, $\order \CH_m^{g_n^m} = \order \CH_n 
= p^\nu$, whence $\CH_m^{g_n^m} = \CH_m$ of same order $p^\nu$; 
the relative arithmetic norms $\BN_n^m$ are isomorphisms and the 
relative transfer maps are such that: 
$$\BJ_n^m(\CH_n) = \BJ_n^m(\BN_n^m (\CH_m)) 
= \BNu_n^m (\CH_m) = (\CH_m)^{p^{m-n}}, $$ 
since $\CH_m = \CH_m^{g_n^m}$, yielding, for all $n \geq n_0^{}$, 
capitulation of $\CH_n$ in  $K_m$ for $m$ large enough, since 
$\order \CH_m$ is a constant, whence capitulation of $\CH_n$ in $K$ 
for all $n \geq 0$.

\smallskip
{\bf (b)} Assume that $\CH_k =: \CH_0$ capitulates in $K$.
Let's prove first that for all $n \geq 0$, $\CH_n$ capitulates in $K$.
Assume on the contrary that there exists $n \geq 1$ such that $\CH_n$ 
does not totally capitulate in $K$; since $(\sigma_n-1)^N \to 0$, 
$p$-adically, there exists $c_n \in \CH_n$ such that $c_n$ does not  
capitulate in $K$, but $c_n^{\sigma_n-1}$ capitulates in some $K_m$.

\smallskip
Let $c_m = \BJ_n^m(c_n)$; so $c_m^{\sigma_m-1} = \BJ_n^m(c_n^{\sigma_m-1}) =
\BJ_n^m(c_n^{\sigma_n-1}) = 1$ and $c_m \in \CH_m^{G_m}
= \BJ_0^m(\CH_0) \cdot \langle \Ccl_m({\mathfrak P}_m) \rangle$ 
from (i), and, by assumption, we may take $m$ such that $\BJ_0^m(\CH_0) = 1$;
so $c_m \in \langle \Ccl_m({\mathfrak P}_m) \rangle$.

In an extension $K_s/K_m$, we have $\BJ_m^s({\mathfrak P}_m)
= {\mathfrak P}_s^{p^{s-m}}$, and for $s$ large enough, ${\mathfrak P}_s^{p^{s-m}}$
is $p$-principal from (ii); hence $c_m$ capitulates in $K_s$, and, a fortiori, $c_n$
capitulates (contradiction).

\smallskip
So, all the $\CH_n$'s capitulate in $K$. Using the fact that ${\rm Cap}_n := 
\{c \in \CH_n,\ c \hbox{ capitulates in $K$}\}$ is bounded independently of $n$ 
(\cite[\S\,5, Theorem 10]{Iw1973}, cited and used in \cite[Proposition 2]{Gree1976}), 
the theorem follows since this set coincides with $\CH_n$. 

\smallskip
A more precise result may be found in Grandet--Jaulent \cite[Theorem,\,(i,\,ii), 
p.\,214]{GJ1985}, assuming the triviality of the $\mu$ invariant, but valid for 
any $\Z_p$-extension, and showing that for $n$ large enough, $\CH_n 
\simeq \Big(\plus_{i=1}^\lambda \Z/p^{n+\alpha_i}\Z\Big) \oplus 
\Big(\plus_{i = \lambda + 1}^\kappa \Z/p^{\alpha_i}\Z\Big)$,
$\alpha_1, \ldots, \alpha_\lambda \in \Z$, $\alpha_{\lambda+1}, \ldots, 
\alpha_\kappa \geq 0$, where $\bigoplus_{i = \lambda + 1}^\kappa \Z/p^{\alpha_i}\Z 
\simeq {\rm Cap}_n$, these $\alpha_i$'s being independent of $n$ (see \cite{Gil1985} 
and many other papers for the study of $\mu$ in this context).
\end{proof}

It follows that in the conditions of Theorems \ref{capitule2}, \ref{capitule1}, the
Iwasawa invariants $\lambda_p(k^\acyc/k)$, $\mu_p(k^\acyc/k)$ are zero. 
On the contrary, Theorem \ref{lambdamu} has some consequence for $k^\cyc$:

\begin{corollary} \label{cyclotomic}
Assume $p \geq 3$ non-split in $k$; if $\CH_k \ne 1$, then
$\lambda_p(k^\cyc/k) \geq 1$ and $\mu_p(k^\cyc/k) = 0$.
\end{corollary}

\begin{proof}
Since $k^\cyc = k \Q^\cyc$, $k^\cyc/k$ is totally ramified at a unique 
prime ideal above $p$; moreover each ${\mathfrak P}_n$ is $p$-principal 
in $k_n^\cyc$, as extension of a principal ideal of $\Q_n^\cyc$; thus, 
$\BJ_0^n$ is injective for all $n \geq 0$ implying the claim.
\end{proof}

A short excerpt giving $\order \CH_n$ for $n = 0, 1, 2$ can be analyzed regarding 
the important tables \cite[Table I]{DFKS1991} giving $\lambda_p(k^\cyc/k)$ from 
$p$-adic ${\rm L}$-functions computations:

\ft\begin{verbatim}
m   n=0  n=1  n=2 lambda     m   n=0  n=1  n=2 lambda      m   n=0  n=1  n=2 lambda  
31  3    9    27     2       199 9    27   81     2        214 3    27   2187   4
186 3    27   243    2       211 3    27   243    2        286 3    27   729    ?
\end{verbatim}\ns 

The following property confirms once again the significance of capitulation
that may be the object of numerical computations in $k^\acyc$, at least for $n=1$:

\begin{proposition}\label{nocap}
Assume $p \geq 3$ non-split in $k$ and totally ramified in $k^\acyc/k$; then:

\smallskip
(i) For all $n \geq 1$, $\CH_{k_n^\acyc}^{G_n} = \BJ_{k_n^\acyc/k}(\CH_k) \plus 
\CH_{k_n^\acyc}^\ram$. 

\smallskip
(ii) The prime ideal ${\mathfrak P}_n \mid p$ in $k_n^\acyc$, $n \geq 1$, 
is non $p$-principal, if and only if there is at least a partial capitulation of 
$\CH_k$ in $k_n^\acyc$.

\smallskip
(iii) If $\BJ_{k_n^\acyc/k}$ is injective for all $n \geq 0$ (never capitulation), 
the primes ${\mathfrak P}_n$ are $p$-principal in $k_n^\acyc$ for all $n \geq 0$; 
this yields relations of the form $p = \alpha_n^{p^n}\!\! \cdot \eta_n$, $\alpha_n 
\in k_n^\acyc$, $\eta_n \in E_{k_n^\acyc}$, $\eta_n \notin E_{k_n^\acyc}^p$.
\end{proposition}

\begin{proof}
(i) From Remark \ref{Pstructure}, $\BJ_{k_n^\acyc/k}(\CH_k)$ is a minus
component while $\CH_{k_n^\acyc}^\ram = \langle \Ccl_n({\mathfrak P}_n) 
\rangle$ is a plus component, whence $\CH_{k_n^\acyc}^{G_n} = 
\BJ_{k_n^\acyc/k}(\CH_k) \plus \,\langle \Ccl_n({\mathfrak P}_n \rangle$.
Points (ii), (iii) are immediate consequences of the above relation since 
$\order \CH_n^{G_n} = \order \CH_k$. 
\end{proof}

Property (iii) (if any) seems pathological, in a $p$-adic viewpoint, in this 
non-cyclotomic context; see Theorem \ref{degreecap} and the Remarks
\ref{cyclocap} that follow it.

\smallskip
The case $n=1$ is very accessible to computation and states that, on the contrary, if 
${\mathfrak P}_1$ is not $p$-principal, $\BJ_1$ can not be injective, that goes in the 
good direction for a total capitulation in $k^\acyc$. Let's give two examples for $p=3$:

\smallskip
(i) Partial capitulation of order $3$ of $\CH_k \simeq \Z/9\Z$:

\ft\begin{verbatim}
m=29959 Disc=29959 kronecker(-m,3)=-1
Components of P_1: [117,18,0]
Algebraic norm of H_kacyc=[351,27,3] H_k=[117]
Norm of the component 1 of H_kacyc: [9,0,2]
Norm of the component 2 of H_kacyc: [9,0,2]
Norm of the component 3 of H_kacyc: [0,0,0]
\end{verbatim}\ns

(ii) Non capitulation of $\CH_k$ in $k_1^\acyc$ deduced from the 
$3$-principality of ${\mathfrak P}_1$ (Program 3):

\ft\begin{verbatim}
m=11217 Disc=44868 kronecker(-m,3)=0
Components of P_1: [15,0,0]
Algebraic norm of H_kacyc=[30,6,3] H_k=[30,2]
Norm of the component 1 of H_kacyc: [0,1,1]
Norm of the component 2 of H_kacyc: [0,0,0]
Norm of the component 3 of H_kacyc: [0,0,0]
\end{verbatim}\ns

The programming, in the split or non-split cases, of the calculations of 
$\Ccl_1({\mathfrak P}_1/{\mathfrak P}'_1)$ and $\Ccl_1({\mathfrak P}_1{\mathfrak P}'_1)$, 
when $k^\acyc/k$ is totally ramified, appears at the beginning of the sub-programs 
``{\sc computation of the partial capitulations}'' (Programs I, III, IV).

\section{Conclusion} 
This large study of the phenomena of capitulation in $k^\acyc/k$
strengthens the fact that the non-cyclotomic $\Z_p$-extensions of an 
imaginary quadratic field behave like the cyclotomic $\Z_p$-extension
of a {\it totally real} number field, in the sense that Iwasawa's invariants
may have minimal canonical values \cite[Conjecture 5.8]{Gra2024b}
with $\mu_p(k^\acyc/k) = 0$, $\lambda_p(k^\cyc/k) = 0$ in the totally real 
case, then $\lambda_p(k^\acyc/k) \in \{0, 1\}$ in the imaginary case, 
according to the decomposition of $p$ in $k$, because of unbounded 
groups $\CH_{k_n^\acyc}^{G_n}$ of invariant classes in $k_n^\acyc/k$ as 
$n$ tends to $\infty$ when $p$ splits in $k$ (see Proposition \ref{NCcap1}
and Remark \ref{Pstructure}). Even if $k^\acyc/k$ is the unique 
$\Z_p$-extension which can contain a non-ramified sub-extension, we 
can say that $p^{\lambda_p(k^\acyc/k)}$ is nothing but the order of magnitude 
given by the Chevalley--Herbrand formula in $k_n^\acyc/k_e^\acyc$ as $n \to \infty$.

\smallskip
We have extensively checked the role of non-totally real units in $k_n^\acyc$ 
(Theorem \ref{degreecap} and Remark \ref{cyclocap}), and some computations 
are done in \cite{Gra2024b} where we produce precise justifications. 

\smallskip
The case $H_k^\nr \subset k^\acyc$ of Theorem \ref{CHcyclic}, then 
Theorems \ref{capitule2}, \ref{capitule1}, Proposition \ref{nocap} and 
the criterion of \cite[Theorem 1.2\,(i)]{Gra2024a} with total ramification, 
show that there is a priori no obstructions for systematic capitulations. 

\smallskip
The question arises of replacing the $p$-class groups $\CH_{k_n^\acyc}$
by the quotients by the subgroups $\CH_{k_n^\acyc}^{G_n}$,
which absorb an unbounded part in the split case; 
in other words, considering $\CH_{k_n^\acyc}/\CH_{k_n^\acyc}^{G_n}$, 
leads to $\big(\CH_{k_n^\acyc}/\CH_{k_n^\acyc}^{G_n} \big)^{G_n} 
= \CH_{k_n^\acyc}^2/\CH_{k_n^\acyc}^1$ in terms of the canonical 
filtration (Definition \ref{filtre}), and we have proved the following
properties (without any hypothesis on the ramification in $k^\acyc/k$):

\begin{theorem}(\cite[Theorem 5.1 \& Appendix by Jaulent]{Gra2024b}).
Let $k=\Q(\sqrt {-m})$ in which $p \geq 3$ totally splits into 
${\mathfrak p}\,{\mathfrak p}'$, and let $\eta_k$ be the generating 
${\mathfrak p}$-unit (whence $(\eta_k) = {\mathfrak p}^h$, 
$h$ being the order of $\Ccl({\mathfrak p}))$. Then:

\smallskip
(i) For $n$ large enough, $\CH_{k_n^\acyc}^2/\CH_{k_n^\acyc}^1$ is 
isomorphic to the logarithmic class group $\wt \CH_k$.

\smallskip
(ii) For $n$ large enough, $\order \wt \CH_k = \order\,\big[
 (\CH_{k_n^\acyc}^{S_n})^{G_n} \big]$, where $S_n = S_{k_n^\acyc}$
(expression given in \cite[Th\'eor\`eme III.1.9, p.\,177]{Jau1986},
\cite[Corollary 3.9 or Theorem 3.11]{Gra2017}).

\smallskip
(iii) $\wt \CH_k = 1$ (giving $\CH_{k_n^\acyc} = \CH_{k_n^\acyc}^1$, 
hence $\lambda_p(k^\acyc/k)=1$ and $\mu_p(k^\acyc/k)=0$) if and only if 
${\bf v}_{{\mathfrak p}'}(\eta_k^{p-1} - 1) = 1$ and $\CH_k = 
\langle \Ccl({\mathfrak p}) \rangle$.
\end{theorem}

The forthcoming paper of Jaulent \cite{Jau2025} will show that for $p$ split 
in $k$, $\lambda_p(k^\acyc/k)=1$ and $\mu_p(k^\acyc/k)=0$ if and only if $\wt \CH_k$ 
capitulates in $k^\acyc$. A proof, analogous to that of Theorem \ref{lambdamu},
does exist with $S$-class groups and $S$-units.

\smallskip
It would be necessary to initiate a systematic study of the modules 
$\widehat \CH_{k_n^\acyc} := \CH_{k_n^\acyc}/\CH_{k_n^\acyc}^1$, hoping 
that the corresponding Iwasawa invariants have the canonical values described 
above, and that proving this will use analogous techniques as for the totally real case 
yielding the criterion of capitulation of the logarithmic class group \cite{Jau2019}; 

\smallskip
Nevertheless, whatever the notion of classes and units, proving capitulation 
is not trivial and probably goes beyond classical Iwasawa's theory.

\subsection*{Acknowledgments}
I warmly thank the Referee for his/her work and the suggestions of
corrections to be made in the paper.

I would like to thank D. Kundu and L.C. Washington for email exchanges 
and the communication of unpublished data \cite{KW2024b} giving  
polynomials of the first layer of $k^\acyc/k$ for $p=3$. This allowed me to 
check and improve my programs, with all possibilities regarding 
the decomposition of $3$ and the structures of the $3$-class groups.

All my friendship to Jean-Fran\c cois for the discussions we had 
around the properties of anti-cyclotomy and for having analyzed and 
disproved results of an article, on Iwasawa invariants.

\begin{appendices}

\section{Appendix -- Numerical illustrations of the programs}\label{AppA}

Recall that $\order \CW_k^\bp=3$ if and only if $m \equiv 3 \pmod 9$. 
Then $k_1^\acyc/k$ is unramified (non disjunction of $H_k^\nr$ with 
$k^\acyc$) if and only if $\order \CH_k \geq 3 \cdot \order \CT_k^\bp=
3 \cdot \order \CT_k \cdot (\order \CW_k^\bp)^{-1}$. 
Otherwise (when $k^\acyc/k$ is totally ramified), $\order \CH_k = \order
\CT_k^\bp = \order \CT_k \cdot (\order \CW_k^\bp)^{-1}$.

\smallskip
In the unramified case, $\BN_{k_1^\acyc/k}(\CH_{k_1^\acyc}) =
\CH'_k \simeq \Gal(H_k^\nr/k^\acyc \cap H_k^\nr)$.

\medskip
In a first part, we give the detailed data computed by the programs, then
only the structures of the class groups in order to study the capitulation.

\subsection{Examples from Program I  (Non Split Case)
\texorpdfstring{\eqref{program1}}{Lg}}
This allows any square-free integer $m \not\equiv 3 \!\pmod 9$:

\medskip
(i) $m=586$, from Program I (Non Split case):

\ft\begin{verbatim}
m=586 Disc=2344 kronecker(-m,3)=-1 h_k=18 H_k=[18] T_k=[9] h_kstar=6 
H_kstar=[6] #H_k=9 #T_k^bp=9 #W_k^bp=1  k_1^ac/k is Ramified
J=1 q=5 Qq irreducible
J=2 q=5 Qq irreducible
J=3 q=5 Qq irreducible
SOLUTION:J=4 w=Mod(24282790*x+1018152659,x^2-1758) 
Q^ac=x^3-99*x-630    H_kacyc=[90,15]    Val=4    a=1
W_k^bp=1 is not direct factor of T_k
Components of P_1: [30,5]
Algebraic norm of H_kacyc=[90,15] H_k=[18] 
Norm of the component 1 of H_kacyc: [3,0]
Norm of the component 2 of H_kacyc: [0,0]
PARTIAL CAPITULATION OF H_k
\end{verbatim}\ns

\medskip
(ii) $m=262$, from Program I (Non Split case):

\ft\begin{verbatim}
m=262 Disc=1048 kronecker(-m,3)=-1 h_k=6 H_k=[6] T_k=[3] h_kstar=6 
H_kstar=[6] #H_k=3 #T_k^bp=3 #W_k^bp=1  k_1^ac/k is Ramified
J=1 q=37 Qq irreducible
J=2 q=37 Qq irreducible
J=3 q=37 Qq irreducible
SOLUTION:J=4 w=Mod(1043744*x-29262089,x^2-786) 
Q^ac=x^3+42*x-40    H_kacyc=[54,9]    Val=3    a=3
W_k^bp=1 is not direct factor of T_k
Components of P_1: [0,0]
Algebraic norm of H_kacyc=[54,9] H_k=[6] 
Norm of the component 1 of H_kacyc: [18,0]
Norm of the component 2 of H_kacyc: [0,0]
NO CAPITULATION IN H_k
\end{verbatim}\ns

\medskip
(iii) $m=14935391$, from Program I (Non Split case).
Using a field $k^*$ with $\CH_{k^*}$ of $3$-rank $3$ (see, 
e.g., \cite[Appendice 2]{Diaz1974}), we obtain, among $40$ cubic 
fields to be tested, the following data in the unramified case 
(a huge precision is required):

\ft\begin{verbatim}
m=14935391 Disc=14935391 kronecker(-m,3)=1 h_k=3645 H_k=[405,3,3] T_k=[9,3,3]  
H_kstar=[3,3,3] #T_k^bp=81 #W_k^bp=1  k_1^ac/k is Unramified 
SOLUTION:J=11 w=Mod(3327909141/2*x-22276162963303/2,x^2-44806173) 
Q^ac=x^3-x^2+120*x-587    H_kacyc=[405,9,3,3,3]    Val=4    a=1
W_k^bp is not direct factor of T_k
Algebraic norm of H_kacyc=[405,9,3,3,3] H_k=[405,3,3] 
Norm of the component 1 of H_kacyc: [30,3,0,0,0]
Norm of the component 2 of H_kacyc: [27,3,0,0,0]
Norm of the component 3 of H_kacyc: [0,0,0,0,0]
Norm of the component 4 of H_kacyc: [54,6,0,0,0]
Norm of the component 5 of H_kacyc: [27,3,0,0,0]
PARTIAL CAPITULATION OF H'_k
\end{verbatim}\ns

\subsection{Examples from Program II (Normal Split case)
\texorpdfstring{\eqref{program2}}{Lg}}

Recall that $m \equiv 3\! \pmod 9$, $\CH_{k^*} \ne 1$ and
$\Ccl ({\mathfrak p}^*)\notin \CH_{k^*}^3$ (equivalent to  
$\rk(\CT_k) = \rk(\CH_{k^*})$). From Theorem \ref{disjunction}, 
$k_1^\acyc/k$ is always unramified and $\CW_k^\bp$ 
is direct factor of $\CT_k$ if and only if $\CR_{k^*}$ is not:

\smallskip
(i) $m=3513$, from Program II (Normal Split case):

\ft\begin{verbatim}
m=3513 Disc=14052 kronecker(-m,3)=0 h_k=36 H_k=[18,2] T_k=[3]  
H_kstar=[3] #T_k^bp=1 #W_k^bp=3  k_1^ac/k is Unramified 
SOLUTION:J=1 w=Mod(-2606*x+89177,x^2-1171) 
Q^ac=x^3-15*x-72    H_kacyc=[6,2]    Val=2    a=1
W_k^bp is direct factor of T_k
Algebraic norm of H_kacyc=[6,2] H_k=[18,2] 
Norm of the component 1 of H_kacyc: [0,0]
Norm of the component 2 of H_kacyc: [0,0]
TOTAL CAPITULATION OF H'_k
H_kstar=[3] T_kstar=[3] #R_kstar=1
R_kstar is not direct factor of T_kstar
\end{verbatim}\ns

\smallskip
(ii) $m=1400187$, from Program II (Normal Split case):

\ft\begin{verbatim}
m=1400187 Disc=1400187 kronecker(-m,3)=0 h_k=312 H_k=[312] T_k=[3]  
H_kstar=[9] #T_k^bp=1 #W_k^bp=3  k_1^ac/k is Unramified 
SOLUTION:J=2 w=Mod(67882475701274136939792147679778746902443577727038324439813*x
    -46375656013230959481690169204517935501631098135432942574419920,x^2-466729) 
Q^ac=x^3-x^2-107*x-2058    H_kacyc=[104]    Val=2    a=1
W_k^bp is direct factor of T_k
Algebraic norm of H_kacyc=[104] H_k=[312] 
Norm of the component 1 of H_kacyc: [0]
TOTAL CAPITULATION OF H'_k
H_kstar=[9] T_kstar=[27] #R_kstar=3
R_kstar is not direct factor of T_kstar
\end{verbatim}\ns

\smallskip
(iii) $m=1400790$, from Program II (Normal Split case):

\ft\begin{verbatim}
m=1400790 Disc=5603160 kronecker(-m,3)=0 h_k=1152 H_k=[144,2,2,2] T_k=[3]  
H_kstar=[6,2,2] #T_k^bp=1 #W_k^bp=3  k_1^ac/k is Unramified 
SOLUTION:J=4 w=Mod(355922351834329992826003357*x+243209845276745662417851320463,x^2-466930) 
Q^ac=x^3-357*x-2934    H_kacyc=[48,2,2,2]    Val=2    a=1
W_k^bp is direct factor of T_k
Algebraic norm of H_kacyc=[48,2,2,2] H_k=[144,2,2,2] 
Norm of the component 1 of H_kacyc: [0,0,0,0]
Norm of the component 2 of H_kacyc: [0,0,0,0]
Norm of the component 3 of H_kacyc: [0,0,0,0]
Norm of the component 4 of H_kacyc: [0,0,0,0]
TOTAL CAPITULATION OF H'_k
H_kstar=[6,2,2] T_kstar=[3] #R_kstar=1
R_kstar is not direct factor of T_kstar
\end{verbatim}\ns

\subsection{Examples from Program III (Special Split case)
\texorpdfstring{\eqref{program3}}{Lg}} 
Here $m \equiv 3 \pmod 9$, with $\CH_{k^*} \ne 1$ 
and $\Ccl ({\mathfrak p}^*) \in \CH_{k^*}^3$ (equivalent to  
$\rk(\CT_k) = \rk(\CH_{k^*})+1$):

\smallskip
(i) $m=8139$, from Program III (Special Split case):

\ft\begin{verbatim}
m=8139 Disc=8139 kronecker(-m,3)=0 h_k=36 H_k=[36] T_k=[9,3]  
H_kstar=[3] #T_k^bp=9 #W_k^bp=3  k_1^ac/k is Ramified 
SOLUTION:J=10 w=Mod(177*x+9390,x^2-2713) 
Q^ac=x^3+108*x-651    H_kacyc=[36,6,2]    Val=4    a=1
W_k^bp is direct factor of T_k 
Components of P_1: [18,2,0]
Algebraic norm of H_kacyc=[36,6,2] H_k=[36] 
Norm of the component 1 of H_kacyc: [3,0,0]
Norm of the component 2 of H_kacyc: [0,0,0]
Norm of the component 3 of H_kacyc: [0,0,0]
PARTIAL CAPITULATION OF H_k
H_kstar=[3] T_kstar=[3] #R_kstar=1
R_kstar is not direct factor of T_kstar
\end{verbatim}\ns

\smallskip
(ii) $m=8913$, from Program III (Special Split case):

\ft\begin{verbatim}
m=8913 Disc=35652 kronecker(-m,3)=0 h_k=36 H_k=[18,2] T_k=[3,3]  
H_kstar=[3] #T_k^bp=3 #W_k^bp=3  k_1^ac/k is Unramified 
SOLUTION:J=3 w=Mod(-16536757906485*x-901367083753012,x^2-2971) 
Q^ac=x^3-x^2+8*x-38    H_kacyc=[6,2,2,2]    Val=3    a=1
W_k^bp is direct factor of T_k
Algebraic norm of H_kacyc=[6,2,2,2] H_k=[18,2] 
Norm of the component 1 of H_kacyc: [0,0,0,0]
Norm of the component 2 of H_kacyc: [0,0,0,0]
Norm of the component 3 of H_kacyc: [0,0,0,0]
Norm of the component 4 of H_kacyc: [0,0,0,0]
TOTAL CAPITULATION OF H'_k
H_kstar=[3] T_kstar=[3] #R_kstar=1
R_kstar is not direct factor of T_kstar
\end{verbatim}\ns

\smallskip
(iii) $m=46983$, from Program III (Special Split case):

\ft\begin{verbatim}
m=46983 Disc=46983 kronecker(-m,3)=0 h_k=168 H_k=[168] T_k=[3,3]  
H_kstar=[3] #T_k^bp=3 #W_k^bp=3  k_1^ac/k is Ramified 
SOLUTION:J=9 w=Mod(429566157772765447440605727/2*x-53757591659366243794048388487/2,
    x^2-15661) 
Q^ac=x^3-1161*x-15231    H_kacyc=[168,3,3]    Val=3    a=3
W_k^bp is direct factor of T_k 
Components of P_1: [84,0,0]
Algebraic norm of H_kacyc=[168,3,3] H_k=[168] 
Norm of the component 1 of H_kacyc: [0,0,0]
Norm of the component 2 of H_kacyc: [1,0,1]
Norm of the component 3 of H_kacyc: [0,0,0]
NO CAPITULATION IN H_k
H_kstar=[3] T_kstar=[3] #R_kstar=1
R_kstar is not direct factor of T_kstar
\end{verbatim}\ns

\smallskip
(iv) $m=245307$, from Program III (Special Split case): 

\ft\begin{verbatim}
m=245307 Disc=245307 kronecker(-m,3)=0 h_k=108 H_k=[108] T_k=[27,3]
H_kstar=[9] #T_k^bp=27 #W_k^bp=3  k_1^ac/k is Ramified
SOLUTION:J=11 w=Mod(-67602622739952754931055646822194594178831160058*x+
19331158807359840568708838261222200689708647399667,x^2-81769) 
Q^ac=x^3-1035*x-14538    H_kacyc=[108,3]    Val=5    a=1
W_k^bp is direct factor of T_k
Components of P_1: [90,2]
Algebraic norm of H_kacyc=[108,3] H_k=[108]
Norm of the component 1 of H_kacyc: [3,0]
Norm of the component 2 of H_kacyc: [0,0]
PARTIAL CAPITULATION OF H_k
H_kstar=[9] T_kstar=[9] #R_kstar=1
R_kstar is not direct factor of T_kstar
\end{verbatim}\ns

In the part ${\sf COMPUTATION\ OF\ w_p^*}$ of Program III,
the intermediate data are:

\ft\begin{verbatim}
Z=[[7,6;0,1]] Y=[0] X=[3,0;0,1] c=[[0]~,[788309253,5494348]]
Gamma =Mod(2747174*x+785562079, x^2-81769)
wstar=Mod(-8241522*x-2356686237,x^2-81769)
\end{verbatim}\ns

The radical solution, giving $k_1^\acyc$, is 
$w=w_{{\mathfrak p}^*}^* \times \varepsilon^{* 2}$ and ${\mathfrak p}^*$ 
is $3$-principal since we compute ${\sf Y=bnfisprincipal(kstar,Sideal)[1]=[0]}$.

\subsection{Examples from Program IV (Trivial case)
\texorpdfstring{\eqref{program4}}{Lg}}

This is the case $\CH_{k^*} = 1$ (whatever the decomposition of 
$3$ in $k^*$). Program IV gives many zero matrices.

\smallskip
We note that for the smallest values of $m$, for which $\CH_k= \CT_k=1$
(e.g., $m \in \{$$1$, $2$, $3$, $5$, $6$, $7$, $10$, $11$, $13$, $14$, $15$, 
$17$, $19$$\}$), the defining 
polynomials of $k_1^\acyc$ coincide with those of the table of \cite[Section IV, 
$p=3$]{Br2007} (the two cases $m=13$ and $m=15$ coincide after applying the
instruction ${\sf polredbest}$); all other numerical examples give same results.

\smallskip
(i) $m=87$, from Program IV (Trivial case):

\ft\begin{verbatim}
m=87 Disc=87 kronecker(-m,3)=0 h_k=6 H_k=[6] T_k=[]  
#T_k^bp=1 #W_k^bp=1  k_1^ac/k is Unramified 
SOLUTION:J=1 w=Mod(1/2*x-5/2,x^2-29) 
Q^ac=x^3-x^2-2*x+3    H_kacyc=[2]    Val=1    a=1
W_k^bp is not direct factor of T_k
Algebraic norm of H_kacyc=[2] H_k=[6] 
Norm of the component 1 of H_kacyc: [0]
TOTAL CAPITULATION OF H_k
\end{verbatim}\ns

\smallskip
(ii) $m=714$, from Program IV (Trivial case):

\ft\begin{verbatim}
m=714 Disc=2856 kronecker(-m,3)=0 h_k=24 H_k=[6,2,2] T_k=[9]  
#T_k^bp=3 #W_k^bp=3  k_1^ac/k is Ramified 
SOLUTION:J=2 w=Mod(18*x+279,x^2-238) 
Q^ac=x^3-27*x-558    H_kacyc=[18,18,2]    Val=3    a=1
W_k^bp is not direct factor of T_k
Components of P_1: [12,9,0]
Algebraic norm of H_kacyc=[18,18,2] H_k=[6,2,2] 
Norm of the component 1 of H_kacyc: [0,0,0]
Norm of the component 2 of H_kacyc: [0,0,0]
Norm of the component 3 of H_kacyc: [0,0,0]
TOTAL CAPITULATION OF H_k
\end{verbatim}\ns

\smallskip
(iii) $m=2181$, from Program IV (Trivial case):

\ft\begin{verbatim}
m=2181 Disc=8724 kronecker(-m,3)=0 h_k=36 H_k=[18,2] T_k=[27]  
#T_k^bp=9 #W_k^bp=3  k_1^ac/k is Ramified 
SOLUTION:J=2 w=Mod(-243*x+5346,x^2-727) 
Q^ac=x^3+81*x-396    H_kacyc=[18,6,2,2]    Val=4    a=1
W_k^bp is not direct factor of T_k
Components of P_1: [12,1,0,1]
Algebraic norm of H_kacyc=[18,6,2,2] H_k=[18,2] 
Norm of the component 1 of H_kacyc: [3,0,0,0]
Norm of the component 2 of H_kacyc: [0,0,0,0]
Norm of the component 3 of H_kacyc: [0,0,0,0]
Norm of the component 4 of H_kacyc: [0,0,0,0]
PARTIAL CAPITULATION OF H_k
\end{verbatim}\ns

\subsection{Other examples regarding Capitulation}

From Theorem \ref{degreecap}, a complete capitulation of $\CH_k$ 
in $k^\acyc$ cannot take place before the layer $k_{\hbox{\tiny{\sc exp}}(k)}^\acyc$.

\medskip
{\bf a)} From Program I (Non Split case).
One may use for instance Program I with its given list.
We comment some examples; see the Algorithm \ref{abcd} for the 
details.
 
\medskip
(i) $m=28477$ (Unramified):

\ft\begin{verbatim}
Q^ac=x^3-47*x-140
Algebraic norm of H_kacyc=[36,12,6] H_k=[18,3] 
Norm of the component 1 of H_kacyc: [3,0,0]
Norm of the component 2 of H_kacyc: [3,0,0]
Norm of the component 3 of H_kacyc: [0,0,0]
PARTIAL CAPITULATION OF H'_k
\end{verbatim}\ns

\medskip
(ii) $m=32573$ (Ramified):

\ft\begin{verbatim}
Q^ac=x^3+402*x-5428
Components of Cl(P1/P'1) and Cl(P1*P'1): [88,2,1,1] [0,2,2,2]
Algebraic norm of H_kacyc=[126,3,3,3] H_k=[42,3] 
Norm of the component 1 of H_kacyc: [3,0,0,0]
Norm of the component 2 of H_kacyc: [0,0,0,0]
Norm of the component 3 of H_kacyc: [0,0,0,0]
Norm of the component 4 of H_kacyc: [0,0,0,0]
PARTIAL CAPITULATION OF H_k
\end{verbatim}\ns

\medskip
(iii) $m=58213$ (Ramified):

\ft\begin{verbatim}
Q^ac=x^3-1110*x-14332
Components of P_1: [0,0,0,0,0]
Algebraic norm of H_kacyc=[180,30,3,3,3] H_k=[12,6] 
Norm of the component 1 of H_kacyc: [3,0,1,2,2]
Norm of the component 2 of H_kacyc: [6,0,1,2,2]
Norm of the component 3 of H_kacyc: [0,0,0,0,0]
Norm of the component 4 of H_kacyc: [0,0,0,0,0]
Norm of the component 5 of H_kacyc: [0,0,0,0,0]
NO CAPITULATION IN H_k
\end{verbatim}\ns

\medskip
(iv) $m=111749$ (Ramified):

\ft\begin{verbatim}
Q^ac=x^3-687*x-23068
Components of Cl(P1/P'1) and Cl(P1*P'1): [134, 2, 6, 1] [0,12,0,2]
Algebraic norm of H_kacyc=[216,18,9,3] H_k=[72,6] 
Norm of the component 1 of H_kacyc: [6,6,0,0]
Norm of the component 2 of H_kacyc: [15,6,0,0]
Norm of the component 3 of H_kacyc: [15,6,0,0]
Norm of the component 4 of H_kacyc: [18,0,0,0]
PARTIAL CAPITULATION OF H_k
\end{verbatim}\ns

Indeed, the image of $\CH_{k^\acyc}$ by the algebraic norm is
the cyclic group of order $9$ generated by $h_1^3 \,h_2^3$.

\medskip
(v) $m=78730$ (Ramified): 

\ft\begin{verbatim}
Q^ac=x^3-2658*x-54472
Components of P_1: [36,2,0,1]
Algebraic norm of H_kacyc=[54,6,3,3] H_k=[18,6] 
Norm of the component 1 of H_kacyc: [3,0,0,0]
Norm of the component 2 of H_kacyc: [18,0,0,0]
Norm of the component 3 of H_kacyc: [9,0,0,0]
Norm of the component 4 of H_kacyc: [18,0,0,0]
PARTIAL CAPITULATION OF H_k
\end{verbatim}\ns

\medskip
(vi) $m=113213$ (Ramified): 

\ft\begin{verbatim}
Q^ac=x^3-5142*x-141940
Components of Cl(P1/P'1) and Cl(P1*P'1): [114,2,4,0,0,0] [0,2,0,0,0,1]
Algebraic norm of H_kacyc=[198,6,6,3,3,3]  H_k=[66,3] 
Norm of the component 1 of H_kacyc: [0,1,2,0,0,0]
Norm of the component 2 of H_kacyc: [0,0,0,0,0,0]
Norm of the component 3 of H_kacyc: [0,0,0,0,0,0]
Norm of the component 4 of H_kacyc: [0,0,0,0,0,0]
Norm of the component 5 of H_kacyc: [3,0,0,0,0,0]
Norm of the component 6 of H_kacyc: [0,0,0,0,0,0]
NO CAPITULATION IN H_k
\end{verbatim}\ns

\medskip
{\bf b)} From Program II (Normal Split case).

\smallskip
(i) $m=128451$ (Unramified):

\ft\begin{verbatim}
Q^ac=x^3-x^2+76*x+84
Algebraic norm of H_kacyc=[36,6] H_k=[12,6] 
Norm of the component 1 of H_kacyc: [0,0]
Norm of the component 2 of H_kacyc: [0,0]
TOTAL CAPITULATION OF H'_k
\end{verbatim}\ns

Since $\CH_{k_1^\acyc}^{G_1} = \BJ_{k_1^\acyc/k}
(\CH_k) \oplus \CH_{k_1^\acyc}^\ram$, it follows that
$\BJ_{k_1^\acyc/k}(\CH_k) \simeq \Z/3\Z$.

\smallskip
(ii) $m = 129189$ (Unramified):

\ft\begin{verbatim}
Q^ac=x^3-x^2+144*x+150
Algebraic norm of H_kacyc=[42,6,6,2] H_k=[42,6] 
Norm of the component 1 of H_kacyc: [0,1,1,0]
Norm of the component 2 of H_kacyc: [0,0,0,0]
Norm of the component 3 of H_kacyc: [0,0,0,0]
Norm of the component 4 of H_kacyc: [0,0,0,0]
NO CAPITULATION IN H'_k
\end{verbatim}\ns

\medskip
{\bf c)} From Program III (Special Split case).

\smallskip
(i) $m=11217$ (Ramified):

\ft\begin{verbatim}
Q^ac=x^3-162*x-2340
Components of P_1: [15,0,0]
Algebraic norm of H_kacyc=[30,6,3] H_k=[30,2] 
Norm of the component 1 of H_kacyc: [0,1,1]
Norm of the component 2 of H_kacyc: [0,0,0]
Norm of the component 3 of H_kacyc: [0,0,0]
NO CAPITULATION IN H_k
\end{verbatim}\ns

\smallskip
(ii) $m=17814$ (Ramified):

\ft\begin{verbatim}
Q^ac=x^3+27*x-1386
Components of P_1: [90,25,1,0]
Algebraic norm of H_kacyc=[270,30,2,2] H_k=[54,2] 
Norm of the component 1 of H_kacyc: [3,0,0,0]
Norm of the component 2 of H_kacyc: [0,0,0,0]
Norm of the component 3 of H_kacyc: [0,0,0,0]
Norm of the component 4 of H_kacyc: [0,0,0,0]
PARTIAL CAPITULATION OF H_k
\end{verbatim}\ns

\medskip
{\bf d)} From Program IV (Trivial case).

There are many capitulations in the Trivial case since
the class groups have small orders from the assumption $\CH_{k^*}=1$:

\smallskip
(i)  $m=201$ (Ramified):

\ft\begin{verbatim}
Q^ac=x^3+54*x-252    H_kacyc=[6,6]    Val=3    a=1
W_k^bp is not direct factor of T_k
Components of P_1: [2,3]
Algebraic norm of H_kacyc=[6,6] H_k=[6,2] 
Norm of the component 1 of H_kacyc: [0,0]
Norm of the component 2 of H_kacyc: [0,0]
TOTAL CAPITULATION OF H_k
\end{verbatim}\ns

\smallskip
(ii) $m=3882$ (Ramified):

\ft\begin{verbatim}
Q^ac=x^3-81*x-354    H_kacyc=[12,6,3]    Val=3    a=3
W_k^bp is not direct factor of T_k
Components of P_1: [6,0,0]
Algebraic norm of H_kacyc=[12,6,3] H_k=[12,2] 
Norm of the component 1 of H_kacyc: [0,1,1]
Norm of the component 2 of H_kacyc: [0,0,0]
Norm of the component 3 of H_kacyc: [0,0,0]
NO CAPITULATION IN H_k
\end{verbatim}\ns

Including, for checking, the values of $m$, given by Theorems 
\ref{capitule1} and \ref{capitule2}, always give the zero matrices 
as expected.

\section{Appendix -- An example of unit 
\texorpdfstring{$\varepsilon_n^\capitul$}{Lg}}\label{capunit}
We intend to illustrate Theorem \ref{degreecap} with the 
field $k=\Q(\sqrt{-298})$, $K=k^\acyc$, and $n=1$. Due to
the {\sc pari/gp} version considered, the reader may obtain other
numerical values, other systems of classes, units, etc., but same 
results. From Program I one obtains the data:

\smallskip
\ft\begin{verbatim}
m=298 Disc=1192 kronecker(-m,3)=-1  
H_kstar=[6] k_1^ac/k is Ramified 
SOLUTION:J=1 w=Mod(670*x-20153,x^2-894) 
Q^ac=x^3-99*x-522    H_kacyc=[6,6,2]    Val=3    a=1
Components of P_1: [2,0,0]
Algebraic norm of H_kacyc=[6,6,2] H_k=[6] T_k=[3] #T_k^bp=3
Norm of the component 1 of H_kacyc: [0,0,0]
Norm of the component 2 of H_kacyc: [0,0,0]
Norm of the component 3 of H_kacyc: [0,0,0]
H_kstar=[6] T_kstar=[3] #W_kstar^bp=1 #R_kstar=1
TOTAL CAPITULATION OF H_k
\end{verbatim}\ns

\smallskip
The field $k_1^\acyc$ may be defined by the polynomial $R^\acyc$
(${\sf R^{ac}=x^6-66*x^4+1089*x^2+4768}$) from {\sf polredbest}
(an example where generators of $\BH_k$ may vary for the same field).
The class group of $k$, of order $6$, is generated by $\Ccl({\mathfrak l})$, 
${\mathfrak l} \mid 13$. The two ideals ${\mathfrak l}_1, {\mathfrak l}_2
\mid 13$ of $k$ are inert in $k_1^\acyc$. Taking the square of the ideals 
${\mathfrak L}_1, {\mathfrak L}_2 \mid 13$ in $k_1^\acyc$, and testing their 
principality gives:

\ft\begin{verbatim}
idealfactor(kacyc,13)=
L1=[13,[-4,39,0,0,0,4]~,1,3,[4,0,0,0,0,-1192;0,-32,-28,-120,0,0;
    0,28,40,0,120,0;0,44,0,40,-28,0;0,0,-44,28,-32,0;4,0,0,0,0,4]]
L2=[13,[4,39,0,0,0,4]~,1,3,[-4,0,0,0,0,-1192;0,-40,-28,-120,0,0;
    0,28,32,0,120,0;0,44,0,32,-28,0;0,0,-44,28,-40,0;4,0,0,0,0,-4]]

L1^2=idealpow(kacyc,L1,2);
bnfisprincipal(kacyc,L1^2)=[[0,0,0]~,[-11515,998,3398,1060,-2318,-848]~]

L2^2=idealpow(kacyc,L2,2)
bnfisprincipal(kacyc,L2^2)=[[0,0,0]~,[-11515,-5456,1080,3378,2318,848]~]
\end{verbatim}\ns

\smallskip
Thus, the squares of ${\mathfrak L}_1$ and ${\mathfrak L}_2$ are principal 
in $k_1^\acyc$; then, let $\{z_1,z_2,z_3,z_4,z_5,z_6\}$ be an integral basis 
of $k_1^\acyc$ given by the {\sc pari/gp} instruction ${\sf kacyc.zk}$:

\ft\begin{verbatim}
[1,x,1/116*x^4-55/116*x^2+1/2*x+121/29,1/58*x^4-13/29*x^2+1/2*x-77/29,
1/232*x^5-1/232*x^4-55/232*x^3-3/232*x^2+445/116*x+99/29,1/4*x^3-33/4*x].
\end{verbatim}\ns

\ft\begin{verbatim}
z1=Mod(1,R^ac);z2=Mod(x,R^ac);z3=Mod(1/116*x^4-55/116*x^2+1/2*x+121/29,R^ac);
z4=Mod(1/58*x^4-13/29*x^2+1/2*x-77/29,R^ac);
z5=Mod(1/232*x^5-1/232*x^4-55/232*x^3-3/232*x^2+445/116*x+99/29,R^ac);
z6=Mod(1/4*x^3-33/4*x,R^ac);
\end{verbatim}\ns

\noindent
Then, ${\mathfrak L}_1^2 = (\alpha_1)$
and ${\mathfrak L}_2^2 =: (\alpha_2)$ where:
\begin{equation*}
\left\{\begin{aligned}
\alpha_1 & = -11515\,z_1+ 998\,z_2+ 3398\,z_3+ 1060\,z_4 - 2318\,z_5 -848\,z_6, \\
 & = -\ffrac{1159}{116}\,x^5+\ffrac{6677}{116}\,x^4+
\ffrac{39153}{116}\,x^3-\ffrac{238533}{116}\,x^2 
+ \ffrac{77179}{58}\,x-\ffrac{233879}{29}, \\
\alpha_2 & = -11515\,z_1 -5456\,z_2+ 1080\,z_3+ 3378\,z_4+ 2318\,z_5+ 848\,z_6. \\
& =\ffrac{1159}{116}\,x^5+\ffrac{6677}{116}\,x^4 -
\ffrac{39153}{116}\,x^3-\ffrac{238533}{116}\,x^2 
- \ffrac{77179}{58}\,x-\ffrac{233879}{29}.
\end{aligned}\right.
\end{equation*}

In particular, $\alpha_1$ is an {\it integer} whose norm is, quite spectacularly, 
given by $\BN_{k/\Q}(\alpha_1)=4826809=13^6$ ($\alpha_2$ is the
conjugate of $\alpha_1$).

\smallskip
In $k$, the class of ${\mathfrak a} := {\mathfrak l}_1^2$ is a generator of 
order $3$ of $\CH_k$ and becomes principal in $k_1^\acyc$; we verify that
${\mathfrak a}^3 = (1899 \pm 64\,\sqrt{-298}) =: (\alpha_0^{})$ (of norm $13^6$).
So that, $\varepsilon_1^\capitul \in k_1^\acyc$ is given by:
\begin{equation}\label{E}
\left\{\begin{aligned}
\varepsilon_1^\capitul =
\frac{\alpha_0^{}}{\alpha_1^3} & = 
\frac{1899 \pm 64\,\sqrt{-298}}
{\big (-11515\,z_1+ 998\,z_2+ 3398\,z_3+ 1060\,z_4 -2318\,z_5 -848\,z_6 \big)^3} \\
& = \frac{1899 \pm 64\,\sqrt{-298}}{\big (-\frac{1159}{116}\,x^5+\frac{6677}{116}\,x^4+
\frac{39153}{116}\,x^3-\frac{238533}{116}\,x^2 
+ \frac{77179}{58}\,x-\frac{233879}{29} \big)^3};
\end{aligned}\right.
\end{equation}

\noindent
indeed, for a suitable sign in $1899 \pm 64\,\sqrt{-298}$, the ideals generated
by the numerator and the denominator are equal to ${\mathfrak L}_1^6$.

\smallskip
The fundamental units $\varepsilon, \varepsilon'$, given by the instruction 
${\sf kacyc.fu}$ are:
\ft\begin{verbatim}
E=Mod(225/116*x^5-343/116*x^4-12375/116*x^3+5815/116*x^2+
84291/58*x+78267/29,R^ac),
Es=Mod(225/116*x^5+343/116*x^4-12375/116*x^3-5815/116*x^2+
84291/58*x-78267/29,R^ac).
\end{verbatim}\ns

Using ${\sf nfgaloisapply(kacyc,s,E)}$, where 
${\sf s =  -1/116*x^4+55/116*x^2-1/2*x-121/29}$ denotes a generator of
$\Gal(k_1^\acyc/k)$, one finds that $\varepsilon'=-\varepsilon^s$.
It is easy to verify that $\varepsilon_1^\capitul =\varepsilon^{1+2s}$, 
using an embedding $x \to \rho$, where $\rho$ is a root of $R^\acyc$ 
and the precision arbitrary large: 

\ft\begin{verbatim}
\p 200
{R^ac=x^6-66*x^4+1089*x^2+4768;rho=polroots(R^ac)[1];print("rho=",rho);
E=225/116*rho^5-343/116*rho^4-12375/116*rho^3+5815/116*rho^2+84291/58*rho+78267/29;
Es=225/116*rho^5+343/116*rho^4-12375/116*rho^3-5815/116*rho^2+84291/58*rho-78267/29;
epsilon=(1899-64*sqrt(-298))/(-1159/116*rho^5+6677/116*rho^4+39153/116*rho^3-
238533/116*rho^2+77179/58*rho-233879/29)^3;
e=E*Es^2;print("e=",e);print("epsilon=",epsilon)}

rho=-5.9728134065...4562843417-0.9441927751...0618111082*I
e=
7.270516706789487836656178009435402414333453130182776091448039916032628461513661700
933698392988290285146819638483366394260981671112379267906499128649552172074837697810
4617926445495125962790428943177430 E-12
-1.0277841502329176112109162259015562800933793307507503826689008917123211914048449964
8650585897853522364539655813108248817044645543907234059344713003099779746659605382060
84979908985917818027637154804682 E-11*I
epsilon=
7.270516706789487836656178009435402414333453130182776091448039916032628461513661700
933698392988290285146819638483366394260981671112379267906499128649552172074837697810
4617926445495125962790428943177429 E-12
-1.0277841502329176112109162259015562800933793307507503826689008917123211914048449964
8650585897853522364539655813108248817044645543907234059344713003099779746659605382060
84979908985917818027637154804681 E-11*I
\end{verbatim}\ns

The two numbers differ at the last digit of the real and imaginary parts; 
so, it is clear that the unit, given by the formula \eqref{E}, is the unit 
$\varepsilon_1^\capitul$, and the relations of conjugation illustrate Theorem \ref{degreecap}.

\section{Appendix -- Algebraic complements}
Our main tool to give algebraic results specifying the arithmetic
framework and also to find again some results of the literature, often with
simplification of the proofs (e.g., Theorem \ref{capitule2} and 
\cite[Theorem 5.1]{Gra2024b}), is the following one that may 
be considered as a higer Chevalley--Herbrand formula:

\subsection{Filtrations of the class groups}\label{filtrations}
We know that the original Chevalley--Herbrand formula, given in 
\cite[pp.\,402-406]{Che1933} and more generally the properties of 
the filtration attached to the $p$-class group of a cyclic $p$-extension, 
give interesting information which depends a lot on the decomposition 
of $p$ in $k$; in the following definition, base fields and extensions are arbitrary:

\begin{definition}\label{filtre}
Let $K/k$ be a cyclic $p$-extension or a $\Z_p$-extension, let $G_n$
be the Galois group of the layer $K_n$ of degree $p^n$, $n \geq 0$,
over $k$. The canonical filtration of $\CH_{K_n}$ is defined by
$\CH_{K_n}^0 :=1$, $\CH_{K_n}^1 := \CH_{K_n}^{G_n}$, and
$\CH_{K_n}^{i+1}/\CH_{K_n}^i := \big(\CH_{K_n}/\CH_{K_n}^i\big)^{G_n}$, 
$i \geq 0$, whose order is:
$$\order(\CH_{K_n}^{i+1}/\CH_{K_n}^i) = \frac{\order \BN_{{K_n}/k}(\order \CH_{K_n})}
{\order \BN_{{K_n}/k}(\CH_{K_n}^i)} \times \frac{\order \Omega_{K_n/k}}
{\order \omega_{K_n/k} (\Lbda_k^i)}, \ \, \Lbda_k^i := 
\{a \in k^\times, \, (a) \in \BN_{{K_n}/k}(\CI_{K_n}^i)\}, $$
where $\Ccl_n(\CI_{K_n}^i) = \CH_{K_n}^i$ for a module $\CI_{K_n}^i$ 
of prime-to-$p$ ideals of $K_n$; the second factor is defined
in terms of genus theory as follows: let $G_{n, v}$ be 
the inertia groups of the places $v$ ramified in $K_n/k$, and:
$\Omega_{K_n/k} := \big\{(s_v^{}) \in 
\prd{}_v \, G_{n,v},\  \prd{}_v \,s_v^{} = 1 \big\}$; 
we consider the map $\omega_{K_n/k} : k^\times \to 
\Omega_{K_n/k}$, defined on the set of elements $a \in k^\times$, 
assumed to be local norm in $K_n/k$ for all unramified place, by
$\omega_{K_n/k}(a) = \big( \big(\frac{a\,,\,{K_n}/k}{v} \big)
\big)_{\! v} \in \Omega_{K_n/k}$, since these symbols fulfill 
the product formula of class field theory.
If $K/k$ is a $\Z_p$-extension, the map $\omega_{K_n/k}$ is defined 
by $\omega_{K_n/k}(a) = \big( \big(\frac{a\,,\,{K_n}/k}{{\mathfrak p}} \big)
\big)_{\!{\mathfrak p} \mid p}$.
\end{definition}

Typically, to prove that $\lambda_p(K/k) 
= \mu_p(K/k) = 0$, one tries to show that $\order \CH_{K_n}$ is bounded
regarding $n$. To prove that $\lambda_p(K/k)$ may have a standard value
(e.g., $\lambda_p(k^\acyc/k)=1$ in the split case of $p$ in $k=\Q(\sqrt {-m})$),
one hopes that this comes from the subgroup of invariant classes.
An abundant literature shows that most of the techniques used are equivalent 
to purely algebraic considerations on the previous formulas, adding strong 
assumptions. We do not intend to describe them, which can be found, among 
others, in \cite{Gree1976,Ger1977,San1993,HW2018,Jau2019,Gra2022,
KW2024a} and especially in their bibliographies. 

In general, context and assumptions yield trivial norm factors 
$\ds \frac{\order\Omega_{K_n/k}}{\order\omega_{K_n/k} (\Lbda_k^i)}$;
it is the case where one assumes that $p$ does not split in $k$, which
gives $\Omega_{K_n/k} = 1$. This removes most of the arithmetic part 
containing random $p$-adic aspects that we have analyzed in \cite{Gra2021}, 
it simplifies some capitulation phenomena (Theorem \ref{lambdamu}, 
\cite{Jau2019}), etc.

\subsection{Group structure of \texorpdfstring{$\CH_{K}$}{Lg} in a
degree-\texorpdfstring{$p$}{Lg} cyclic extension \texorpdfstring{$K/k$}{Lg} }

Kundu--Washington, in \cite[Theorem 2.1]{KW2024a}, determine 
the possible structures of $p$-class groups, in a more 
general setting that for the first layer of a $\Z_p$-extension of~$k$; but
to simplify we restrict ourselves to the case we are studying:

\medskip\noindent
{\bf Theorem} (Kundu--Washington (2024)).
Assume $p \ne 2$ non-split in the imaginary quadratic field $k$, and let 
$K$ be a totally ramified $\Z_p$-extension of $k$; let $K_1$ be the first 
layer of $K/k$. We assume that $\CH_k$ is cyclic of order $p^e$, 
$e \geq 1$, without partial capitulation in $K_1$ (i.e., $\BJ_{K_1/k}$ injective).
Under these assumptions, $\CH_{K_1}$ is one of the following groups:
\begin{equation*}
\left\{ \begin{aligned}
(\Z/p^e\Z) ^p,\ & \\
(\Z/p^{e-1}\Z) &\!\times\! (\Z/p^{s+1}\Z)^{a} \!\times\! (\Z/p^s\Z)^{p-1-a},\,
1 \leq e \le s ,\ 1\le a\le p-1, \\
(\Z/p^{e+1}\Z) & \!\times\!  (\Z/p^{s+1}\Z)^{b} \!\times\!  (\Z/p^s\Z)^{p-1-b}, \, 
0 \le s < e,\  0\le b\le p-2 \  [b\ne p-2 \text{ if } e=s+1].
\end{aligned}\right.
\end{equation*}

We had proven similar algebraic results, in the case $\CH_k$ cyclic 
of order $p$ (i.e., $e=1$),  without some assumptions of the 
above theorem \cite[Theorems 4.1, 4.3]{Gra2017}. In particular, 
no hypothesis is done on the base field $k$, nor in the splitting of 
$p$ in $k$. However, we assume the strong hypothesis that the 
group of invariant classes in $K_1/k$ is of order $p$;
since Chevalley--Herbrand formula is $\order \CH_{K_1}^{G_1} 
= \order \CH_k \times {\mathcal N}$, where the ``norm factor'' 
${\mathcal N}$ is an integer if $K/k$ is totally ramified, 
this is equivalent to have a trivial norm factor since $\order \CH_k = p$; 
this implies some local norm conditions on units of $k$, which 
applies, for usual class groups, only for suitable base fields when
$p$ does not split. 
But one may replace the $p$-class groups by $S$-class groups for any 
set of places $S$ for which the filtrations (Definition \ref{filtre}) follow 
analogous formulas with $S$-units instead of units; many generalizations 
are available (see \cite[Theorem 3.6 \& Corollaries]{Gra2017}).

\smallskip
When there is only partial capitulation (or non-capitulation as
in \cite{KW2024a}), we had obtained the following general result, for the  
filtration $(\CM_{K_1}^i)_i$ of finite $\Z_p[G_1]$-modules $\CM_{K_1}$
constituting an ``arithmetic family'' in the meaning of \cite{Gra2023}:

\begin{theorem}\label{classification}
Let $K/k$ be a cyclic extension of degree $p$, where $k$ is
arbitrary as well as the decomposition of $p$ in $k/\Q$.
Assume the condition $\order \CM_K^1 = p$. 
Let ${\rm maxi}$ be the least integer $i$ of the filtration
such that $\CM_K^i=\CM_K$. 

\medskip
{\bf (a)} Case $\BNu_{K/k}(\CM_K) \ne 1$ (i.e., $\BJ_{K/k}(\CM_k) \ne 1$, 
equivalent to $\BJ_{K/k}$ injective since $\order \CM_K^1 = p$).
Put ${\rm maxi} = a \cdot (p-1)+ b$, with $a \geq 0$ and $0 \leq b \leq p-2$.
Then we have necessarily ${\rm maxi} \geq 2$ and the following possibilities, 
all giving $\order \CM_K = p^{\rm maxi}$:

\smallskip
\quad (i) Case ${\rm maxi} < p$. Then $\CM_K \simeq \big( \Z/ p^{2} \Z \big) 
\oplus \big( \Z/ p \Z \big)^{{\rm maxi}-2}$.

\smallskip
\quad(ii) Case ${\rm maxi} = p$. Then $\CM_K \simeq \big( \Z/ p \Z \big)^p$ 
or $\big( \Z/ p^{2} \Z \big) \oplus \big( \Z/ p \Z \big)^{p-2}$.

\smallskip
\quad(iii) Case ${\rm maxi} > p$. Then $\CM_K \simeq \big( \Z/ p^{a+1} \Z \big)^b 
\oplus \big( \Z/ p^{a} \Z \big)^{p-1-b}$.

\medskip
{\bf (b)} Case $\BNu_{K/k}(\CM_K) = 1$ (complete capitulation of $\CM_k$ in $K_1$).
Then there exist integers $m_j$, $1 \leq m_1 \leq \cdots \leq m_t$, such that
$\CM_K \simeq \bigoplus_{j=1}^t \big[(\Z/p^{a_j+1}\Z)^{b_j} \,
\oplus\, (\Z/p^{a_j}\Z)^{p-1-b_j}\big]$,  
where $m_j=a_j\,(p-1)+b_j$, $a_j \geq 0$ and $0 \leq b_j \leq p-2$.
\end{theorem}

\begin{remark}
Note that for $\CM_K = \CH_K$ with $e=1$, then
$\BJ_{K/k}$ injective and $\order \CH_K^1 = \order \CH_k$ imply
$\CH_K^\ram \subseteq \BJ_{K/k} (\CH_k)$. So the assumptions in the
two approaches are similar for $e=1$.

\smallskip\noindent
This classification in the case $e=1$ shows that a ``smooth'' complexity 
of $\CH_K$ holds, roughly speaking, for small ${\rm maxi}$'s; so, in a statistic 
point of view, probabilities are decreasing regarding $m$. It seems 
reasonable to say that assumptions such as $\order \CH_k = p^e$ and the
relations $\order(\CH_K^{i+1}/\CH_K^i) = \ffrac{p^e}{\order \BN_{K/k}(\CH_K^i)}$, 
when norm factors are trivial, gives probabilities $\frac{1}{p}$ that 
$\CH_K^{i+1} = \CH_K^i$ under the condition $\order \CH_K^i = 
p^{e_i}$, $e_i \le e$. We refer to the algorithm, allowing to switch 
from the step $i$ to the step $i+1$, that we have described in many papers.
\end{remark}

The following result was proved in \cite[Theorem 6.1]{KW2023}; then we have 
obtained the following similar statement for $k^\acyc$, of which we have given 
a simpler proof using a specific property of a metabelian framework (here dihedral).

\begin{proposition} \label{noncyclic} 
\cite[Theorem 5.6]{Gra2024b}.
We assume that $\CH_k$ is cyclic non-trivial and that the Hilbert class 
field $H_k^\nr$ is not contained in $k^\acyc$. Put $k^\acyc \cap H_k^\nr 
= k^\acyc_e$, $e \geq 0$. Then, for all $n \geq e+1$, the $p$-class group 
of $k^\acyc_n$ is non-cyclic. So, if $e=0$, we have necessarily $n\geq 3$ 
in the above classification \ref{classification}, and only the two possibilities 
(ii) and (iii) for the structure of the $p$-class group of $k_1^\acyc$.
\end{proposition}

\begin{remark}
In fact, this dihedral context has been studied extensively from \cite{Sch1933},
\cite{Mar1969}, \cite{BC1971}, \cite{Mos1974}, \cite{HKM1978} and others); 
then one has the $p$-class formula (see \cite[Formula 13]{HKM1978}, 
\cite[Theorem 10]{Jau1981} in a whole metabelian study, \cite[Corollary 4.2]
{CN2020} in a similar way, \cite[Theorem 2.2]{Lem2005} giving moreover
a view using reflection theorems for relations between $p$-ranks of 
class groups, then \cite{Hub2008}). Let $Q$ be $\Q$  or an imaginary 
quadratic field; for $p=3$, the result is:
\begin{equation}\label{dihedral}
\order \BH_{k_1^\acyc} = {\bf a} \times \frac{\order \BH_k \times (\order \BH_L)^2}
{(\order \BH_Q)^2} \times \frac{1}{3^{{\bf b}(d)}} = {\bf a} \times 
\frac{\order \BH_k \times (\order \BH_L)^2} {3}, \ \, {\bf a} \in \{1, 3\}, 
\end{equation}

\noindent
where $L $ is one of the non-Galois cubic subfield of $k_1^\acyc$, ${\bf a}$ 
is a unit index equal to $1$ or~$3$, ${\bf b}(d) = \ffrac{d^2+2d-4}{4} = 1$ for 
$d= [k : \Q] = 2$ in the imaginary case. Moreover, ${\bf a}$ may be computed, 
by means of the {\sc pari/gp} formula ${\sf a=3 *3^{-lift(Mod(valuation(kacyc.no/k.no,3),2))}}$ 
(in the four programs, the output of ${\sf a}$ is given with that of ${\sf Val}$).
For instance, some cases of the structure do not hold if the relation 
modulo $2$ on the exponents is not fulfilled.
\end{remark}

\end{appendices}

\tableofcontents

\end{document}